\def\DefineSymbol#1#2{\newcommand{#1}{{\mathrm {#2}}}}
\def\DefineCategory#1#2{\newcommand{#1}{{\mathbf {#2}}}}
\theoremstyle{plain}
    \newtheorem{theorem}{Theorem}[section]
    \newtheorem{lemma}[theorem]{Lemma}
    \newtheorem{proposition}[theorem]{Proposition}
    \newtheorem{fact}[theorem]{Fact}
\theoremstyle{definition}
    \newtheorem{definition}[theorem]{Definition}
    \newtheorem{construction}[theorem]{Construction}
\theoremstyle{remark}
    \newtheorem{remark}[theorem]{Remark}
    \newtheorem{example}[theorem]{Example}
    \numberwithin{equation}{section}
\DefineSymbol{\id}{id}
\DefineSymbol{\proj}{proj}
\DefineSymbol{\incl}{incl}
\DefineSymbol{\const}{const}
\DefineSymbol{\op}{op}
\DefineSymbol{\res}{res}
\DeclareMathOperator{\rank}{rank}
\DeclareMathOperator{\ind}{ind}
\DeclareMathOperator{\Hom}{Hom}
\DeclareMathOperator{\End}{End}
\DeclareMathOperator{\GL}{GL}
\DeclareMathOperator{\Coker}{Coker}
\let\Im\relax
\DeclareMathOperator{\Im}{Im}
\DeclareMathOperator{\Herm}{Herm}
\let\sf\relax
\DeclareMathOperator{\sf}{sf}
\DefineCategory{\Set}{Set}
\DefineCategory{\Ab}{Ab}
\DefineCategory{\Mod}{Mod}
\DefineCategory{\Alg}{Alg}
\newcommand{\Abb}{\mathbb{A}}
\newcommand{\Cbb}{\mathbb{C}}
\newcommand{\Rbb}{\mathbb{R}}
\newcommand{\Zbb}{\mathbb{Z}}
\newcommand{\Ecal}{\mathcal{E}}
\newcommand{\Fcal}{\mathcal{F}}
\newcommand{\Kcal}{\mathcal{K}}
\newcommand{\Mcal}{\mathcal{M}}
\newcommand{\Ocal}{\mathcal{O}}
\newcommand{\Scal}{\mathcal{S}}
\newcommand{\Tcal}{\mathcal{T}}
\newcommand{\Zcal}{\mathcal{Z}}
\newcommand{\Qfrak}{\mathfrak{Q}}
\renewcommand{\tilde}{\widetilde}
\renewcommand{\hat}{\widehat}
\renewcommand{\bar}{\overline}
\newcommand{\setmid}{\mathrel{}\middle|\mathrel{}}
\newcommand{\rest}[2]{\left.#1\right|_{#2}}
\newcommand{\abs}[1]{\lvert #1 \rvert}
\newcommand{\norm}[1]{\lVert #1 \rVert}
\newcommand{\Spec}[0]{\mathrm{Spec}}
\newcommand{\Conf}[0]{\mathrm{Conf}}
\newcommand{\conf}[0]{\mathrm{config}}
\newcommand{\Poly}[0]{\mathrm{Poly}}
\newcommand{\disk}[0]{{B^2}}
\newcommand{\mobius}[0]{\Phi}
\newcommand{\bundle}{\mathrm{Bun}}
\begin{document}

\begin{abstract}
    In this paper, we give alternative proofs of Bott periodicity of $ K $-theory and the bulk-edge correspondence of integer quantum Hall effect. 
    Regarding Bott periodicity, we connect its proof with configuration spaces and use Quot schemes in algebraic geometry in our proof. 
    Regarding the bulk-edge correspondence, we formulate edge indices based on the consideration of Graf--Porta and give a more elementary and self-contained proof. 
\end{abstract}

\title[Bott periodicity via Quot schemes and bulk-edge correspondence]{A proof of Bott periodicity via Quot schemes and bulk-edge correspondence}
\author{Masaki Natori}
\date{}
\address{Graduate School of Mathematical Sciences, University of Tokyo, 3-8-1 Komaba,
Tokyo, 153-8914, Japan.}
\email{{\tt natori-masaki616@g.ecc.u-tokyo.ac.jp}}
\maketitle

\setcounter{tocdepth}{1}
\tableofcontents

\section{Introduction}\label{sec:intro}

\subsection{Overview}
In this paper, we give alternative proofs of Bott periodicity of $ K $-theory and the bulk-edge correspondence of integer quantum Hall effect. 

A feature of our proof of the Bott periodicity theorem (\cref{sec:strategy}, \ref{sec:construction}, \ref{sec:continuity} and \ref{sec:proof-of-bott-periodicity}) is that we use spaces of configurations labeled by vector spaces on $ B^2 \coloneqq \{\abs{z} < 1\} $. 
G. B. Segal reveals that connective $ K $-homology theory is represented by spaces of configurations labeled by vector spaces \cite{MR0515311}. 
Our proof is based on this point of view. 
Furthermore, this view gives an idea of our proof of the bulk-edge correspondence (\cref{sec:setting} and \ref{sec:proof-of-bulkedge}).
The bulk indices are related to the construction of the inverse map of the Bott map due to M. F. Atiyah and R. Bott \cite{MR0178470}. 
On the other hand, the edge indices are related to the one using family indices due to Atiyah \cite{MR0228000}. 
Since our proof of Bott periodicity is related to both of the constructions, we can rewrite the bulk and edge indices in our view.
As a result, we obtain an elementary and self-contained proof of the bulk-edge correspondence. 

To prove Bott periodicity, it is enough to show that the Bott map $ \beta_X \colon K(X) \to K(X \times (D^2, S^1)) $ has its inverse map for each compact Hausdorff spaces $ X $. 
To do so, we need to construct vector bundles over $ X $ from elements in $ K(X \times (D^2, S^1)) $, and as a preliminary step we construct families of configurations labeled by vector spaces on $ B^2 $.
In concrete, we will show the following property of the configuration map $ \conf_{\disk} $:
\begin{theorem}[see \cref{thm:coker-conti2}]
    The configuration map 
    \[
        \conf_{\disk} \colon \Poly_{V, S^1}^{l, r} \to \Conf_{V, \disk}^r
    \]
    is continuous. 
\end{theorem}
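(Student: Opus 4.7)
The plan is to prove continuity pointwise at a fixed polynomial $P$, using that the labels assigned to the roots of $P$ can be described via contour integrals of holomorphic functions of $P(z)^{-1}$ and therefore vary continuously with $P$.

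Let $P \in \Poly_{V, S^1}^{l, r}$ have roots $z_1, \dots, z_k$ inside $\disk$, each carrying a label $V_j \subset V$ given by the local contribution to $V[z]/(P)$ at $z_j$. Since $P$ is invertible on $S^1$, choose pairwise disjoint closed disks $\bar U_j \Subset \disk$ centered at $z_j$ with $P$ invertible on each $\partial U_j$. A Rouch\'e-type argument, applied to $\det P$ on each $\partial U_j$ and on a loop inside $\disk$ close to $S^1$, yields an open neighborhood $\Ncal$ of $P$ in $\Poly_{V, S^1}^{l, r}$ on which every $Q$ remains invertible on $S^1$ and on each $\partial U_j$, and whose roots in $\disk$ lie entirely in $\bigcup_j U_j$ with the total algebraic multiplicity inside each $U_j$ preserved.

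For $Q \in \Ncal$, Cramer's rule expresses $Q(z)^{-1}$ as a rational function of the coefficients of $Q$, holomorphic and jointly continuous in $(z, Q)$ on a neighborhood of $\partial U_j \times \{P\}$. Hence any quantity of the form $\frac{1}{2\pi i}\oint_{\partial U_j} (\,\cdots\,)\, dz$ built from $Q(z)^{-1}$ is continuous in $Q$. In particular the Riesz-type projection that extracts the local piece $W_j(Q) \subset V$ of $V[z]/(Q)$ supported on the roots in $U_j$ is a continuous function of $Q \in \Ncal$, with $W_j(P) = V_j$. Because $\dim W_j(Q)$ equals the preserved algebraic multiplicity in $U_j$ and is therefore locally constant, $Q \mapsto W_j(Q)$ is a continuous map into the appropriate Grassmannian. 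Similarly, the unordered multiset of roots of $Q$ in $U_j$ depends continuously on $Q$, for instance via contour-integral expressions for the power sums of those roots.

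Composing these two pieces with the insertion map into $\Conf_{V, \disk}^r$ --- which records the roots in $U_j$ together with the distribution of $W_j(Q)$ among them --- produces $\conf_{\disk}|_\Ncal$. The main obstacle is the continuity of this last insertion at a polynomial $P$ with higher multiplicity roots, where the individual labels attached to the separating roots of a perturbed $Q$ may jump discontinuously even though the sum $W_j(Q)$ varies continuously. The convention defining $\Conf_{V, \disk}^r$, under which labels of colliding points are summed, is precisely what makes the insertion continuous: once this convention is spelled out as a characterisation of the topology in terms of sums over small open neighborhoods of each $z_j$, the continuity of $\conf_{\disk}$ at $P$ reduces to the continuity of $Q \mapsto W_j(Q)$ and of the root multisets, both of which have been established.
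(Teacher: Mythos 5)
Your analytic toolkit (Rouch\'e localization of the roots, contour integrals of $Q(z)^{-1}$, Riesz-type projections) is the right one for part of this theorem, but the proposal has a genuine gap: it proves continuity into the wrong space. In this paper $\Conf_{V,\disk}^r$ is not a Segal-type space of points labeled by subspaces of $V$ with a ``sum on collision'' convention; it is the quotient $\GL_r(\Cbb)\backslash\widetilde{\Conf}_{V,\disk}^r$ of pairs $(A,\iota)$, i.e.\ the analytified Quot scheme. A point of it remembers the full $\Cbb[z]$-module structure of $H^0(\Coker\tilde{f})$ together with the map induced from $V$, not merely the root multiset of $\det f$ and a ``total local label'' near each cluster. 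These coarser invariants do not separate points: for $V=\Cbb^2$ and $r=2$, the classes $[0,\id]$ and $\bigl[\bigl(\begin{smallmatrix}0&0\\1&0\end{smallmatrix}\bigr),(e_1\mid 0)\bigr]$ both have support multiset $\{0,0\}$ and two-dimensional total space, yet are distinct in $\Conf_{\Cbb^2,\disk}^2$. So your proposed ``characterisation of the topology in terms of sums over small open neighborhoods'' cannot characterize the topology of the target, and continuity of the root multisets together with the clusterwise pieces does not yield continuity of $\conf_\disk$. (A smaller but symptomatic error: $W_j(Q)$ is not a subspace of $V$; the local contribution of $V[z]/(Q)$ at a cluster has dimension equal to the local multiplicity of $\det Q$ there, which can exceed $\dim V$.)

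What must actually be produced, and what the proposal never constructs, is a continuous local choice of representative $(A(Q),\iota(Q))\in M_r(\Cbb)\times\Hom_\Cbb(V,\Cbb^r)$, i.e.\ a local trivialization of the family of cokernels $Q\mapsto H^0(\Coker\tilde{Q})$ compatible with the $z$-action and with the map from constants. The paper does this in two steps: first, continuity of $\conf_{\Cbb}\colon\Poly_V^{l,inv}\to\Conf_{V,\Cbb}^{dl}$ is obtained by exhibiting $\Coker\tilde{F}$ over $\Spec P_d^l$ as a flat family with locally free pushforward, hence a morphism to the Quot scheme, then analytifying and gluing over $\Cbb P^1$ (\cref{thm:coker-conti1}, \cref{lem:gluing}); second, the passage to $D_\gamma$ is the separately proved continuity of $\res_{D_\gamma}$ (\cref{lem:proj-conti}), and it is there --- with the Riesz projection applied to the universal operator $A$, not to $Q(z)^{-1}$ --- that your contour integrals legitimately enter. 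If you want a purely analytic substitute for the first step, note that for $Q\in\Poly_V^{l,inv}$ division with remainder identifies $H^0(\Coker\tilde{Q})$ with $\bigoplus_{i<l}z^iV$ by formulas polynomial in the coefficients of $Q$ and the inverse of its leading coefficient, giving explicit continuous expressions for $A(Q)$ and $\iota(Q)$; the local-cluster assembly you describe cannot be completed as stated.
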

\noindent
Here, $ \Poly_{V, S^1}^{l, r} $ is a space of matrix-coefficient polynomials that are invertible over $ S^1 $. 
It will be defined in \cref{sec:continuity}. 
Also, $ \Conf_{V, \disk}^r $ is a configuration space labeled by vector spaces over $ \disk $. 
It will be defined in \cref{sec:construction}. 
This map roughly corresponds to the homotopy inverse $ \Omega U \to \Zbb \times BU $ of the Bott map $ \Zbb \times BU \to \Omega U $.
To show this continuity, we take a procedure of constructing a morphism of schemes using Quot schemes in algebraic geometry and obtaining the configuration map as its analytification.

\subsection{On the Bott periodicity theorem}

Various methods to prove Bott periodicity are known.
It was first established by Bott in 1959 using Morse theory \cite{MR0110104}. 
Subsequently, an elementary proof was given by Atiyah--Bott \cite{MR0178470}, and a proof using family indices was given by Atiyah \cite{MR0228000}.
There is also a proof using Kuiper's theorem by Atiyah and I. M. Singer \cite{MR0285033}, a proof using quasifibration by D. MacDuff \cite{MR0467734}, a concise proof using spectral decomposition of unitary matrices and group completion theorem by B. Harris \cite{MR0563240}, and so on. 

We compare our proof with the proof by Atiyah--Bott \cite{MR0178470} and the proof by Atiyah \cite{MR0228000}. 
In the construction by Atiyah--Bott, after they approximating a clutching function of a vector bundle over $ S^2 $ by a Laurent polynomial, it is transformed into a polynomial, then into a linear polynomial, and finally into a vector bundle by taking the Bloch bundle.
In contrast, our proof has a feature that we directly obtain a vector bundle from the polynomial. 
Restricting the case of linear polynomials, two constructions are equivalent. 
The construction by Atiyah takes the Toeplitz operator of a clutching function and obtains an element of $ K $-group as its family index. 
In \cite{MR0228000}, Atiyah compares Atiyah--Bott's proof with Atiyah's proof. 
There, a construction in which the polynomial is regarded as a $ \Cbb[z] $-homomorphism and its cokernel is taken is mentioned as a construction that is intermediate between the two constructions.
We reconsider the construction as sheaf homomorphisms, and as mentioned in the beginning, we clarify that there is a step to obtain families of configurations labeled by vector spaces from polynomials as a preliminary step of the operation to obtain vector bundles from polynomials.

We describe the idea of our proof of Bott periodicity in three stages.
The first and second ideas are not actually implemented.
The third idea is the strategy used in the actual proof.
We explain the first idea. 
For the proof, we only need to construct the homotopy inverse $ \alpha \colon \Omega U \to \Zbb \times BU $ of the Bott map $ \beta \colon \Zbb \times BU \to \Omega U $.
By spectral decomposition, an element of $ U(n) $ can be regarded as a configuration on $ S^1 $ labeled by vector spaces. 
Thus, an element of $ \Omega U $ is an $ S^1 $-family of configurations on $ S^1 $ labeled by vector spaces. 
Roughly speaking, the map $ \alpha \colon \Omega U \to \Zbb \times BU $ can be considered as a map returning signed vector spaces obtained as its "winding number". 
This is the first idea.
In the second idea, we define this winding number in a specific case. 
That is, let $ V $ be a finite-dimensional complex vector space and consider the case where $ f(z) \colon S^1 \to \GL_\Cbb(V) $ is given in the form of a polynomial
\[
    f(z) = \sum_{j = 0}^l a_j z^j \quad \text{($ \forall j $, $ a_j \in \End_\Cbb(V) $)}.
\] 
Then, by generalized eigendecomposition of $ f(z) \in \End_\Cbb(V) $ obtained for each $ z \in \Cbb $, we get a spectral curve on $ \Cbb \times \Cbb $ labeled by generalized eigenspaces. 
Restricting this to $ \disk \times \{0\} $, we obtain an configuration on $ \disk $.
When restricting, we will count the vector spaces of labels as many times as the order at each zero. 
Finally, taking the direct sum of all labels on the configuration, we obtain the desired vector space. 
This is the second idea. 
There is a possibility that this is rigorously formulated using the moduli stack of coherent sheaves on $ \Abb_{\Cbb}^2 $. 
However, we will not take that strategy this time.
Instead, as the third idea, to define configurations on $ \disk $ obtained as a restriction to $ \disk \times \{0\} $ rigorously, we will consider an operation of taking cokernels of sheaf homomorphisms. 
In this paper, we will do this for families and construct a configuration map $ \conf_{\disk} \colon \Poly_{V, S^1}^{l, r} \to \Conf_{V, \disk}^r $ which obtain configurations on $ \disk $ from matrix-coefficient polynomials $ f(z) $.

\subsection{On the bulk-edge correspondence}

The integer quantum Hall effect was first experimentally discovered by K. v. Klitzing, G. Dorda and M. Pepper in 1980 \cite{PhysRevLett.45.494}. 
The bulk indices was introduced by D. J. Thouless, M. Kohmoto, M. P. Nightingale and M. den Nijs as a first Chern number of a Bloch bundle (called TKNN number) \cite{PhysRevLett.49.405}. 
The bulk-edge correspondence of integer quantum Hall effect was first shown by Y. Hatsugai in 1993 using Riemann surfaces \cite{PhysRevLett.71.3697}. 
Later, a mathematical proof was given by J. Kellendonk, T. Richter and H. Schulz-Baldes using $ K $-theory of $ C^* $-algebras \cite{MR1877916}. 
G. M. Graf and M. Porta introduced some different vector bundles to represent edge indices \cite{MR3123539}. 
In other words, they constructed vector bundles using solutions of Hamiltonians decaying in a particular direction and associated their indices with edge indices.
S. Hayashi considered Graf--Porta's vector bundles in terms of $ K $-theory and index theory \cite{MR3720513}. 
Furthermore, Hayashi showed the coincidence between the indices and the edge indices and the coincidence between the indices and the bulk indices using the cobordism invariance of indices and finally proved the bulk-edge correspondence.

In contrast, in this paper, the indices of the Graf--Porta vector bundle are formulated as the family indices of Fredholm operators.
In this paper, we will call that index the edge index.
The coincidence between the edge index and the bulk index is then shown in a more elementary and self-contained manner.
Using this formulation, we also define edge indices for Hamiltonians that are not necessarily self-adjoint and prove the bulk-edge correspondence.

We explain the idea of the proof of the bulk-edge correspondence.
Let $ X $ be an oriented smooth $ n $-dimensional closed manifold, $ \gamma $ a simple closed curve in complex plane $ \Cbb $ and $ V $ a finite-dimensional complex vector space. 
Let $ D_{\gamma} $ be the bounded one of the two connected components of $ \Cbb \setminus \gamma $.
Suppose a continuous map
\[
    H \colon X \times S^1 \times \gamma \to \GL_\Cbb(V)
\]
is given in the form
\[
    H(x, z, \lambda) = \sum_{j = -k}^l a_j(x) z^j - \lambda. 
\]
Here, for each $ 0 \leq j \leq l $, $ a_j \colon X \to \End_\Cbb(V) $ is a continuous map. 
We can regard $ H $ as a multivalued function labeled by vector spaces from $ X \times S^1 \times \bar{D_\gamma} $ to $ \Cbb $ by generalized eigendecomposition. 
The bulk indices are indices obtained from the intersection between its graph and $ X \times S^1 \times D_\gamma \times \{0\} $. 
On the other hand, when $ z^k H $ is regarded as a multivalued function labeled by vector spaces from $ X \times D^2 \times \gamma $ to $ \Cbb $ in the same way, the edge indices are indices obtained from the intersection between its graph and $ X \times \disk \times \gamma \times \{0\} $. 
Gluing these multivalued functions, we regard it as a multivalued function from $ X \times S^1 \times \bar{D_\gamma} \cup_{X \times S^1 \times \gamma} X \times D^2 \times \gamma \approx X \times S^3 $ to $ \Cbb $. 
Then, the index obtained from the intersection between its graph and $ X \times S^3 \times \{0\} $ is expected to be 0 since $ X \times S^3 $ is a closed manifold. 
This is an idea of the proof of the bulk-edge correspondence. 
We will describe a low-dimensional version of this idea in \cref{sec:toymodel}. 
Furthermore, although multivalued functions do not appear explicitly, the proof of the bulk-edge correspondence based on this idea is given in \cref{sec:setting} and \ref{sec:proof-of-bulkedge}.

Our proof of Bott periodicity can be extended to the case of $ KR $-theory \cite{MR0206940} and the bulk-edge correspondence can also be extended to the case of Real spaces. 

This paper is divided into two parts: 
a part giving an alternative proof of Bott periodicity (\cref{part:bott-periodicity}) and a part giving an alternative proof of the bulk-edge correspondence (\cref{part:bulkedge}).
\cref{part:bott-periodicity} does not depend on \cref{part:bulkedge}. 
The only place where \cref{part:bulkedge} depends on \cref{part:bott-periodicity} is in citing \cref{sec:relation}.

\subsection*{Acknowledgment}

I am deeply grateful to my supervisor Mikio Furuta for his enormous support and helpful advice. 
He checked the draft and gave me useful comments. 
I would also like to thank Tasuki Kinjo and Yugo Takanashi. 
They gave me a lot of helpful comments in algebraic geometry. 
I would also like to thank Nobuo Iida and Jin Miyazawa for checking the draft and continuous encourage. 
This work was supported by JSPS KAKENHI Grant Number 23KJ0402 and Forefront Physics and Mathematics Program to Drive Transformation.

\part{An alternative proof of the Bott periodicity theorem}\label{part:bott-periodicity}

\section{The statement and strategy of the proof}\label{sec:strategy}

In this section, we will describe the statement of the Bott periodicity theorem and the strategy of the proof in \cref{sec:construction}, \ref{sec:continuity} and \ref{sec:proof-of-bott-periodicity}. 
At first, we will review the definition of $ K $-group. 

\begin{definition}
    Let $ X $ be a compact Hausdorff space and $ A $ a closed subspace of $ X $. 
    Let $ \Kcal(X, A) $ denote the abelian monoid of isomorphism classes of triples $ (E, F, \alpha) $ which consists of vector bundles $ E $, $ F $ over $ X $ and a bundle isomorphism $ \alpha \colon \rest{E}{A} \to \rest{F}{A} $. 
    Then the relative $ K $-group $ K(X, A) $ of $ (X, A) $ is defined by 
    \[
        K(X, A) \coloneqq \Kcal(X, A) / {\sim}. 
    \]
    Here $ \sim $ stands for an equivalent relation on $ \Kcal(X, A) $ generated by following relations: 
    \begin{enumerate}
        \item $ (E, F, \alpha) \sim (E, F, \alpha') $ when $ \alpha $ and $ \alpha' $ are homotopic, preserving that they are isomorphic on $ A $ and
        \item $ (E, F, \alpha) \sim (E \oplus G, F \oplus G, \alpha \oplus \id) $ for any vector bundles $ G $ over $ X $. 
    \end{enumerate}
    We write $ [E, F, \alpha] $ for the element of $ K(X, A) $ which corresponds to a triple $ (E, F, \alpha) $. 
    Also, the $ K $-group of $ X $ is defined by $ K(X) \coloneqq K(X, \emptyset) $. 
    For a vector bundle $ E $ over $ X $, we write $ [E] \coloneqq [E, 0, \emptyset] \in K(X) $. 
\end{definition}

Then, the statement of the Bott periodicity theorem is as follows. 
We write $ D^2 \coloneqq \{z \in \Cbb \mid \abs{z} \leq 1\} $, $ \disk \coloneqq D^2 \setminus \partial D^2 $, $ S^1 \coloneqq \partial D^2 $. 

\begin{theorem}[\cite{MR0178470}, \cite{MR0228000}]\label{thm:bott-periodicity}
    Let $ X $ be a compact Hausdorff space. 
    Then, the Bott map
    \begin{align*}
        \beta_X \colon K(X) & \to K(X \times (D^2, S^1)) ; \\
        [E] & \mapsto [E] \boxtimes [\underline{\Cbb}, \underline{\Cbb}, z^{-1}] = [\proj_X^*E, \proj_X^*E, z^{-1}]
    \end{align*}
    is an isomorphism. 
    Here $ z $ is a coordinate of $ S^1 $, $ [\underline{\Cbb}, \underline{\Cbb}, z^{-1}] $ is an element of $ K(D^2, S^1) $, $ \boxtimes $ stands for the external tensor product. 
\end{theorem}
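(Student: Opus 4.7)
The plan is to construct the inverse map $\alpha_X \colon K(X \times (D^2, S^1)) \to K(X)$ of the Bott map using the configuration map $\conf_{\disk}$, and verify that it is a two-sided inverse. First, given a class in $K(X \times (D^2, S^1))$, I would reduce to a polynomial clutching situation: after adding a trivializable summand, every such class can be represented by $[\underline{V}, \underline{V}, f]$ with $V$ a finite-dimensional complex vector space and $f \colon X \times S^1 \to \GL_\Cbb(V)$ continuous. Approximating $f$ by a Laurent polynomial in $z$ and multiplying by a sufficiently high power of $z$, I may further assume $f(x, z) = \sum_{j=0}^{l} a_j(x) z^j$, i.e., a continuous family in $\Poly_{V, S^1}^{l, r}$ parametrized by $X$.

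Next, I would apply the configuration map $\conf_{\disk}$ to obtain a continuous $X$-family of configurations on $\disk$ labeled by vector spaces. Because the total dimension of the labels is locally constant on $\Conf_{V, \disk}^{r}$ (it equals $r$ by construction), taking the direct sum of all labels at each $x \in X$ assembles into a genuine vector bundle over $X$; define $\alpha_X([\underline{V}, \underline{V}, f])$ to be its class in $K(X)$, corrected by a suitable rank shift to account for the multiplication by $z^k$ used above. The continuity of $\conf_{\disk}$ (the theorem cited above from \cref{sec:continuity}) is what makes this assignment continuous in the bundle data.

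Then I would verify that $\alpha_X$ descends to a well-defined map of $K$-groups. This amounts to checking independence of the polynomial approximation, invariance under homotopies of clutching functions, and additivity under direct sums; each reduces, via the continuity theorem, to the observation that a homotopy of polynomial clutching functions through $\Poly_{V, S^1}^{l, r}$ produces a homotopy of configurations, hence the same $K$-class. Finally, to prove that $\alpha_X$ is a two-sided inverse of $\beta_X$, I would analyze both composites. For $\alpha_X \circ \beta_X$: applied to $[E]$ the Bott map produces the clutching polynomial $z \cdot \id_E$ (after the rank shift), whose generalized eigendecomposition gives a single configuration point at $z=0$ with label $E$, so the direct sum reproduces $E$. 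For $\beta_X \circ \alpha_X$: by a linearization step in the spirit of Atiyah--Bott, I may deform any $f \in \Poly_{V,S^1}^{l,r}$ through $\Poly^{\bullet, r}_{V, S^1}$ to one of the form $a_1(x) z - a_0(x)$, for which the configuration records an honest generalized eigendecomposition, and from the resulting vector bundle the Bott map rebuilds a clutching function homotopic to $f$.

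The main obstacle I anticipate is verifying homotopy invariance and well-definedness of $\alpha_X$: the configuration output is a priori only a continuous family of \emph{configurations with multiplicities}, and promoting the pointwise direct sum of labels to a vector bundle on $X$ hinges on constancy of the total dimension, which must be read off carefully from the structure of $\Poly_{V, S^1}^{l, r}$. Likewise, the computation of $\beta_X \circ \alpha_X$ is delicate because the linearization homotopy must take place inside the space of polynomials invertible on $S^1$, and the compatibility between the configuration picture and the standard Atiyah--Bott clutching-function reconstruction has to be tracked through the Quot-scheme construction underlying $\conf_{\disk}$.
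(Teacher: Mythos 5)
Your construction of $\alpha_X$ matches the paper's: reduce to a polynomial clutching family via Fourier/Ces\`aro approximation and multiplication by $z^k$, apply $\conf_{\disk}$, pull back the global section bundle $\Ecal_{V,\disk}^r$ to get $\bundle_X(E, z^kH)$, and set $\alpha_X = [E^{\oplus k}] - [\bundle_X(E, z^kH)]$, with the same well-definedness checks. Where you diverge is in proving that $\alpha_X$ inverts $\beta_X$. The paper never verifies $\beta_X \circ \alpha_X = \id$ directly: it invokes Atiyah's formal criterion from \cite{MR0228000}, under which it suffices to check that $\alpha_X$ is (A1) natural in $X$, (A2) a $K(X)$-module homomorphism, and (A3) normalized by $\alpha_X([\underline{\Cbb}, \underline{\Cbb}, z^{-1}]) = [\underline{\Cbb}]$. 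These three facts follow cheaply from \cref{lem:property-of-bundle}, and the hard composite is obtained for free from the formal structure. This is the main economy of the paper's argument, and it is worth knowing that the entire ``linearize and reconstruct the clutching function'' step can be bypassed.

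Your proposed direct verification of $\beta_X \circ \alpha_X = \id$ also contains a genuine gap as stated. You claim one may deform any $f \in \Poly_{V,S^1}^{l,r}$ through polynomials invertible on $S^1$ to a linear polynomial $a_1(x)z - a_0(x)$ on the \emph{same} $V$. This is false in general: a degree-$l$ polynomial family with $r$ interior roots cannot be linearized without enlarging the fiber, since a linear family on $V$ has at most $\dim V$ roots and, more fundamentally, the Atiyah--Bott linearization replaces $V$ by $V^{\oplus(l+1)}$ (the companion-matrix trick $L^l(p)$ acting on $E^{\oplus(l+1)}$), which changes the $K$-theory class by an explicit trivial correction that must be tracked. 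After linearization one still owes the argument that $\beta_X$ applied to the resulting Bloch bundle returns a clutching function homotopic to the original, which requires the decomposition of a linear clutching function into its $z$-expanding and $z$-contracting parts. None of this is fatal---it is essentially Atiyah--Bott's original proof, and \cref{subsec:atiyah-bott} of the paper shows the configuration construction agrees with the Bloch-bundle construction in the linear case---but it is substantially more work than the route the paper takes, and your sketch omits the stabilization that makes the linearization legitimate.
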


Any element of $ K(X \times (D^2, S^1)) $ can be written by $ [\proj_X^*E, \proj_X^*E, H] $ using a vector bundle $ E $ over $ X $ and a continuous family of Laurent polynomial $ \{H_x(z) = \sum_{j = - k}^l a_j(x) z^j\}_{x \in X} $ ($ \forall x \in X $, $ a_j(x) \in \End_{\Cbb}(E_x) $) (see \cref{sec:proof-of-bott-periodicity}). 
In this section, for simplicity, we will construct a vector bundle $ \bundle_X(V, H) $ over $ X $ which eventually satisfies $ \beta_X^{-1}([\underline{V}, \underline{V}, H]) = -[\bundle_X(V, H)] $ in the case that the vector bundle $ E $ is trivial and k = 0, namely the Laurent polynomial is a polynomial by using the results in the following sections. 

\begin{construction}\label{const:basic}
    Once again, the setup is described as follows. 
    Let $ V $ be a finite-dimensional vector space and $ l \geq 0 $. 
    Suppose for each $ 0 \leq j \leq l $, a continuous map $ a_j \colon X \to \End_{\Cbb}(V) $ is given. 
    Then, we obtain a continuous family of $ \End_{\Cbb}(V) $-coefficient polynomial
    \[
        \left\{H_x(z) \coloneqq \sum_{j = 0}^l a_j(x) z^j\right\}_{x \in X}. 
    \]
    For $ x \in X $, $ z \in S^1 $, we assume that $ H_x(z) \in \GL_\Cbb(V) $. 
    Also, we assume that the number of roots with multiplicity in $ \disk $ of $ \det(H_x) $ is constant $ r $, independent of $ x \in X $.

    The family of polynomials $ \{H_x(z)\}_{x \in X} $ defines a continuous map $ H \colon X \to \Poly_{V}^l $ using the notation in \cref{def:poly}. 
    By assumption, the image of $ H $ is included in $ \Poly_{V, S^1}^{l, r} $. 
    The map $ \conf_{\disk} \colon \Poly_{V, S^1}^{l, r} \to \Conf_{V, \disk}^r $ is continuous by \cref{thm:coker-conti2} and there exists a vector bundle $ \Ecal_{V, \disk}^r \to \Conf_{V, \disk}^r $ over $ \Conf_{V, \disk}^r $ by \cref{prop:global-section-bundle}. 
    Based on the above preparation, we obtain a vector bundle $ \bundle_X(V, H) \coloneqq \conf(H)^* \Ecal_{V, \disk}^r \to X $. 
    Here, we define $ \conf(H) \coloneqq \conf_{\disk} \circ H \colon X \to \Conf_{V, \disk}^r $. 

    \[
        \begin{tikzcd}[column sep=large]
            \bundle_X(V, H) \arrow{rr} \arrow{d} \arrow[dr, phantom, very near start, "\lrcorner"] &  & \Ecal_{V, \disk}^r \arrow{d} \\
            X \arrow{r}[swap]{H} & \Poly_{V, S^1}^{l, r} \arrow{r}[swap]{\conf_{\disk}} & \Conf_{V, \disk}^r
        \end{tikzcd}
    \]
\end{construction}

\subsection*{Structure of \cref{part:bott-periodicity}}
In \cref{sec:construction}, we will construct the configuration spaces $ \Conf_{V, \disk}^r $ and the global section bundles $ \mathcal{E}_{V, \disk}^r \to \Conf_{V, \disk}^r $ and compare with Quot schemes. 
In \cref{sec:continuity}, we will define the configuration maps $ \conf_{\disk} \colon \Poly_{V, S^1}^{l, r} \to \Conf_{V, \disk}^r $ and prove its continuity. 
This configuration map, which sends matrix-coefficient polynomials to configurations on $ \disk $ is a key map in our proof of the Bott periodicity theorem. 
In \cref{sec:proof-of-bott-periodicity}, based on the above, we will define a homomorphism $ \alpha_X \colon K(X \times (D^2, S^1)) \to K(X) $ by using $ \bundle_X(V, H) $ in this section,  confirm well-definedness and show that it is a inverse map of the Bott map $ \beta_X $. 
In \cref{sec:relation}, we will compare our proof with Atiyah--Bott's proof and Atiyah's proof. 

\section{Construction of configuration spaces}\label{sec:construction}

In this section, we will construct the configuration space $ \Conf_{V, U}^r $ and the global section bundle $ \mathcal{E}_{V, U}^r \to \Conf_{V, U}^r $ and compare with Quot scheme. 

\subsection{Construction of configuration spaces and global section bundles}\label{subsec:construction}

\begin{definition}
    Let $ V $ be a finite-dimensional complex vector space, $ U $ an open set in complex plane $ \Cbb $ and $ r $ a non-negative integer. 
    A pair $ (A, \iota) $ is called $ V $-configuration of rank $ r $ over $ U $ if a matrix $ A \in M_r(\Cbb) $ and a linear map $ \iota \colon V \to \Cbb^r $ satisfy the following conditions:
    \begin{itemize}
        \item The set of eigen values $ \sigma(A) $ of $ A $ is included in $ U $ and
        \item $ \displaystyle \sum_{i = 0}^{r - 1} A^i \iota \colon V^{\oplus r} \to \Cbb^r $ is surjective. 
    \end{itemize}
    We write $ \widetilde{\Conf}_{V, U}^r $ for the set of $ V $-configurations of rank $ r $ over $ U $.
    The space $ \widetilde{\Conf}_{V, U}^r $ has a topology as a subspace of $ M_r(\Cbb) \times \Hom_{\Cbb}(V, \Cbb^r) $. 
    A left action of $ \GL_r(\Cbb) $ on $ \widetilde{\Conf}_{V, U}^r $ is defined by conjugation to $ M_r(\Cbb) $ and post-composition to $ \Hom_{\Cbb}(V, \Cbb^r) $. 
\end{definition}

\begin{proposition}\label{prop:global-section-bundle}
    The left action $ \GL_r(\Cbb) \curvearrowright \widetilde{\Conf}_{V, U}^r $is free and proper. 
    In paticular, quotient map $ \widetilde{\Conf}_{V, U}^r \to \GL_r(\Cbb) \backslash \widetilde{\Conf}_{V, U}^r $ is a principal $ \GL_r(\Cbb) $ bundle,  and the quotient space$ \Conf_{V, U}^r \coloneqq \GL_r(\Cbb) \backslash \widetilde{\Conf}_{V, U}^r $ is a $ r \cdot \dim_{\Cbb}{V} $-dimensional complex manifold.
    Furthermore, we get a vector bundle $ \Ecal_{V, U}^r \coloneqq \widetilde{\Conf}_{V, U}^r \times_{\GL_r(\Cbb)} \Cbb^r \to \Conf_{V, U}^r $ of rank $ r $ as an associated bundle. 
\end{proposition}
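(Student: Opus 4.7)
The plan is to reduce everything to the standard quotient theorem: a free and proper holomorphic action of a complex Lie group on a complex manifold descends to a holomorphic principal bundle whose base is itself a complex manifold. First I will note that $\widetilde{\Conf}_{V,U}^r$ is open in $M_r(\Cbb) \times \Hom_\Cbb(V, \Cbb^r)$, since the spectrum condition $\sigma(A) \subset U$ is open (because $U$ is open and eigenvalues depend continuously on the matrix) and surjectivity of $\bigoplus_{i<r} A^i \iota$ is an open condition on the space of linear maps. The $\GL_r(\Cbb)$-action is manifestly algebraic, hence holomorphic, so what remains is to verify freeness and properness.

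For freeness, the plan is to observe that if $g \in \GL_r(\Cbb)$ fixes $(A, \iota)$, then $g$ commutes with $A$ and satisfies $g \iota = \iota$. Consequently $g$ fixes $A^i \iota(v)$ for every $v \in V$ and every $0 \le i < r$, and by the surjectivity hypothesis these vectors span $\Cbb^r$, forcing $g = 1$.

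For properness, the main step I plan to use is a local cyclic-frame construction. Near any point $x_0 = (A_0, \iota_0)$, I can choose indices $(i_k, v_k)_{k=1}^r$ with $0 \le i_k < r$ and $v_k \in V$ such that $A_0^{i_k} \iota_0(v_k)$ form a basis of $\Cbb^r$; this persists on an open neighborhood. Letting $B(A, \iota)$ denote the matrix with these columns, equivariance reads $B(g \cdot x) = g \cdot B(x)$. Given sequences with $x_n \to x_0$ and $g_n \cdot x_n \to y$ in $\widetilde{\Conf}_{V,U}^r$, both $B(x_n) \to B(x_0)$ and $B(g_n \cdot x_n) \to B(y)$ hold, with the former invertible, so $g_n = B(g_n \cdot x_n) B(x_n)^{-1}$ converges to some $g \in M_r(\Cbb)$. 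To see that the limit $g$ actually lies in $\GL_r(\Cbb)$, I will exploit that passing to the limit in $g_n A_n = (g_n A_n g_n^{-1}) g_n$ yields $g A_0 = A' g$, and the convergence $g_n \iota_n \to \iota'$ yields $g \iota_0 = \iota'$, where $y = (A', \iota')$. Together these force $g A_0^i \iota_0(v) = A'^i \iota'(v)$, so the image of $g$ contains the span of the $A'^i \iota'(v)$, which equals $\Cbb^r$ by the surjectivity condition on $y$.

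The remainder is then immediate from the quotient theorem: $\Conf_{V,U}^r$ inherits a complex-manifold structure, the projection becomes a holomorphic principal $\GL_r(\Cbb)$-bundle, and a dimension count gives $(r^2 + r \dim_\Cbb V) - r^2 = r \cdot \dim_\Cbb V$. The associated bundle $\Ecal_{V,U}^r$ corresponding to the defining representation $\GL_r(\Cbb) \curvearrowright \Cbb^r$ is then automatically a rank-$r$ holomorphic vector bundle. I expect properness to be the only substantive step; the cyclic-frame trick is what makes it go through cleanly, and everything else is structural.
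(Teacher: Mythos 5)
Your proposal is correct, but it takes a more hands-on route than the paper. The paper's entire proof is a one-step reduction: the map $\varphi(A,\iota) \coloneqq \sum_{i=0}^{r-1} A^i\iota$ is $\GL_r(\Cbb)$-equivariant from $\widetilde{\Conf}_{V,U}^r$ to the space $\operatorname{Surj}(V^{\oplus r},\Cbb^r)$ of surjections, on which $\GL_r(\Cbb)$ acts by post-composition; since that action is free and proper (its quotient being a Grassmannian), freeness and properness pull back along $\varphi$. Your freeness argument is essentially the same computation unpacked ($g$ fixes every $A^i\iota(v)$, which span $\Cbb^r$). Your properness argument, however, is genuinely different and self-contained: you build a local cyclic frame $B(A,\iota)$ near the limit point, extract $g_n = B(g_n\cdot x_n)B(x_n)^{-1}$, and show the limit is invertible by propagating $gA_0 = A'g$ and $g\iota_0=\iota'$ to conclude that $\Im(g)$ contains the span of the $A'^i\iota'(v)$. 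This is correct (the sequential criterion suffices here since all spaces are metrizable and $\GL_r(\Cbb)$ is open in $M_r(\Cbb)$), and it buys a proof that does not invoke properness of the Grassmannian quotient as a black box; the paper's argument buys brevity at the cost of citing that standard fact. Your additional observations — openness of $\widetilde{\Conf}_{V,U}^r$ in $M_r(\Cbb)\times\Hom_\Cbb(V,\Cbb^r)$ and the dimension count $(r^2+r\dim_\Cbb V)-r^2$ — are correct and are left implicit in the paper.
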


We call $ \Conf_{V, U}^r $ $ V $- configuration space of rank $ r $ over $ U $, $ \Ecal_{V, U}^r \to \Conf_{V, U}^r $ the global section bundle over $ \Conf_{V, U}^r $. 
We write $ [A, \iota] \in \Conf_{V, U}^r $ for the equivalence class of $ (A, \iota) \in \tilde{\Conf}_{V, U}^r $. 

\begin{proof}
    Let $ \operatorname{Surj}(V^{\oplus r}, \Cbb^{r}) $ denote the $ \Cbb $-linear space of surjective $ \Cbb $-linear maps from $ V^{\oplus r} $ to $ \Cbb^r $. 
    The left action $ \GL_{r}(\Cbb) \curvearrowright \operatorname{Surj}(V^{\oplus r}, \Cbb^r) $ is free and proper, and the quotient space is the grassmannian. 
    Let $ \varphi \colon \widetilde{\Conf}_{V, U}^r \to \operatorname{Surj}(V^{\oplus r}, \Cbb^r) $ be defined by $ \varphi(A, \iota) \coloneqq \sum_{i = 0}^{r - 1} A^r \iota $, then $ \varphi $ is a $ \GL_r(\Cbb) $-equivariant map. 
    Thus, the action $ \GL_r(\Cbb) \curvearrowright \widetilde{\Conf}_{V, U}^r $ is also free and proper. 
\end{proof}

Bundle homomorphisms 
\begin{align*}
    A_{V, U}^r \colon \mathcal{E}_{V, U}^r \to \mathcal{E}_{V, U}^r, \quad
    \iota_{V, U}^r \colon \underline{V} \to \mathcal{E}_{V, U}^r
\end{align*}
over $ \Conf_{V, U}^r $ is defined as follows:
    First, let $ A_{V, U}^r \colon \mathcal{E}_{V, U}^r = \tilde{\Conf}_{V, U}^r \times_{\GL_r(\Cbb)} \Cbb^r \to \mathcal{E}_{V, U}^r = \tilde{\Conf}_{V, U}^r \times_{\GL_r(\Cbb)} \Cbb^r $ be defined by 
    \[
        A_{U, V}^r([A, \iota, v]) \coloneqq [A, \iota, Av]
    \]
    for $ (A, \iota, v) \in \tilde{\Conf}_{V, U}^r \times \Cbb^r $. 
    This map is well-defined. 
    Next, let $ \iota_{U, V}^r \colon \underline{V} = \Conf_{V, U}^r \times V \to \mathcal{E}_{V, U}^r = \tilde{\Conf}_{V, U}^r \times_{\GL_r(\Cbb)} \Cbb^r $ be defined by 
    \[
        \iota_{U, V}^r([A, \iota], w) \coloneqq [A, \iota, \iota w]
    \]
    for $ ([A, \iota], w) \in \Conf_{V, U}^r \times V $. 
    This map is also well-defined. 

\begin{lemma}\label{lem:configuration-space-one-to-one}
    For a topological space $ X $, there is a one-to-one correspondence as sets:
    \begin{align*}
        & \{\text{continuous maps }X \to \Conf_{V, U}^r\} \\
        & \cong \left\{(E, A \colon E \to E, \iota \colon \underline{V} \to E) \setmid
        \begin{aligned}
            & \text{$ E $ is a vector bundle of rank r over $ X $, }\\
            & \text{$ \sigma(A_x) \subset U $ for all $ x \in X $ and} \\
            & \text{$ \sum_{i = 0}^{r - 1} A^i \iota \colon \underline{V}^{\oplus r} \twoheadrightarrow E $ is a surjection. }
        \end{aligned}\right\} \Big/{\cong}. 
    \end{align*}
\end{lemma}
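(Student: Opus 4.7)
The plan is to establish the bijection by constructing each direction explicitly, exploiting the principal bundle structure $\widetilde{\Conf}_{V, U}^r \to \Conf_{V, U}^r$ supplied by \cref{prop:global-section-bundle}.

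For the direction $(\Rightarrow)$, given a continuous map $f \colon X \to \Conf_{V, U}^r$, I pull back the universal data: set $E \coloneqq f^* \Ecal_{V, U}^r$, $A \coloneqq f^* A_{V, U}^r$, and $\iota \coloneqq f^* \iota_{V, U}^r$. Both the spectral condition $\sigma(A_x) \subset U$ and the surjectivity of $\sum_{i = 0}^{r-1} A^i \iota$ are pointwise conditions in the fibre, and they hold tautologically for the universal triple over $\Conf_{V, U}^r$ by the very definition of $\widetilde{\Conf}_{V, U}^r$, so they survive pullback.

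For the direction $(\Leftarrow)$, given a triple $(E, A, \iota)$, I form the principal $\GL_r(\Cbb)$-bundle $P \to X$ of linear frames of $E$, whose fibre over $x$ consists of linear isomorphisms $\varphi \colon \Cbb^r \xrightarrow{\sim} E_x$. I then define
\[
    \Phi \colon P \to M_r(\Cbb) \times \Hom_{\Cbb}(V, \Cbb^r), \qquad \Phi(x, \varphi) \coloneqq \bigl(\varphi^{-1} A_x \varphi,\; \varphi^{-1} \circ \iota_x\bigr).
\]
The hypotheses on $(E, A, \iota)$ ensure that $\Phi$ factors through $\widetilde{\Conf}_{V, U}^r$, and a direct computation shows that $\Phi$ is continuous and equivariant for the $\GL_r(\Cbb)$-action (conjugation on the matrix slot, post-composition on the homomorphism slot). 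Composing with the quotient map $\widetilde{\Conf}_{V, U}^r \to \Conf_{V, U}^r$ therefore descends through $P \to X$ to a continuous map $f \colon X \to \Conf_{V, U}^r$.

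To finish I would verify that these assignments are mutually inverse up to the stated equivalence. Starting from $f$, passing to $f^*(\Ecal_{V, U}^r, A_{V, U}^r, \iota_{V, U}^r)$, and then running the frame-bundle construction returns $f$ tautologically, because the frame bundle of $f^*\Ecal_{V, U}^r$ is canonically $f^*\widetilde{\Conf}_{V, U}^r$ and $\Phi$ is in this case the projection. Conversely, starting from $(E, A, \iota)$, the associated-bundle identity $P \times_{\GL_r(\Cbb)} \Cbb^r \cong E$ produces the required isomorphism carrying the pulled-back matrix and map back to $A$ and $\iota$. I expect no serious mathematical obstacle; the only delicate point is the bookkeeping with the $\GL_r(\Cbb)$-equivariance so that the locally defined tautological maps glue through the quotient, which is precisely what the freeness and properness from \cref{prop:global-section-bundle} guarantee.
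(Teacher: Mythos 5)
Your proposal is correct and follows essentially the same route as the paper: pull back the universal triple $(\Ecal_{V,U}^r, A_{V,U}^r, \iota_{V,U}^r)$ in one direction, and in the other direction produce a classifying map from $(E,A,\iota)$ and check the two constructions are mutually inverse. The only cosmetic difference is that you phrase the inverse direction via an equivariant map out of the frame bundle of $E$ descending to the quotient, whereas the paper glues maps defined on a trivializing cover --- these are the same construction in different packaging (and note the descent only needs that $P \to X$ is a quotient map with the composite fibrewise constant, not the freeness/properness per se).
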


\begin{proof}
    Let us denote by $ \Scal $ the left-hand side set and $ \Tcal $ the right-hand side set. 
    First, a map $ F \colon \mathcal{S} \to \mathcal{T} $ is defined as follows:
    Take $ f \colon X \to \Conf_{V, U}^r \in \mathcal{S} $.
    Taking the pullback along $ f \colon X \to \Conf_{V, U}^r $ of the vector bundle $ \mathcal{E}_{V, U}^r \to \Conf_{V, U}^r $ and bundle homomorphisms $ A_{V, U}^r \colon \mathcal{E}_{V, U}^r \to \mathcal{E}_{V, U}^r $ and $ \iota_{V, U}^r \colon \underline{V} \to \mathcal{E}_{V, U}^r $, we obtain a vector bundle $ f^* \mathcal{E}_{V, U}^r \to X $ and bundle homomorphisms $ f^* A_{V, U}^r \colon f^* \mathcal{E}_{V, U}^r \to f^* \mathcal{E}_{V, U}^r $ and $ f^* \iota_{V, U}^r \colon \underline{V} \to f^* \mathcal{E}_{V, U}^r $. 
    Then, we define 
    \[
        F(f) \coloneqq [f^* \mathcal{E}_{V, U}^r, f^* A_{V, U}^r, f^* \iota_{V, U}^r] \in \mathcal{T}. 
    \]

    Next, a map $ G \colon \mathcal{T} \to \mathcal{S} $ is defined as follows:
    Take $ [E, A \colon E \to E, \iota \colon \underline{V} \to E] \in \mathcal{T} $. 
    Let $ \{O_\alpha\}_\alpha $ be a trivialization covering of the vector bundle $ E \to X $. 
    Then, for any $ \alpha $, we get a continuous map $ f_\alpha \colon O_\alpha \to \Conf_{V, U}^r $ from $ \rest{A}{O_\alpha} \colon \rest{E}{O_\alpha} \to \rest{E}{O_\alpha} $, $ \rest{\iota}{O_\alpha} \colon \underline{V} \to \rest{E}{O_\alpha} $ by using trivialization. 
    By construction, $ \{f_\alpha\}_\alpha $ are glued together and a continuous map $ f_E \colon X \to \Conf_{V, U}^r $ is defined.
    Then, we define $ G([E, A, \iota]) = f_E \in \mathcal{S} $. 

    By construction, $ F \colon \Scal \to \Tcal $ and $ G \colon \Tcal \to \Scal $ give the inverse each other. 
\end{proof}

Next, we will define configuration spaces over $ \Cbb P^1 = \Cbb \sqcup \{\infty\} $ and its open sets. 
Roughly speaking, by identifying $ V $-configuration spaces of rank $ r $ over $ \Cbb P^1 \setminus \{p\} $ with $ \Conf_{V, \Cbb}^r $ by M\"obius transformations and gluing them all over $ p \in \Cbb P^1 $, we get the $ V $-configuration space of rank $ r $ over $ \Cbb P^1 $. 
Specifically, it is constructed as follows:
Let $ \mobius_p $ be defined by 
\[
    \mobius_p(z) \coloneqq \frac{1}{z - p}
\]
for $ p \in \Cbb $. 
Let $ U $ be an open set of $ \Cbb $. 
For each $ p \in \Cbb P^1 $, prepare a copy of $ \Conf_{V, U}^r $ and write $ {}^p\Conf_{V, U}^r \coloneqq \Conf_{V, U}^r $. 
In the case of $ p \in \Cbb $, 
The element of $ {}^p \Conf_{V, U}^r $ corresponding to $ [A, \iota] \in \Conf_{V, U}^r $ is formally written as $ [\mobius_p^{-1}(A), \iota] $. 
Let $ \sim $ be an equivalence relation over $ {}^\infty\Conf_{V, \Cbb}^r \sqcup \coprod_{p \in \Cbb} {}^p\Conf_{V, \Cbb}^r $ generated by the following relations. 

\begin{enumerate}
    \item Let $ p \in \Cbb $. 
    Let $ [A, \iota] \in {}^\infty\Conf_{V, \Cbb}^r $ and suppose that $ p $ is not an eigenvalue of $ A $. 
    Then, 
    \[
        {}^\infty\Conf_{V, \Cbb}^r \ni [A, \iota] \sim [\mobius_p^{-1}(\mobius_p(A)), \iota]_p \in {}^p\Conf_{V, \Cbb}^r. 
    \]
    where $ \mobius_p(A) $ is defined by functional calculus. 
    We can show that $ [\mobius_p(A), \iota] \in \Conf_{V, \Cbb}^r $ by applying the following \cref{lem:relation} to the case that $ \mobius = \mobius_p $. 
    \item Let $ p, q \in \Cbb $, $ p \neq q $. 
    Let $ [A, \iota] \in {}^p\Conf_{V, \Cbb}^r $ and suppose that $ (q - p)^{-1} $ is not an eigenvalue of $ A $. 
    Then,  
    \[
        {}^p\Conf_{V, \Cbb}^r \ni [\mobius_p^{-1}(A), \iota]_p \sim [\mobius_q^{-1}(\mobius_q\mobius_p^{-1}(A)), \iota]_q \in {}^q\Conf_{V, \Cbb}^r
    \]
    We can show that $ [\mobius_q\mobius_p^{-1}(A), \iota] \in \Conf_{V, \Cbb}^r $ by applying the following \cref{lem:relation} to the case that $ \mobius = \mobius_q\mobius_p^{-1} $. 
\end{enumerate}

\begin{lemma}\label{lem:relation}
    Let $ \mobius(z) = (az + b)(cz + d)^{-1} $ be a M\"obius transformation and $ c \neq 0 $. 
    Let $ [A, \iota] \in \Conf_{V, \Cbb}^r $, suppose that $ - c^{-1}d $ is not an eigenvalue of $ A $. 
    Then, $ [\mobius(A), \iota] \in \Conf_{V, \Cbb}^r $ holds. 
\end{lemma}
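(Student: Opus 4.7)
The plan is to reduce the statement to the algebraic identity $\Cbb[A] = \Cbb[\mobius(A)]$ inside $M_r(\Cbb)$. First I would observe that the eigenvalue condition is automatic: the hypothesis $-c^{-1}d \notin \sigma(A)$ makes $cA + dI$ invertible, so $\mobius(A) = (aA+bI)(cA+dI)^{-1} \in M_r(\Cbb)$ is well-defined via functional calculus, and its eigenvalues are $\{\mobius(\lambda) : \lambda \in \sigma(A)\}$, all finite complex numbers and therefore in $\Cbb$. What remains is to verify that $\sum_{i=0}^{r-1} \mobius(A)^i \iota \colon V^{\oplus r} \to \Cbb^r$ is surjective.

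For the inclusion $\mobius(A) \in \Cbb[A]$ I would argue that the minimal polynomial $m(z)$ of $A$ satisfies $m(-c^{-1}d) \neq 0$, so $m(z)$ and $cz + d$ are coprime in $\Cbb[z]$, and Bezout yields $(cA + dI)^{-1} \in \Cbb[A]$. For the reverse inclusion I would use the inverse M\"obius transformation $\mobius^{-1}(w) = (dw - b)(-cw + a)^{-1}$, which exists by non-degeneracy $ad - bc \neq 0$. The matrix $-c\mobius(A) + aI$ must be invertible for this substitution to make sense: if some eigenvalue $\mobius(\lambda)$ of $\mobius(A)$ equalled $a/c$, then $c(a\lambda+b) = a(c\lambda+d)$, which simplifies to $bc = ad$, a contradiction. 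Hence $A = \mobius^{-1}(\mobius(A)) \in \Cbb[\mobius(A)]$, establishing $\Cbb[A] = \Cbb[\mobius(A)]$.

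To conclude, Cayley--Hamilton applied to $\mobius(A)$ shows $\Cbb[\mobius(A)]$ is spanned as a $\Cbb$-vector space by $I, \mobius(A), \dots, \mobius(A)^{r-1}$, and likewise for $A$. Consequently
\[
    \Im\Bigl(\textstyle\sum_{i=0}^{r-1} \mobius(A)^i \iota\Bigr) = \Cbb[\mobius(A)] \cdot \iota(V) = \Cbb[A] \cdot \iota(V) = \Im\Bigl(\textstyle\sum_{i=0}^{r-1} A^i \iota\Bigr) = \Cbb^r,
\]
where the last equality uses the hypothesis $[A, \iota] \in \Conf_{V, \Cbb}^r$. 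The only step requiring any care is the invertibility of $-c\mobius(A) + aI$ used in the reverse inclusion, and this reduces to the non-degeneracy $ad - bc \neq 0$ as noted above; beyond that, the argument is purely formal manipulation in the commutative algebra generated by $A$.
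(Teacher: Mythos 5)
Your proof is correct and rests on the same key fact as the paper's: $A$ can be recovered as a polynomial in $\mobius(A)$ via the inverse M\"obius transformation $\mobius^{-1}(w) = (dw - b)(-cw + a)^{-1}$, once one knows $-c\mobius(A) + aI$ is invertible. The packaging differs. You phrase this as the subalgebra relation $\Cbb[A] = \Cbb[\mobius(A)] \subset M_r(\Cbb)$ and conclude immediately from the observation that $\Im\bigl(\sum_{i=0}^{r-1} B^i\iota\bigr) = \Cbb[B]\cdot\iota(V)$ for any $B \in M_r(\Cbb)$. The paper instead works at the level of polynomial rings: it introduces the evaluation maps $q_1, q_2 \colon \Cbb[z] \otimes V \to \Cbb^r$ sending $\sum z^i \otimes v_i$ to $\sum A^i\iota(v_i)$ and $\sum \mobius(A)^i\iota(v_i)$ respectively, and exhibits a substitution endomorphism $\psi$ of $\Cbb[z]\otimes V$ (given by $z \mapsto (dz-b)g(z)$, where $g$ is chosen so that $g(\mobius(A)) = (-c\mobius(A)+aI)^{-1}$) satisfying $q_1 = q_2\circ\psi$, from which surjectivity of $q_2$ follows. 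Your version is shorter and makes the symmetry between $A$ and $\mobius(A)$ explicit; the paper's diagram-chase is closer in flavour to \cref{lem:gluing}, where the same substitution trick reappears. Two small streamlinings: the Bezout step is unnecessary, since for any invertible matrix $B$ its inverse lies in $\Cbb[B]$ by Cayley--Hamilton, and $\Cbb[cA+dI] = \Cbb[A]$ because $c \neq 0$; and you only need the inclusion $\Cbb[A] \subseteq \Cbb[\mobius(A)]$, not the full equality. Also, the one-line identity $-c\mobius(A) + aI = (ad-bc)(cA+dI)^{-1}$, which the paper uses, gives the required invertibility more directly than your eigenvalue argument, though both are correct.
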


\begin{proof}
    Since $ cA + d $ is invertible, $ \mobius(A) = (aA + b)(cA + d)^{-1} $ is well-defined. 
    We will show that 
    \[
        \sum_{i = 0}^{r - 1} \mobius(A)^i \iota \colon V^{\oplus r} \to \Cbb^r
    \]
    is surjective. 
    By assumption, 
    \[
        \sum_{i = 0}^{r - 1} A^i \iota \colon V^{\oplus r} \to \Cbb^r
    \]
    is a surjection. 
    That is to say
    \[
        q_1 \colon \Cbb[z] \otimes V \to \Cbb^r ; \  \sum_{i = 0}^k z^i \otimes v_i \mapsto \sum_{i = 0}^k A^i \iota(v_i)
    \]
    is a surjection. 
    Define
    \[
        q_2 \colon \Cbb[z] \otimes V \to \Cbb^r ; \  \sum_{i = 0}^k z^i \otimes v_i \mapsto \sum_{i = 0}^k \mobius(A)^i \iota(v_i). 
    \]
    Using the characteristic polynomial of $ -c\mobius(A) + a $ and the fact that $ -c\mobius(A) + a = (ad - bc)(cA + d)^{-1} $ is invertible, we can see that there exists a polynomial $ g(z) \in \Cbb[z] $ such that $ (-c\mobius(A) + a)^{-1} = g(\mobius(A)) $. 
    Define
    \[
        \psi \colon \Cbb[z] \otimes V \to \Cbb[z] \otimes V ; \  \sum_{i = 0}^k z^i \otimes v_i \mapsto \sum_{i = 0}^k ((dz - b)g(z))^i \otimes v_i. 
    \]
    Then, $ q_1 = q_2 \circ \psi $. 
    In fact, 
    \begin{align*}
        q_2 \circ \psi\left(\sum_{i = 0}^k z^i \otimes v_i\right) & = q_2\left(\sum_{i = 0}^k ((dz - b)g(z))^i \otimes v_i\right) \\
        & = \sum_{i = 0}^k ((d\mobius(A) - b)g(\mobius(A)))^i \iota(v_i) \\
        & = \sum_{i = 0}^k ((d\mobius(A) - b)(-c\mobius(A)+ a)^{-1})^i \iota(v_i) \\
        & = \sum_{i = 0}^k A^i \iota(v_i) = q_1\left(\sum_{i = 0}^k z^i \otimes v_i\right). 
    \end{align*}
    Thus, $ q_1 $ is surjective, so is $ q_2 $. 
    Since degree of the characteristic polynomial $ \mobius(A) $, the surjectivity to be shown holds. 
\end{proof}

\begin{definition}
    Let $ V $-configuration space $ \Conf_{V, \Cbb P^1}^r $ of rank $ r $ over $ \Cbb P^1 $ be defined by 
    \[
        \Conf_{V, \Cbb P^1}^r \coloneqq \left({}^\infty\Conf_{V, \Cbb}^r \sqcup \coprod_{p \in \Cbb} {}^p\Conf_{V, \Cbb}^r\right) \Big/ {\sim}. 
    \]
    The topology is the quotient topology of the direct sum topology. 
    Let $ U $ be an open set of $ \Cbb P^1 $. 
    Let $ V $-configuration space $ \Conf_{V, U}^{r} \subset \Conf_{V, \Cbb P^1}^r $ of rank $ r $ over $ U $ be defined by 
    \[
        \Conf_{V, U}^r \coloneqq {}^{\infty}\Conf_{V, U \setminus \{\infty\}}^r \cup \bigcup_{p \in \Cbb} {}^p\Conf_{V, \mobius_p(U) \setminus \{\infty\}}^r \subset \Conf_{V, \Cbb P^1}^r. 
    \]
    Here, we regard $ {}^\infty\Conf_{V, U \setminus \{\infty\}}^r \subset {}^\infty\Conf_{V, \Cbb}^r $ and $ {}^p\Conf_{V, \mobius_p(U) \setminus \{\infty\}}^r \subset {}^p\Conf_{V, \Cbb}^r $ for $ p \in \Cbb $. 
\end{definition}
    
\subsection{Restriction of configuration spaces}\label{subsec:restriction}

The contents in this section will be used only in the proof of \cref{thm:coker-conti2}. 
The aim of this section is defining $ \Conf_{V, \Cbb P^1 \setminus \gamma}^{r, dl - r} $ and the restriction maps of configuration spaces 
\[
    \res_{D_\gamma} \colon \Conf_{V, \Cbb P^1 \setminus \gamma}^{r, dl - r} \to \Conf_{V, D_\gamma}^r.
\] 
Let $ d, l \geq 0 $ and $ 0 \leq r \leq dl $. 
Let $ \gamma $ be a simple closed curve in $ \Cbb $. 
Let $ D_{\gamma} $ be the bounded one of the two connected components of $ \Cbb \setminus \gamma $. 

\begin{definition}
    Let 
    \[
        \Conf_{V, \Cbb \setminus \gamma}^{r, dl - r} \subset \Conf_{V, \Cbb \setminus \gamma}^{dl}
    \]
    denote the subspace of $ \Conf_{V, \Cbb \setminus \gamma}^{dl} $ which consists of $ [A, \iota] \in \Conf_{V, \Cbb \setminus \gamma}^{dl} $ such that the number of roots with multiplicity in $ D_\gamma $ of the characteristic polynomial of $ A $ is $ r $. 
    The subspaces of $ {}^\infty\Conf_{V, \Cbb \setminus \gamma}^{dl} $ and $ {}^p\Conf_{V, \Cbb \setminus \gamma}^{dl} $ corresponding to $ \Conf_{V, \Cbb \setminus \gamma}^{r, dl - r} \subset \Conf_{V, \Cbb \setminus \gamma}^{dl} $ are denoted by $ {}^\infty\Conf_{V, \Cbb \setminus \gamma}^{r, dl - r} $, $ {}^p\Conf_{V, \Cbb \setminus \gamma}^{r, dl - r} $, respectively. 
    Note that 
    \[
        \Conf_{V, \Cbb P^1 \setminus \gamma}^{dl} = {}^\infty\Conf_{V, \Cbb \setminus \gamma}^{dl} \cup \bigcup_{p \in \Cbb \setminus \bar{D_\gamma}} {}^p\Conf_{V, \Cbb \setminus \mobius_p(\gamma)}^{dl}. 
    \]
    The subspace $ \Conf_{V, \Cbb P^1 \setminus \gamma}^{r, dl - r} \subset \Conf_{V, \Cbb P^1 \setminus \gamma}^{dl} $ is defined by 
    \[
        \Conf_{V, \Cbb P^1 \setminus \gamma}^{r, dl - r} \coloneqq {}^\infty\Conf_{V, \Cbb \setminus \gamma}^{r, dl - r} \cup \bigcup_{p \in \Cbb \setminus \bar{D_\gamma}} {}^p\Conf_{V, \Cbb \setminus \mobius_p(\gamma)}^{r, dl - r}. 
    \]
\end{definition}

We define a map $ \res_{D_\gamma} \colon \Conf_{V, \Cbb \setminus \gamma}^{r, dl - r} \to \Conf_{V, D_\gamma}^r $ as follows: 
Take $ [A, \iota] \in \Conf_{V, \Cbb \setminus \gamma}^{r, dl - r} $. 
Let $ W \subset \Cbb^{dl} $ denote the direct sum of generalized eigenspaces corresponding to eigenvalues of $ A $ included in $ D_\gamma $. 
Also, let $ W' \subset \Cbb^{dl} $ denote the direct sum of generalized eigenspaces corresponding to eigenvalues of $ A $ that are not included in $ D_\gamma $. 
We can see that $ \dim_\Cbb W = r $, $ \dim_\Cbb W' = dl - r $. 
Let $ A_W \colon W \to W $ be the restriction of $ A \colon \Cbb^{dl} \to \Cbb^{dl} $ on $ W $. 
Let $ \proj_W \colon \Cbb^{dl} \to W $ be the projection with respect to the direct sum decomposition $ \Cbb^{dl} = W \oplus W' $. 
Then, we define $ \res_{D_\gamma}([A, \iota]) \coloneqq [A_W, \proj_W \circ \iota \colon V \to W] \in \Conf_{V, D_\gamma}^r $. 

\begin{lemma}\label{lem:proj-conti}
    The map $ \res_{D_\gamma} \colon \Conf_{V, \Cbb \setminus \gamma}^{r, dl - r} \to \Conf_{V, D_\gamma}^r $ is continuous. 
\end{lemma}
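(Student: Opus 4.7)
The plan is to exploit the principal $\GL_{dl}(\Cbb)$-bundle
    \[
        q_1 \colon \widetilde{\Conf}_{V, \Cbb \setminus \gamma}^{r, dl - r} \to \Conf_{V, \Cbb \setminus \gamma}^{r, dl - r}
    \]
    and the analogous quotient $q_2$ for the target: by the universal property of the quotient topology, continuity of $\res_{D_\gamma}$ is equivalent to continuity of $\res_{D_\gamma} \circ q_1$. I will verify this locally on $\widetilde{\Conf}_{V, \Cbb \setminus \gamma}^{r, dl - r}$ by exhibiting explicit continuous lifts to $\widetilde{\Conf}_{V, D_\gamma}^r$ through $q_2$.

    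The key tool is the Riesz spectral projection. Fix $(A_0, \iota_0) \in \widetilde{\Conf}_{V, \Cbb \setminus \gamma}^{r, dl - r}$. Since the Jordan domain $D_\gamma$ is simply connected and $\sigma(A_0) \cap D_\gamma$ is finite, one can choose a piecewise smooth simple closed curve $\Gamma \subset D_\gamma \setminus \sigma(A_0)$ enclosing precisely those $r$ eigenvalues. On the open neighborhood $U \coloneqq \{A \in M_{dl}(\Cbb) : \Gamma \cap \sigma(A) = \emptyset\}$ of $A_0$, the contour integral
    \[
        P(A) \coloneqq \frac{1}{2\pi i}\oint_\Gamma (z - A)^{-1}\, dz
    \]
    depends continuously on $A$ and is the algebraic projection onto the sum of generalized eigenspaces of $A$ lying inside $\Gamma$; shrinking $U$ using continuity of eigenvalues, I ensure $\Im P(A) = W(A)$ of dimension $r$ throughout, and in particular $P(A) = \proj_{W(A)}$ with kernel $W'(A)$.

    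Next I would trivialize $W(A)$ continuously. Fix any linear injection $T_0 \colon \Cbb^r \hookrightarrow \Cbb^{dl}$ with $\Im T_0 = W(A_0)$; then $S(A) \coloneqq P(A) T_0$ is continuous, equals $T_0$ at $A_0$, and so defines an isomorphism $\Cbb^r \xrightarrow{\sim} W(A)$ on a smaller neighborhood $U' \subset U$. Pick a continuous left inverse $L(A)$ of $S(A)$ on $U'$, e.g.\ the Moore--Penrose pseudoinverse $L(A) = (S(A)^\ast S(A))^{-1} S(A)^\ast$ with respect to the standard Hermitian form on $\Cbb^{dl}$. Then the candidate lift
    \[
        (A, \iota) \longmapsto \bigl(B(A),\, \phi(A, \iota)\bigr) \coloneqq \bigl(L(A)\, A\, S(A),\ L(A)\, P(A)\, \iota\bigr) \in M_r(\Cbb) \times \Hom_\Cbb(V, \Cbb^r)
    \]
    is continuous in $(A, \iota)$ on $U' \times \Hom_\Cbb(V, \Cbb^{dl})$.

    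Finally I would check that this lift lands in $\widetilde{\Conf}_{V, D_\gamma}^r$ and descends under $q_2$ to $\res_{D_\gamma} \circ q_1$. Since $W(A)$ is $A$-invariant, $A S(A)$ has image in $\Im S(A) = W(A)$, where $L(A)$ restricts to $S(A)^{-1}$; hence $B(A) = S(A)^{-1} A_{W(A)} S(A)$ is conjugate to $A_{W(A)}$, so $\sigma(B(A)) \subset D_\gamma$, and likewise $\phi(A, \iota) = S(A)^{-1}\, \proj_{W(A)}\, \iota$. The required surjectivity $\sum_{i = 0}^{r - 1} B(A)^i \phi(A, \iota) \colon V^{\oplus r} \twoheadrightarrow \Cbb^r$ transports under $S(A)$ to surjectivity of $\sum_{i = 0}^{r - 1} A_{W(A)}^i \proj_{W(A)} \iota$ onto $W(A)$, which follows from the hypothesis that $\sum_{i = 0}^{dl - 1} A^i \iota \colon V^{\oplus dl} \twoheadrightarrow \Cbb^{dl}$ by post-composing with $\proj_{W(A)}$, using $\proj_{W(A)} A^i = A_{W(A)}^i \proj_{W(A)}$, and collapsing indices $i \geq r$ via Cayley--Hamilton applied to $A_{W(A)}$ on the $r$-dimensional space $W(A)$. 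The identity $q_2(B(A), \phi(A, \iota)) = [A_{W(A)}, \proj_{W(A)} \circ \iota] = \res_{D_\gamma}([A, \iota])$ is then immediate, so $\res_{D_\gamma} \circ q_1$ is continuous on $U'$, and $(A_0, \iota_0)$ being arbitrary, $\res_{D_\gamma}$ is continuous. The step I expect to require the most care is the continuous, canonical framing of the moving spectral subspace $W(A)$; the Riesz integral together with the trick $S(A) = P(A) T_0$ from perturbation theory is what makes this work without ad hoc basis choices.
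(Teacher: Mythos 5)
Your plan is correct and rests on the same key mechanism as the paper's own proof: the Riesz contour integral shows that the spectral subspace $W(A)$, the restriction $A_{W(A)}$, and the compression $\proj_{W(A)}\circ\iota$ all depend continuously on $(A,\iota)$, and Cayley--Hamilton on the $r$-dimensional invariant subspace gives the required surjectivity. The only difference is packaging: the paper works downstairs with the tautological bundle $\mathcal{E}_{V,\Cbb\setminus\gamma}^{r,dl-r}$, splits it as $E_1\oplus E_2$ using $\frac{1}{2\pi i}\int_\gamma z\,(z\cdot\id-A)^{-1}\,dz$, and then invokes \cref{lem:configuration-space-one-to-one}, whereas you work upstairs on $\widetilde{\Conf}_{V,\Cbb\setminus\gamma}^{r,dl-r}$ with the explicit local framing $S(A)=P(A)T_0$ and descend through the quotient topology --- the same local-trivialization argument in different clothing.
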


\begin{proof}
    We defined a vector bundle $ \mathcal{E}_{V, \Cbb \setminus \gamma}^{dl} $ over $ \Conf_{V, \Cbb \setminus \gamma}^{dl} $ and bundle homomorphisms
    \[
        A^{dl} \coloneqq A_{V, \Cbb \setminus \gamma}^{dl} \colon \mathcal{E}_{V, \Cbb \setminus \gamma}^{dl} \to \mathcal{E}_{V, \Cbb \setminus \gamma}^{dl}, \quad
        \iota^{dl} \coloneqq \iota_{V, \Cbb \setminus \gamma}^{dl} \colon \underline{V} \to \mathcal{E}_{V, \Cbb \setminus \gamma}^{dl}
    \]
    in \cref{subsec:construction}. 
    Let $ \mathcal{E}_{V, \Cbb \setminus \gamma}^{r, dl - r} $ denote the restriction of the vector bundle $ \mathcal{E}_{V, \Cbb \setminus \gamma}^{dl} \to \Conf_{V, \Cbb \setminus \gamma}^{dl} $ on $ \Conf_{V, \Cbb \setminus \gamma}^{r, dl - r} $. 
    Also, let $ A^{r, dl - r} \colon \mathcal{E}_{V, \Cbb \setminus \gamma}^{r, dl - r} \to \mathcal{E}_{V, \Cbb \setminus \gamma}^{r, dl - r} $ and $ \iota^{r, dl - r} \colon \underline{V} \to \mathcal{E}_{V, \Cbb \setminus \gamma}^{r, dl - r} $ denote the restriction of $ A^{dl} $ and $ \iota^{dl} $, respectively. 
    For simplicity of notation, we write $ X \coloneqq \Conf_{V, \Cbb \setminus \gamma}^{r, dl - r} $ and $ E \coloneqq \mathcal{E}_{V, \Cbb \setminus \gamma}^{r, dl - r} $. 
    By construction, for each $ x \in X $, $ A^{r, dl - r}_x $ does not have eigenvalues on $ \gamma $ and the sum of dimensions of generalized eigenspaces corresponding to eigenvalues included inside $ \gamma $ is $ r $. 
    Let $ E_1 $ denote the subbundle of $ E $ which consists of generalized eigenspaces corresponding to eigenvalues inside $ \gamma $. 
    Also, let $ E_2 $ denote the subbundle  of $ E $ which consists of generalized eigenspaces corresponding to eigenvalues outside $ \gamma $. 
    Note that $ E_1 \oplus E_2 = E $. 
    By the direct sum decomposition $ \End(E) = \End(E_1) \oplus \Hom(E_1, E_2) \oplus \Hom(E_2, E_1) \oplus \End(E_2) $, $ \End(E_1) $ is regarded as a subbundle of $ \End(E) $. 
    $ A^{r, dl - r} $ is a continuous section of $ \End(E) $. 
    By definition, the matrix representation of $ A^{r, dl - r} $ with respect to the direct sum decomposition $ E = E_1 \oplus E_2 $ can be written as 
    \[
        A^{r, dl - r} = 
        \begin{pmatrix}
            A^{r, dl - r}_1 & 0 \\
            0 & A^{r, dl - r}_2
        \end{pmatrix}. 
    \]
    Here, $ A^{r, dl - r}_1 $ and $ A^{r, dl - r}_2 $ are (not necessarily continuous) sections of $ \End(E_1) $ and $ \End(E_2) $, respectively. 
    For any $ x \in X $, we can write
    \[
        \begin{pmatrix}
            A^{r, dl - r}_{1, x} & 0 \\
            0 & 0
        \end{pmatrix}
        = \frac{1}{2 \pi i} \int_\gamma z \cdot (z \cdot \id - A^{r, dl - r}_x)^{-1} dz, 
    \]
    they are continuous for $ x $. 
    Thus, $ A^{r, dl - r}_1 $ is a continuous section of $ \End(E_1) $. 
    Also, $ \iota^{r, dl - r} $ is a continuous section of $ \Hom(\underline{V}, E) $ and the vector representation of it with respect to $ E = E_1 \oplus E_2 $ can be written as 
    \[
        \iota^{r, dl - r} = 
        \begin{pmatrix}
            \iota^{r, dl - r}_1 \\
            \iota^{r, dl - r}_2
        \end{pmatrix}. 
    \]
    Here, $ \iota^{r, dl - r}_1 $ and $ \iota^{r, dl - r}_2 $ are (not necessarily continuous) section of $ \Hom(\underline{V}, E_1) $ and $ \Hom(\underline{V}, E_2) $, respectively. 
    The composition
    \[
        \begin{pmatrix}
            \iota^{r, dl - r}_1 \\
            0
        \end{pmatrix}
    \]
    of $ \iota^{r, dl - r} $ and the first projection is continuous for $ x \in X $, so $ \iota^{r, dl - r}_1 $ is a continuous section of $ \Hom(\underline{V}, E_1) $. 
    From the above, we get a vector bundle $ E_1 $ of rank $ r $ over $ X $ and bundle homomorphisms $ A^{r, dl - r}_1 \colon E_1 \to E_1 $ and $ \iota^{r, dl - r}_1 \colon \underline{V} \to E_1 $. 
    By \cref{lem:configuration-space-one-to-one}, these define a continuous map $ X \to \Conf_{V, D_\gamma}^r $ and it coincides with $ \res_{D_\gamma} \colon X \to \Conf_{V, D_\gamma}^r $ by definition. 
\end{proof}

\begin{remark}
    \cref{lem:proj-conti} can also be proved in the following way: 
    Define a map $ \res_{\Cbb \setminus \bar{D_\gamma}} \colon \Conf_{V, \Cbb \setminus \gamma}^{r, dl - r} \to \Conf_{V, \Cbb \setminus \bar{D_\gamma}}^{dl - r} $ in the same way as $ \res_{D_\gamma} \colon \Conf_{V, \Cbb \setminus \gamma}^{r, dl - r} \to \Conf_{V, D_\gamma}^r $.
    It is easy to see that the map
    \[
        (\res_{D_\gamma}, \res_{\Cbb \setminus \bar{D_\gamma}}) \colon \Conf_{V, \Cbb \setminus \gamma}^{r, dl - r} \to \Conf_{V, D_\gamma}^r \times \Conf_{V, \Cbb \setminus \bar{D_\gamma}}^{dl - r}
    \]
    has a continuous inverse map. 
    By \cref{prop:global-section-bundle}, this inverse map is a continuous bijection between same $ dl \cdot \dim_{\Cbb}{V} $-dimensional complex manifolds, so by invariance of domain, $ (\res_{D_\gamma}, \res_{\Cbb \setminus \bar{D_\gamma}}) $ is also continuous. 
    Therefore, $ \res_{D_\gamma} $ is continuous. 
\end{remark}

\begin{definition}
    A continuous map $ \res_\infty $ is defined by 
    \[
        \res_\infty \coloneqq \res_{D_\gamma} \colon {}^\infty\Conf_{V, \Cbb \setminus \gamma}^{r, dl - r} \to {}^\infty\Conf_{V, D_\gamma}^r. 
    \]
    Also, for each $ p \in \Cbb \setminus \bar{D_\gamma} $, $ \res_p $ is defined as a composition 
    \[
        \res_p \colon {}^p\Conf_{V, \Cbb \setminus \mobius_p(\gamma)}^{r, dl - r} \xrightarrow{\res_{D_{\mobius_p(\gamma)}}} {}^p\Conf_{V, D_{\mobius_p(\gamma)}}^r \xrightarrow{\mobius_p^{-1}} \Conf_{V, D_\gamma}^r. 
    \]
    The family of continuous maps $ \{\res_p\}_{p \in \Cbb P^1 \setminus \bar{D_\gamma}} $ is glued together and a continuous map $ \res_{D_\gamma} \colon \Conf_{V, \Cbb P^1 \setminus \gamma}^{r, dl - r} \to \Conf_{V, D_\gamma}^r $ is defined. 
\end{definition}

\subsection{Comparison with Quot schemes}

In this section, we will compare $ \Conf_{V, \Cbb}^r $ with Quot schemes in algebraic geometry. 
Quot shcemes are the representations of some presheaves over the category of schemes. 
We follow the definition of Quot schemes to \cite{MR2223407}. 

\begin{definition}
    Let $ Sch_\Cbb $ be the category of locally noetherian schemes over $ \Spec{\Cbb} $
    Let $ r \in \Zbb_{\geq 0} $ and $ E $ be a coherent sheaf on $ \Abb_\Cbb^1 $. 
    A contravariant functor $ \Qfrak uot_{E, \Abb_\Cbb^1}^r \colon Sch_\Cbb^{\op} \to Set $ is defined as follows:
    For $ T \to \Spec {\Cbb} $, a pair $ (\Fcal, q) $ is called a family of quotients of $ E $ parametrized by $ T $ if it satisfies that
    \begin{enumerate}
        \item $ \Fcal $ is a coherent sheaf over $ \Abb_T^1 $ such that its schematic support is proper over $ T $ and it is flat over $ T $, 
        \item $ q \colon E_T \to \Fcal $ is surjective $ \Ocal_{\Abb^1_T} $-linear homomorphism where $ E_T $ is pullback of $ E $ along the projection $ \Abb^1_T \to \Abb^1_\Cbb $ and
        \item for each $ t \in T $, the Hirbert polynomial of $ \Fcal_t = \rest{\Fcal}{(\Abb^1_T)_t} $ is constant $ r $. 
    \end{enumerate}
    We define $ \Qfrak uot_{E, \Abb_\Cbb^1}^r(T) $ as the set of isomorphism classes of families of quotients of $ E $ parametrized by $ T $. 
\end{definition}

\begin{fact}[\cite{MR0555258}]
    The functor $ \Qfrak uot_{E, \Abb_\Cbb^1}^r $ is representable. 
\end{fact}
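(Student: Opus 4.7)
The plan is to deduce representability from Grothendieck's classical theorem on Quot schemes over projective varieties, essentially by compactifying $\Abb_\Cbb^1$ to $\Cbb P^1$ and observing that the properness condition imposed on the support makes the affine problem equivalent to an open subproblem of the projective one.

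First I would fix a coherent extension $\tilde{E}$ of $E$ across the open immersion $j \colon \Abb_\Cbb^1 \hookrightarrow \Cbb P^1$; one may take $\tilde{E} = j_* E$ when this is coherent, or any other coherent sheaf on $\Cbb P^1$ restricting to $E$. For a family $(\Fcal, q \colon E_T \to \Fcal)$ parametrized by $T$, the hypothesis that the schematic support of $\Fcal$ is proper over $T$ combined with the fact that this support is contained in $\Abb_T^1$ forces it to be proper over $T$ also when viewed as a closed subscheme of $\Cbb P^1 \times T$. Extension by zero along $j_T \colon \Abb_T^1 \hookrightarrow \Cbb P^1 \times T$ therefore produces a coherent sheaf $\tilde{\Fcal}$ on $\Cbb P^1 \times T$, flat over $T$, whose fibrewise Hilbert polynomial is the constant $r$ and whose support is disjoint from $\{\infty\} \times T$, together with a surjection $\tilde{E}_T \twoheadrightarrow \tilde{\Fcal}$. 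Restriction to $\Abb_T^1$ provides the inverse construction, so this yields a natural identification of $\Qfrak uot_{E, \Abb_\Cbb^1}^r$ with the subfunctor of $\Qfrak uot_{\tilde{E}, \Cbb P^1}^{r}$ cut out by the condition ``support disjoint from $\{\infty\} \times T$''.

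Next I would invoke Grothendieck's representability theorem for $\Qfrak uot_{\tilde{E}, \Cbb P^1}^{r}$. The standard ingredients are (i) Castelnuovo--Mumford regularity, which produces a uniform integer $m$ so that every parametrized quotient with Hilbert polynomial $r$ becomes globally generated with vanishing higher cohomology after twisting by $\Ocal(m)$; (ii) a resulting functorial embedding of the projective Quot functor into a suitable Grassmannian of quotients of $H^0(\Cbb P^1, \tilde{E}(m))$; and (iii) a flattening stratification argument showing that the image of this embedding is cut out by a locally closed subscheme of the Grassmannian. Since the Grassmannian is well known to be representable, this establishes representability of the projective Quot functor. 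Finally, the condition that the support avoids $\{\infty\} \times T$ is an open condition on $T$, so our functor is represented by an open subscheme of the projective Quot scheme.

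The main obstacle is step two, Grothendieck's theorem itself, which is where all of the substantive algebro-geometric content resides, particularly the regularity and flattening stratification arguments needed to realize the image as a locally closed subscheme of the Grassmannian. The compactification step and the openness of the support-avoids-$\infty$ condition are, by contrast, essentially formal once properness of the support is in hand. For this reason the author cites \cite{MR0555258} rather than reproducing the proof.
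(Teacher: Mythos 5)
The paper states this result as a labelled \emph{Fact} with a citation to \cite{MR0555258} and gives no argument of its own, so there is no proof in the paper against which to compare. Your reduction to the projective case is the standard route and your outline is sound: a quotient on $\Abb^1_T$ whose schematic support $Z$ is proper over $T$ satisfies that $Z \hookrightarrow \Cbb P^1_T$ is proper (cancellation against the separated structure map $\Cbb P^1_T \to T$), hence $Z$ is closed in $\Cbb P^1_T$, so extension by zero is coherent and flat and identifies the affine Quot functor with the open subfunctor of $\Qfrak uot_{\tilde E, \Cbb P^1}^r$ where the universal support misses $\{\infty\}$; representability of the latter is Grothendieck's theorem. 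One small caution on your first step: $j_*E$ is essentially never coherent for the open immersion $j \colon \Abb^1_\Cbb \hookrightarrow \Cbb P^1$ (already $j_*\Ocal_{\Abb^1_\Cbb}$ has infinite-dimensional stalk at $\infty$), so the ``when this is coherent'' branch is vacuous in practice; your fallback phrase ``or any other coherent sheaf on $\Cbb P^1$ restricting to $E$'' is what actually does the work, and such extensions always exist on a noetherian scheme. With that adjustment your sketch is a correct account of why the cited representability holds.
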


We write $ \mathrm{Quot}_{E, \Abb_\Cbb^r}^r $ for the representation of the functor $ \Qfrak uot_{E, \Abb_\Cbb^1}^r $. 
In the case of $ E = \Ocal_{\Abb^1_\Cbb}^d $, by doing the same construction as in $ \Conf_{\Cbb^d, \Cbb}^r $ algebraically, we can describe the representation more concretely as follows. 
For the following parts of this section, see \cite{ricolfi2023motivic} for detail. 

Let $ (r^2 + rd) $-dimensional affine space $ \Abb $ be defined by 
\[
    \Abb \coloneqq M_r(\Cbb) \times \Hom_\Cbb(\Cbb^d, \Cbb^r). 
\]
A left action on $ \Abb $ of $ \GL_r(\Cbb) $ is defined by $ g \cdot (A, \iota) \coloneqq (gAg^{-1}, g\iota) $ for $ g \in \GL_r(\Cbb) $ and $ (A, \iota) \in \Abb $. 
Let $ U_d^r $ denote the open subscheme of $ \Abb $ which consists of points $ (A, \iota) \in \Abb $ such that $ \sum_{i = 0}^\infty A^i \iota \colon (\Cbb^d)^{\oplus \infty} \to \Cbb^r $ is surjective. 
The set of GIT stable points in $ \Abb $ with respect to the character $ \det \colon \GL_r(\Cbb) \to \Cbb^{\times} $ coincides with $ U_d^r $. 
Then, let $ \GL_r(\Cbb) \backslash U_d^r $ be the GIT quotient. 

\begin{proposition}
    Let $ V $ be a finite-dimensional complex vector space. 
    Then, the analytification of $ \mathrm{Quot}_{\Ocal_{\Abb^1_\Cbb} \otimes V, \Abb_\Cbb^1}^r $ is homeomorphic to $ \Conf_{V, \Cbb}^r $. 
\end{proposition}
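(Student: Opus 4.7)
The plan is to use the concrete description of the Quot scheme recalled just before the statement, namely $\mathrm{Quot}_{\Ocal_{\Abb^1_\Cbb} \otimes V, \Abb^1_\Cbb}^r \cong \GL_r(\Cbb) \backslash U_d^r$ (GIT quotient), where $d = \dim_\Cbb V$, and to compare it termwise with the topological construction of $\Conf_{V, \Cbb}^r = \GL_r(\Cbb) \backslash \widetilde{\Conf}_{V, \Cbb}^r$ from \cref{prop:global-section-bundle}. The comparison has three ingredients: the underlying sets of closed points, the $\GL_r(\Cbb)$-actions, and the passage from the algebraic quotient to the analytic/topological one.

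First I would identify the analytifications of the open subschemes. A closed point of $U_d^r$ is a pair $(A, \iota) \in M_r(\Cbb) \times \Hom_\Cbb(V, \Cbb^r)$ such that $\sum_{i=0}^{\infty} A^i \iota \colon V^{\oplus \infty} \to \Cbb^r$ is surjective. By the Cayley--Hamilton theorem, every $A^i$ with $i \geq r$ is a polynomial in $A$ of degree at most $r-1$, so this condition is equivalent to the surjectivity of the finite sum $\sum_{i=0}^{r-1} A^i \iota$. Since the spectral condition $\sigma(A) \subset \Cbb$ is automatic, this identifies the sets of closed points of $U_d^r$ with $\widetilde{\Conf}_{V, \Cbb}^r$ as subsets of $M_r(\Cbb) \times \Hom_\Cbb(V, \Cbb^r)$. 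Moreover, $U_d^r \subset \Abb$ is the open subscheme defined by the non-vanishing of the maximal minors of the matrix of $\sum_{i=0}^{r-1} A^i \iota$, so its analytification is exactly $\widetilde{\Conf}_{V, \Cbb}^r$ with its subspace topology from $M_r(\Cbb) \times \Hom_\Cbb(V, \Cbb^r)$, and the two $\GL_r(\Cbb)$-actions $g \cdot (A, \iota) = (gAg^{-1}, g\iota)$ literally coincide.

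Finally, I would pass to the quotients. On the topological side, \cref{prop:global-section-bundle} tells us that $\GL_r(\Cbb)$ acts freely and properly on $\widetilde{\Conf}_{V, \Cbb}^r$, so $\Conf_{V, \Cbb}^r$ is a complex manifold and the quotient map is a principal bundle. On the algebraic side, every point of $U_d^r$ is GIT stable with trivial stabilizer (one can exhibit a trivialization directly from the surjection $\sum_{i=0}^{r-1} A^i \iota$), so the GIT quotient $\GL_r(\Cbb) \backslash U_d^r$ is a geometric quotient and the quotient morphism is a principal $\GL_r(\Cbb)$-bundle in the \'etale topology. The main obstacle is then the last step: matching the analytification of this algebraic principal bundle with the topological principal bundle $\widetilde{\Conf}_{V, \Cbb}^r \to \Conf_{V, \Cbb}^r$. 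This is a standard compatibility between analytification and free proper quotients; concretely, local algebraic sections of the principal bundle produce local analytic trivializations, and the induced continuous map from the analytification of the GIT quotient into $\Conf_{V, \Cbb}^r$ is a $\GL_r(\Cbb)$-equivariant bijection of principal bundles with the same base points, hence a homeomorphism. Combining with the scheme-level identification $\mathrm{Quot}_{\Ocal_{\Abb^1_\Cbb} \otimes V, \Abb^1_\Cbb}^r \cong \GL_r(\Cbb) \backslash U_d^r$ from \cite{ricolfi2023motivic} yields the claimed homeomorphism.
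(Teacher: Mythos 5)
Your proposal is correct and follows essentially the same route as the paper: both identify $\mathrm{Quot}_{\Ocal_{\Abb^1_\Cbb}\otimes V, \Abb^1_\Cbb}^r$ with the GIT quotient $\GL_r(\Cbb)\backslash U_d^r$ via \cite{ricolfi2023motivic} and then pass through the \'etale principal $\GL_r(\Cbb)$-bundle structure of $U_d^r \to \GL_r(\Cbb)\backslash U_d^r$ to conclude the analytification is the topological quotient $\Conf_{V,\Cbb}^r$. Your extra remark that Cayley--Hamilton identifies the surjectivity of $\sum_{i\ge 0}A^i\iota$ with that of $\sum_{i=0}^{r-1}A^i\iota$ is a useful spelled-out detail that the paper leaves implicit in the preceding discussion.
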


\begin{proof}
    By \cite[Lemma 2.1]{ricolfi2023motivic}, the quotient map $ U_d^r \to \GL_r(\Cbb) \backslash U_d^r $ is principal $ \GL_r(\Cbb) $ bundle with respect to \'{e}tale topology. 
    Thus, its analytification is also principal $ \GL_r(\Cbb) $ bundle, so the analytification of $ \GL_r(\Cbb) \backslash U_d^r $ is homeomorphic to $ \Conf_{\Cbb^d, \Cbb}^r $. 
    On the other hand, the GIT quotient $ \GL_r(\Cbb) \backslash U_d^r $ is the representation of $ \Qfrak uot_{\Ocal_{\Abb^1_\Cbb}^d, \Abb_\Cbb^1}^r $ by \cite[Proposition 2.3]{ricolfi2023motivic}. 
    From the above, it follows that the analytification of $ \mathrm{Quot}_{\Ocal_{\Abb^1_\Cbb}^d, \Abb_\Cbb^1}^r $ is homeomorphic to $ \Conf_{\Cbb^d, \Cbb}^r $. 
\end{proof}

\section{Construction and continuity of configuration maps}\label{sec:continuity}

\subsection{Definition of configuration maps}\label{subsec:configuration-map}

Let $ V $ be a $ d $-dimensional complex vector space and $ l $ and $ r $ non-negetive integers. 
Also, let $ \gamma $ be a simple closed curve in $ \Cbb $ and $ D_{\gamma} $ the bounded one of the two connected components of $ \Cbb \setminus \gamma $. 

\begin{definition}\label{def:poly}
    Let $ \Poly_{V}^l $ denote the set of $ \End_{\Cbb}(V) $-coefficient polynomials of degree $ l $. 
    We can naturally define the topology of $ \Poly_{V}^l $ from $ \End_{\Cbb}(V)^{\times (l + 1)} $. 
    We write $ \Poly_{V}^{l, inv} $ for the subspace of $ \Poly_{V}^l $ consisting of polynomials whose coefficient of degree $ l $ is invertible. 
    Also, let us denote by $ \Poly_{V, \gamma}^{l, r} $ the subspace of $ \Poly_{V}^l $ consisting of $ f(z) \in \Poly_{V}^l $ which satisfy that
    \begin{enumerate}
        \item for any $ \lambda \in \gamma, f(\lambda) \in \GL_{\Cbb}(V) $ and
        \item the number of roots with multiplicity of $ \det{f(z)} $ included in $ D_{\gamma} $ is $ r $. 
    \end{enumerate}
\end{definition}

To define the configuration map $ \conf_{\Cbb} \colon \Poly_{V}^{l, inv} \to \Conf_{V, \Cbb}^{dl} $, we will prepare some lemmas. 
Take $ f(z) \in \Poly_{V}^{l, inv} $. 
Let $ \Ocal_{\Cbb} $ be the sheaf of holomorphic function on $ \Cbb $ and
$ \Mcal_{\Cbb} $ the sheaf of meromorphic function on $ \Cbb $. 
Let $ \Ocal_{\Cbb}(\textendash; V) $ be the sheaf of $ V $ valued holomorphic function on $ \Cbb $ and 
$ \Mcal_{\Cbb}(\textendash; V) $ the sheaf of $ V $ valued meromorphic function on $ \Cbb $.
Then, $ \Ocal_{\Cbb}(\textendash; V) \cong \Ocal_{\Cbb} \otimes V $ and $ \Mcal_{\Cbb}(\textendash; V) \cong \Mcal_{\Cbb} \otimes V $. 
We define $ \tilde{f} \colon \Ocal_{\Cbb}(\textendash; V) \to \Ocal_{\Cbb}(\textendash; V) $ by $ \tilde{f}(U)(s)(z) \coloneqq f(z)s(z) $ for an open set $ U $ of $ \Cbb $ and a section $ s \in \Ocal_{\Cbb}(U; V) $. 
We can also define the extention $ \tilde{f}^m \colon \Mcal_{\Cbb}(\textendash; V) \to \Mcal_{\Cbb}(\textendash; V) $ of $ \tilde{f} $. 

\begin{lemma}\label{lem:inj}
    The homomorphism $ \tilde{f}^m $ is $ \Mcal_{\Cbb} $-isomorphism. 
    In paticular, $ \tilde{f} $ is injective. 
\end{lemma}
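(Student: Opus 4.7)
The plan is to construct an explicit two-sided $\Mcal_\Cbb$-linear inverse of $\tilde{f}^m$ given by pointwise multiplication by the matrix $f(z)^{-1}$, after first showing this matrix has meromorphic entries; injectivity of $\tilde{f}$ will then follow from the inclusion $\Ocal_\Cbb(\textendash; V) \hookrightarrow \Mcal_\Cbb(\textendash; V)$.

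First I would use that $f \in \Poly_V^{l, inv}$, so its top coefficient $a_l \in \End_\Cbb(V)$ is invertible. Choosing a basis of $V$ to view $f(z)$ as an element of $M_d(\Cbb[z])$ with $d = \dim_\Cbb V$, the leading term of the scalar polynomial $\det f(z)$ is $\det(a_l)\, z^{dl}$, and since $\det a_l \neq 0$, $\det f(z)$ is a nonzero element of $\Cbb[z]$ with only finitely many zeros in $\Cbb$. By Cramer's rule, $f(z)^{-1} = \det(f(z))^{-1} \cdot \mathrm{adj}(f(z))$ has entries in $\Mcal_\Cbb(\Cbb)$, so $f(z)^{-1}$ is a well-defined two-sided inverse of $f(z)$ in the ring $\End_\Cbb(V) \otimes \Mcal_\Cbb(\Cbb)$.

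Next I would define $\tilde{g}^m \colon \Mcal_\Cbb(\textendash; V) \to \Mcal_\Cbb(\textendash; V)$ by $\tilde{g}^m(U)(s)(z) \coloneqq f(z)^{-1} s(z)$ on each open $U \subset \Cbb$; this is meaningful because the product of meromorphic sections is meromorphic, and it is manifestly $\Mcal_\Cbb$-linear. The relations $f(z)^{-1} f(z) = \id_V = f(z) f(z)^{-1}$ in $\End_\Cbb(V) \otimes \Mcal_\Cbb(\Cbb)$ yield $\tilde{g}^m \circ \tilde{f}^m = \id = \tilde{f}^m \circ \tilde{g}^m$, showing that $\tilde{f}^m$ is an $\Mcal_\Cbb$-isomorphism. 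The \emph{in particular} clause then follows: the natural inclusion $\Ocal_\Cbb(\textendash; V) \hookrightarrow \Mcal_\Cbb(\textendash; V)$ intertwines $\tilde{f}$ with $\tilde{f}^m$, so injectivity of the isomorphism $\tilde{f}^m$ forces injectivity of $\tilde{f}$. No step is genuinely difficult; the one piece of bookkeeping that needs care is confirming that $f(z)^{-1}$ has meromorphic entries, which is ensured by $\det f$ being a nonzero polynomial and hence having only discrete zeros.
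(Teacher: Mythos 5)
Your proposal is correct and rests on the same key observation as the paper's proof, namely that the invertibility of the top coefficient forces $\det f(z)$ to be a nonzero polynomial; the paper concludes by checking stalks (each $\tilde f^m_\lambda$ is a linear map over the field $\Mcal_{\Cbb,\lambda}$ with nonzero determinant), whereas you build the global inverse explicitly via Cramer's rule. These are two standard phrasings of the same argument, so no substantive difference.
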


\begin{proof}
    Let $ \lambda \in \Cbb $. 
    The stalk of $ \tilde{f}^m $ is regarded as $ \tilde{f}^m_\lambda \colon \Mcal_{\Cbb, \lambda}^d \to \Mcal_{\Cbb, \lambda}^d $. 
    Since $ f(z) \in \Poly_{V}^{l, inv} $, 
    we can see that the coefficient of maximum degree of $ \det{f(z)} $ is not 0, in paticular, 
    $ \det{\tilde{f}^m_\lambda} \neq 0 $. 
    Then, it follows that $ \tilde{f}^m_\lambda $ is $ \Mcal_{\Cbb, \lambda} $-isomorphism from the fact that $ \Mcal_{\Cbb, \lambda} $ is a field. 
    Therefore, $ \tilde{f}^m $ is $ \Mcal_{\Cbb} $-isomorphism. 
\end{proof}

By \cref{lem:inj}, we get the following short exact sequence:
\[
    \begin{tikzcd}
        0 \ar[r] & \Ocal_{\Cbb}(\textendash; V) \ar[r, "\tilde{f}"] & \Ocal_{\Cbb}(\textendash; V) \ar[r] & \Coker{\tilde{f}} \ar[r] & 0
    \end{tikzcd}
\]

\begin{lemma}\label{lem:support}
    The support of $ \Coker{\tilde{f}} $ is a finite set and $ \dim_{\Cbb}{\Coker{\tilde{f}}_\lambda} < \infty $ for any $ \lambda \in \Cbb $. 
    Furthermore, $ \dim_{\Cbb}{H^0(\Coker{\tilde{f}})} = dl $. 
\end{lemma}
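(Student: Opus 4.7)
The plan is to work stalk by stalk, exploiting that $\Ocal_{\Cbb, \lambda}$ is a discrete valuation ring with uniformizer $z - \lambda$. First I would locate the support. If $\lambda \in \Cbb$ satisfies $\det{f(\lambda)} \neq 0$, then $f^{-1}$ exists as a holomorphic $\End_{\Cbb}(V)$-valued function in a neighborhood of $\lambda$, so the stalk $\tilde{f}_\lambda$ is an isomorphism of $\Ocal_{\Cbb, \lambda}$-modules and $(\Coker \tilde{f})_\lambda = 0$. Hence the support is contained in the zero set of the polynomial $\det f(z)$, which is finite. Moreover, $f \in \Poly_{V}^{l, inv}$ means $a_l \in \GL_{\Cbb}(V)$, so $\det a_l \neq 0$ and $\deg \det f = dl$ exactly.

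Second, I would compute each stalk via Smith normal form. Viewing $f$ as a matrix with entries in the PID $\Ocal_{\Cbb, \lambda}$, we may write $f = P D Q$ with $P, Q \in \GL_d(\Ocal_{\Cbb, \lambda})$ and $D = \mathrm{diag}((z - \lambda)^{n_1}, \ldots, (z - \lambda)^{n_d})$ for nonnegative integers $n_i$; these are all finite because \cref{lem:inj} already rules out $\det f \equiv 0$. Taking cokernels yields
\[
    (\Coker \tilde{f})_\lambda \cong \bigoplus_{i = 1}^{d} \Ocal_{\Cbb, \lambda} / \bigl((z - \lambda)^{n_i}\bigr),
\]
a finite-dimensional $\Cbb$-vector space of dimension $n_1 + \cdots + n_d$. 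Comparing determinants in $f = P D Q$ shows $\det f = u \cdot (z - \lambda)^{n_1 + \cdots + n_d}$ as germs at $\lambda$ with $u$ a unit, whence $n_1 + \cdots + n_d = \mathrm{ord}_\lambda \det f$.

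Third, I would assemble the global dimension. Because $\Coker \tilde{f}$ has finite support and each stalk is annihilated by a power of the maximal ideal at the point supporting it, the sheaf decomposes as a direct sum of skyscraper sheaves at the points of its support. Taking global sections then gives
\[
    \dim_{\Cbb} H^0(\Coker \tilde{f}) = \sum_{\lambda} \dim_{\Cbb} (\Coker \tilde{f})_\lambda = \sum_{\lambda} \mathrm{ord}_\lambda \det f = \deg \det f = dl.
\]

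The main obstacle I expect is the clean identification of $H^0(\Coker \tilde{f})$ with the direct sum of stalks: one needs that a sheaf of $\Ocal_\Cbb$-modules with finite support and finite-dimensional stalks is genuinely a direct sum of skyscraper sheaves, rather than merely agreeing on stalks. The Smith normal form step over a DVR and the determinant count are standard and should go through without trouble.
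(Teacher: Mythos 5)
Your proof is correct and follows essentially the same route as the paper's: both reduce the support to the zero set of $\det f$, apply Smith normal form (invariant factor theory) over the DVR $\Ocal_{\Cbb,\lambda}$ to identify each stalk's dimension with $\mathrm{ord}_\lambda \det f$, and sum to obtain $\deg \det f = dl$. The only difference is that you explicitly flag the skyscraper-decomposition step needed to pass from stalk dimensions to $\dim_\Cbb H^0$, which the paper uses silently.
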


\begin{proof}
    Since $ \det{f(z)} $ has a finite number of roots, 
    there are finitely many $ \lambda $'s in $ \Cbb $ for which $ \tilde{f}_\lambda $ is not invertible. 
    Thus, there are finitely many $ \lambda $'s in $ \Cbb $ for which stalk $ \Coker{\tilde{f}}_\lambda $ is not 0. 
    Let $ \Zcal $ be the zero set of $ \det{f(z)} $. 
    Fix an isomorphism $ V \cong \Cbb^d $. 
    For $ \lambda \in \Zcal $, we can consider $ \tilde{f}_\lambda \in M_d(\Ocal_{\Cbb, \lambda}) $. 
    Since $ \Ocal_{\Cbb, \lambda} $ is a PID and $ \tilde{f}_\lambda $ is injective, by invariant factor theory, there are $ P_\lambda, Q_\lambda \in \GL_d{(\Ocal_{\Cbb, \lambda})} $ and non-negative integers $ e_{\lambda, 1} \leq \dots \leq e_{\lambda, d} $ which satisfy
    \[
        Q_\lambda \tilde{f}_\lambda P_\lambda =
        \begin{pmatrix}
            (z - \lambda)^{e_{\lambda, 1}} & & \\
            & \ddots & \\
            & & (z - \lambda)^{e_{\lambda, d}}
        \end{pmatrix}. 
    \]
    Because $ \det{\tilde{f}} $ and $ \det{\tilde{f}_\lambda} $ are equal up to unit multiplication of $ \Ocal_{\Cbb, \lambda} $, 
    the dimension $ \dim_{\Cbb}{\Coker{\tilde{f}_\lambda}} = e_{\lambda, 1} + \dots + e_{\lambda, d} $ is equal to the multiplicity of the root $ \lambda $ of $ \det{f(z)} $. 
    In paticular, it is finite. 
    Therefore, it follows that 
    \[
        \dim_{\Cbb}{H^0(\Coker{\tilde{f}})} = \sum_{\lambda \in \Zcal} \dim_{\Cbb}{\Coker{\tilde{f}_\lambda}} = \operatorname{deg}{(\det{f(z)})} = dl.
    \] 
\end{proof}

\begin{lemma}\label{lem:surj}
    The morphism $ \pi \colon \Ocal_{\Cbb}(\Cbb; V) \to H^0(\Coker{\tilde{f}}) $ that the canonical projection $ \Ocal_{\Cbb}(\textendash; V) \to \Coker{\tilde{f}} $ induces into $ H^0 $ is surjective. 
\end{lemma}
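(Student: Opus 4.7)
The plan is to extract surjectivity of $\pi$ from the long exact sequence in sheaf cohomology attached to the short exact sequence
\[
0 \to \Ocal_{\Cbb}(\textendash; V) \xrightarrow{\tilde{f}} \Ocal_{\Cbb}(\textendash; V) \to \Coker{\tilde{f}} \to 0
\]
established just before \cref{lem:support}. That sequence yields the exact piece
\[
H^0(\Cbb; \Ocal_{\Cbb}(\textendash; V)) \xrightarrow{\pi} H^0(\Cbb; \Coker{\tilde{f}}) \to H^1(\Cbb; \Ocal_{\Cbb}(\textendash; V)),
\]
so surjectivity of $\pi$ reduces to proving the vanishing $H^1(\Cbb; \Ocal_{\Cbb}(\textendash; V)) = 0$.

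Via the isomorphism $\Ocal_{\Cbb}(\textendash; V) \cong \Ocal_{\Cbb}^{\oplus d}$ this further reduces to $H^1(\Cbb; \Ocal_{\Cbb}) = 0$, which I would invoke as Cartan's Theorem~B for the Stein manifold $\Cbb$. Equivalently, through the Dolbeault isomorphism $H^1(\Cbb; \Ocal_{\Cbb}) \cong H^{0,1}_{\bar{\partial}}(\Cbb)$, the vanishing amounts to the classical fact that the inhomogeneous Cauchy--Riemann equation $\bar{\partial} u = \alpha\, d\bar{z}$ is solvable on $\Cbb$ for every smooth $(0,1)$-form, which is written down explicitly by the Cauchy--Green integral. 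This is the essential analytic input.

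I do not foresee any real obstacle: the lemma is a formal consequence of the short exact sequence and the vanishing $H^1(\Cbb; \Ocal_{\Cbb}) = 0$. For readers preferring a hands-on argument in keeping with the elementary style of the paper, one can argue in parallel: by \cref{lem:support}, $\Coker{\tilde{f}}$ is supported on the finite set $\Zcal$ of roots of $\det{f(z)}$, so $H^0(\Cbb; \Coker{\tilde{f}}) = \bigoplus_{\lambda \in \Zcal} \Coker{\tilde{f}}_{\lambda}$, and lifting a family $(s_\lambda)_{\lambda \in \Zcal}$ of local holomorphic germs to a single element of $\Ocal_{\Cbb}(\Cbb; V)$ modulo $\Im{\tilde{f}}$ at each point is a Mittag-Leffler-type interpolation problem. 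Choosing disjoint small disks $D_\lambda$ and smooth bumps $\chi_\lambda$ with $\chi_\lambda \equiv 1$ near $\lambda$, the smooth function $u \coloneqq \sum_\lambda \chi_\lambda s_\lambda$ satisfies that $\bar{\partial} u$ is compactly supported and vanishes near each $\lambda$, so solving $\bar{\partial} w = \bar{\partial} u$ by the Cauchy--Green integral produces a global holomorphic $V$-valued section $s \coloneqq u - w$ whose germ at each $\lambda$ agrees with $s_\lambda$ up to a holomorphic correction, which can be absorbed into $\Im{\tilde{f}_\lambda}$ using the finite-dimensionality of $\Coker{\tilde{f}}_\lambda$ from the Smith normal form computation already carried out in the proof of \cref{lem:support}. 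Morally this is the same $\bar{\partial}$-solvability statement, just repackaged.
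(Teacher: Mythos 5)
Your primary argument---the long exact cohomology sequence attached to $0 \to \Ocal_{\Cbb}(\textendash; V) \xrightarrow{\tilde{f}} \Ocal_{\Cbb}(\textendash; V) \to \Coker{\tilde{f}} \to 0$ combined with the vanishing $H^1(\Cbb; \Ocal_{\Cbb}) = 0$---is correct, but it is genuinely different from the paper's route. The paper uses no cohomological machinery: it transports the problem along the meromorphic isomorphism $\tilde{f}^m$ of \cref{lem:inj}, so that ``find $s$ whose germ at each $\lambda \in \Zcal$ agrees with $s_\lambda$ modulo $\Im \tilde{f}_\lambda$'' becomes ``find a meromorphic section with prescribed singular parts and pole orders bounded by the Smith-normal-form exponents $e_{\lambda, j}$ from \cref{lem:support}'', and then, since $\Zcal$ is finite, simply sums the finitely many principal parts (polynomials in $(z - \lambda)^{-1}$). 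Your route is shorter and more conceptual but imports Cartan's Theorem B or $\bar{\partial}$-solvability on $\Cbb$; the paper's is longer but stays within the elementary, self-contained framework it advertises, reusing only the invariant-factor computation already done in \cref{lem:support}.

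Your ``hands-on'' alternative, however, has a genuine gap at the final step. After solving $\bar{\partial} w = \bar{\partial} u$, the entire section $s = u - w$ has germ $s_\lambda - w_\lambda$ at $\lambda$, where $w_\lambda$ is merely holomorphic; but you need $w_\lambda \in \Im \tilde{f}_\lambda$, and a generic holomorphic germ is \emph{not} in $\Im \tilde{f}_\lambda$---that is precisely why $\Coker{\tilde{f}}_\lambda \neq 0$. Finite-dimensionality of $\Coker{\tilde{f}}_\lambda$ gives no mechanism for absorbing $w_\lambda$. The repair is to keep the correction inside the subsheaf $\Im \tilde{f}$: since $\bar{\partial} u$ vanishes near $\Zcal$ and $f(z)$ is invertible off $\Zcal$, the form $\beta \coloneqq f(z)^{-1} \bar{\partial} u$ is smooth on all of $\Cbb$; solving $\bar{\partial} v = \beta$ and setting $s \coloneqq u - \tilde{f}(v)$ gives an entire section whose germ at $\lambda$ differs from $s_\lambda$ by $f \cdot v$ with $v$ holomorphic near $\lambda$, hence by an element of $\Im \tilde{f}_\lambda$ as required. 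With that correction your second argument becomes a faithful $\bar{\partial}$-theoretic translation of the paper's Mittag--Leffler step.
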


\begin{proof}
    We use the notations used in the proof of \cref{lem:support}. 
    The strategy is to take pullback of $ \Ocal_{\Cbb}(\Cbb; V) $ along $ \tilde{f}^m(\Cbb) $ and consider it in $ \Mcal_{\Cbb}(\Cbb; V) $. 
    For $ \lambda \in \Zcal $, let $ \Mcal_{\Cbb, \lambda} $-homomorphism $ \psi_\lambda \colon \Mcal_{\Cbb, \lambda} \otimes V \to \Mcal_{\Cbb, \lambda}^d $ be defined as the composition
    \[
        \psi_\lambda \colon \Mcal_{\Cbb, \lambda} \otimes V \underset{\cong}{\to} \Mcal_{\Cbb, \lambda}^d \underset{\cong}{\overset{P_\lambda^{-1}}{\to}} \Mcal_{\Cbb, \lambda}^d. 
    \]
    The first isomorphism is induced by $ V \cong \Cbb^d $. 
    Let $ \Mcal_{\Cbb}(\Cbb; V)^{\tilde{f}} \subset \Mcal_{\Cbb}(\Cbb; V) $ denote the subspace of $ s $'s in $ \Mcal_{\Cbb}(\Cbb; V) $ which satisfy the following conditions:
    \begin{itemize}
        \item $ s $ is holomorphic on $ \Cbb \setminus \Zcal $. 
        \item For each $ \lambda \in \Zcal $ and $ 1 \leq j \leq d $, $ \psi_{\lambda}(s_\lambda)_j $ has a pole of order at most $ e_{\lambda, j} $. 
    \end{itemize}
    Then, it is easy to see that $ \Mcal_{\Cbb}(\Cbb; V)^{\tilde{f}} = \tilde{f}^m(\Cbb)^{-1}(\Ocal_{\Cbb}(\Cbb; V)) $. 
    Therefore, the surjectivity to be shown is reduced to a Mittag-Leffler type extention problem. 
    For each $ \lambda \in \Zcal $, take $ t_\lambda $ as a meromorphic function defined in a neighborhood of $ \lambda $ and suppose that it is holomorphic except for $ \lambda $ and $ \psi_{\lambda}(t_\lambda)_j $ has a pole of order at most $ e_{\lambda, j} $ for $ 1 \leq j \leq d $. 
    We can replace $ t_\lambda $ with one which has same germ and satisfies that each $ \psi_{\lambda}(t_\lambda)_j $ is a polynomial of $ (z - \lambda)^{-1} $. 
    Since $ \Zcal $ is a finite set, the sum $ s \coloneqq \sum_{\lambda \in \Zcal} t_\lambda $ is in $ \Mcal_{\Cbb}(\Cbb; V)^{\tilde{f}} $ and $ s $ and $ t_\lambda $ have same germ at $ \lambda $ for each $ \lambda $. 
\end{proof}

By \cref{lem:surj}, the following sequence is exact and by \cref{lem:support}, we have $ H^0(\Coker{\tilde{f}}) \cong \Cbb^{dl} $. 
Fix an isomorphism$ \varphi \colon H^0(\Coker{\tilde{f}}) \to \Cbb^{dl} $. 
\[
    \begin{tikzcd}
        0 \ar[r] & \Ocal_{\Cbb}(\Cbb; V) \ar[r, "\tilde{f}(\Cbb)"] & \Ocal_{\Cbb}(\Cbb; V) \ar[r, "\pi"] & H^0(\Coker{\tilde{f}}) \ar[r] & 0
    \end{tikzcd}
\]
The homomorphism $ \tilde{f} $ commutes with the multiplication operator $ z $ on $ \Ocal_{\Cbb}(\Cbb; V) $, so $ z \colon \Ocal_{\Cbb}(\Cbb; V) \to \Ocal_{\Cbb}(\Cbb; V) $ induces a homomorphism
\[
    \bar{z} \colon H^0(\Coker{\tilde{f}}) \to H^0(\Coker{\tilde{f}}). 
\]
Define $ A(f) \coloneqq \varphi \circ \bar{z} \circ \varphi^{-1} \in M_r(\Cbb) $. 
Also, define $ \iota(f) \coloneqq \varphi \circ \pi \circ \mathrm{const.} \colon V \to \Cbb^{dl} $ where $ \mathrm{const.} \colon V \to \Ocal_{\Cbb}(\Cbb; V) $ is the map sends to constant functions. 

\begin{definition}
    Let the configuration map $ \conf_{\Cbb} \colon \Poly_{V}^{l, inv} \to \Conf_{V, \Cbb}^{dl} $ be defined by $ \conf_{\Cbb}(f(z)) \coloneqq [A(f), \iota(f)] $. 
\end{definition}

We prepare the following lemmas to show well-definedness of this map. 

\begin{lemma}\label{lem:Chinese}
    Let $ g(z) $ be a non-zero $ \Cbb $-coefficient polynomial. 
    Then, the homomorphism $ \Cbb[z] \to \Ocal_{\Cbb}(\Cbb) / (g(z)) $ defined by the inclusion is surjective. 
\end{lemma}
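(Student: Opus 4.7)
The plan is to show that any holomorphic function $h \in \Ocal_{\Cbb}(\Cbb)$ represents the same class modulo $(g(z))$ as some polynomial. To begin, I would factor $g(z) = c \prod_{i=1}^{k} (z - \lambda_i)^{m_i}$ into linear factors, where $\lambda_1, \ldots, \lambda_k \in \Cbb$ are the distinct roots and $c \in \Cbb^\times$.

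The core step is Hermite interpolation, which follows from the Chinese Remainder Theorem applied in $\Cbb[z]$. Since the factors $(z-\lambda_i)^{m_i}$ are pairwise coprime, the natural map
\[
    \Cbb[z] \to \prod_{i=1}^{k} \Cbb[z] / ((z - \lambda_i)^{m_i})
\]
is surjective, and each factor on the right is identified via Taylor expansion with the space of polynomials in $(z - \lambda_i)$ of degree less than $m_i$. Given $h \in \Ocal_\Cbb(\Cbb)$, its Taylor polynomial at each $\lambda_i$ up to order $m_i - 1$ determines an element of the right-hand side, and by the surjectivity above I can find a polynomial $p \in \Cbb[z]$ (which may be chosen of degree less than $\deg g$) whose Taylor expansion at each $\lambda_i$ coincides with that of $h$ up to order $m_i - 1$.

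Then $h - p$ has a zero of order at least $m_i$ at each $\lambda_i$. Since $g(z)$ has zeros of order exactly $m_i$ at the $\lambda_i$ and no other zeros, the meromorphic function $(h - p)/g(z)$ has only removable singularities on $\Cbb$, so by Riemann's removable singularity theorem it extends to an element $h' \in \Ocal_\Cbb(\Cbb)$. Thus $h = p + g \cdot h'$, i.e., $h \equiv p$ in $\Ocal_\Cbb(\Cbb)/(g(z))$, proving surjectivity.

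I do not foresee a real obstacle; the argument is a standard combination of the Chinese Remainder Theorem (Hermite interpolation) in $\Cbb[z]$ with Riemann's removable singularity theorem. The only point where one might pause is checking that matching Taylor coefficients to order $m_i - 1$ really forces the quotient to be holomorphic, but this is immediate once the orders of vanishing are tracked.
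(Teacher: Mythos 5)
Your proof is correct and rests on the same ingredients as the paper's: factoring $g$, the Chinese Remainder Theorem (Hermite interpolation), and Riemann's removable singularity theorem to conclude that $h-p$ is divisible by $g$ in $\Ocal_{\Cbb}(\Cbb)$. The paper packages this slightly more abstractly, applying CRT in $\Ocal_{\Cbb}(\Cbb)$ itself to identify $\Cbb[z]/(g(z))$ with $\Ocal_{\Cbb}(\Cbb)/(g(z))$ factor by factor, but the mathematical content is the same.
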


\begin{proof}
    Factorize as $ g(z) = a_0 \prod_{i = 1}^k(z - \lambda_i)^{m_i} $. 
    Suppose $ \{\lambda_i\}_i $ are mutually distinct. 
    By Riemann's removable singularity theorem, $ (z - \lambda_i) $ is a maximal ideal of $ \Ocal_\Cbb(\Cbb) $. 
    Thus, by Chinese remainder theorem, we have 
    \[
        \Ocal_{\Cbb}(\Cbb) / (g(z)) \cong \Ocal_{\Cbb}(\Cbb) / \prod_{i = 1}^k((z - \lambda_i))^{m_i} \cong \prod_{i = 1}^k \Ocal_{\Cbb}(\Cbb) / ((z - \lambda_i))^{m_i}. 
    \]
    Also, we have 
    \begin{align*}
        \Cbb[z] / (g(z)) \cong \Cbb[z] / \prod_{i = 1}^k((z - \lambda_i))^{m_i} \cong \prod_{i = 1}^k \Cbb[z] / ((z - \lambda_i))^{m_i}. 
    \end{align*}
    Since for each $ i $, it follows that 
    \[
        \Ocal_{\Cbb}(\Cbb) / ((z - \lambda_i))^{m_i} \cong \bigoplus_{j = 0}^{m_i - 1} \Cbb \cdot (z - \lambda_i)^j \cong \Cbb[z] / ((z - \lambda_i))^{m_i}, 
    \]
    the homomorphism $ \Cbb[z] / (g(z)) \to \Ocal_\Cbb(\Cbb) / (g(z)) $ induced by inclusion is isomorphism. 
    Therefore, $ \Cbb[z] \to \Cbb[z] / (g(z)) \to \Ocal_\Cbb(\Cbb) / (g(z)) $ is surjective. 
\end{proof}

\begin{lemma}
    For $ f(z) \in \Poly_{V}^{l, inv} $, $ [A(f), \iota(f)] \in \Conf_{V, \Cbb}^{dl} $. 
\end{lemma}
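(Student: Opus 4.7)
The plan is to verify the two defining conditions of $\widetilde{\Conf}_{V, \Cbb}^{dl}$ for the pair $(A(f), \iota(f))$: the spectral condition $\sigma(A(f)) \subset \Cbb$ is automatic since the open set is all of $\Cbb$, so the entire content lies in the surjectivity of $\sum_{i = 0}^{dl - 1} A(f)^i \iota(f) \colon V^{\oplus dl} \to \Cbb^{dl}$. Unwinding the definitions, $A(f)^i \iota(f)(v) = \varphi \circ \pi(z^i v)$, so this map is exactly $\varphi \circ \pi$ applied to the polynomial $\sum_{i = 0}^{dl - 1} z^i v_i \in V[z] \subset \Ocal_\Cbb(\Cbb; V)$. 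The question thus becomes: do polynomials in $V[z]$ of degree less than $dl$ surject, through $\pi$, onto $H^0(\Coker \tilde f)$?

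To address this, I will compare the polynomial quotient $V[z] / \tilde{f}(V[z])$ with the holomorphic quotient $H^0(\Coker \tilde{f})$. First, since the leading coefficient $a_l$ is invertible, Euclidean division of polynomials goes through: every $p \in V[z]$ can be written as $p = f q + r$ with $q, r \in V[z]$ and $\deg r < l$, and the degree identity $\deg \tilde{f}(q) = l + \deg q$ for $q \neq 0$ (again using invertibility of $a_l$) gives uniqueness of $r$. Hence $\dim_{\Cbb} V[z] / \tilde{f}(V[z]) = dl$, with polynomials of degree $< l$ as a canonical set of representatives.

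Next I will show that the natural map $V[z] / \tilde{f}(V[z]) \to H^0(\Coker \tilde{f})$ is injective. Suppose $p \in V[z]$ lies in $\tilde{f}(\Ocal_\Cbb(\Cbb; V))$, that is, $p = f s$ for some entire $V$-valued $s$. Because $a_l$ is invertible, $f(z)^{-1}$ is meromorphic on $\Cbb$ with $\norm{f(z)^{-1}} = O(\abs{z}^{-l})$ as $\abs{z} \to \infty$, whence $\norm{s(z)} = \norm{f(z)^{-1} p(z)} = O(\abs{z}^{\deg p - l})$ for large $\abs{z}$. Liouville's theorem then forces $s$ to be a polynomial, so $p \in \tilde{f}(V[z])$ as required.

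Combining this injection with the dimension equality $\dim_\Cbb H^0(\Coker \tilde{f}) = dl$ of \cref{lem:support} yields an isomorphism $V[z] / \tilde{f}(V[z]) \cong H^0(\Coker \tilde{f})$. In particular, polynomials of degree $< l$ already surject onto $H^0(\Coker \tilde{f})$ through $\pi$, and a fortiori so do polynomials of degree $< dl$; applying $\varphi$ gives the required surjectivity of $\sum_{i = 0}^{dl - 1} A(f)^i \iota(f)$. I expect the injectivity step to be the only real obstacle, since one must pass from an arbitrary entire solution of $f s = p$ back to a polynomial solution; the Liouville-type growth estimate is the cleanest tool. The remainder is bookkeeping with polynomial division together with \cref{lem:support,lem:surj}.
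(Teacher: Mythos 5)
Your proof is correct, but it takes a genuinely different route from the paper's. The paper reduces to surjectivity of $\Cbb[z] \otimes V \to H^0(\Coker\tilde{f})$ via Cayley--Hamilton applied to the characteristic polynomial $g$ of $\bar{z}$, and then establishes that surjectivity through \cref{lem:Chinese} (Chinese remainder theorem plus Riemann's removable singularity theorem) together with the $\Ocal_\Cbb(\Cbb)/(g(z))$-module structure on $H^0(\Coker\tilde{f})$. You instead prove an isomorphism $V[z]/\tilde{f}(V[z]) \cong H^0(\Coker\tilde{f})$ directly: Euclidean division against the invertible leading coefficient $a_l$ gives $\dim_\Cbb V[z]/\tilde{f}(V[z]) = dl$ with remainders of degree $<l$ as representatives; injectivity of the comparison map comes from the growth estimate $\norm{f(z)^{-1}} = O(\abs{z}^{-l})$ and generalized Liouville, which shows $V[z] \cap \tilde{f}(\Ocal_\Cbb(\Cbb;V)) = \tilde{f}(V[z])$; and the dimension count of \cref{lem:support} upgrades the injection to an isomorphism. (You do rely on the exactness of the global-sections sequence to identify $\Ker\pi$ with $\tilde{f}(\Ocal_\Cbb(\Cbb;V))$, but only the left-exact part, which is automatic.) Your argument buys a sharper conclusion --- polynomials of degree $<l$ already surject, not merely degree $<dl$ --- and an explicit model of $H^0(\Coker\tilde{f})$ as the polynomial quotient; the paper's argument stays purely algebraic/sheaf-theoretic and avoids growth estimates at infinity, which is closer in spirit to the Quot-scheme formulation used elsewhere in the proof of \cref{thm:coker-conti1}. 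Both hinge on \cref{lem:support} for the number $dl$.
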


\begin{proof}
    We need to show the surjectivity of $ \sum_{i = 0}^{dl - 1} A(f)^i \iota(f) \colon V^{\oplus dl} \to \Cbb^{dl} $. 
    To show this, it is sufficient to show that the composition
    \begin{equation}\label{eq:surj}
        \left\{\sum_{i = 0}^{dl - 1} a_i z^i \setmid a_i \in V\right\} \to \Ocal_{\Cbb}(\Cbb; V) \to H^0(\Coker{\tilde{f}}) 
    \end{equation}
    is surjective. 
    Let $ g(z) $ be the characteristic polynomial of $ \bar{z} $. 
    Since $ \deg{(g(z))} = dl $, 
    the surjectivity of (\ref{eq:surj}) follows if the composition
    \begin{equation}
        \Cbb[z] \otimes V \to \Ocal_{\Cbb}(\Cbb; V) \to H^0(\Coker{\tilde{f}})
    \end{equation}
    is surjective. 
    This follows from \cref{lem:Chinese} and the fact that $ H^0(\Coker{\tilde{f}}) $ is an $ \Ocal_{\Cbb}(\Cbb) / (g(z)) $-module. 
\end{proof}

It is easy to see that $ [A(f), \iota(f)] \in \Conf_{V, \Cbb}^{dl} $ does not depend on the choice of isomorphism $ \varphi \colon H^0(\Coker{\tilde{f}}) \to \Cbb^{dl} $. 

Also, we define the configuration map $ \conf_{D_\gamma} \colon \Poly_{V, \gamma}^{l, r} \to \Conf_{V, D_{\gamma}}^r $ in the same way as $ \conf_{\Cbb} $. 
Here, $ \Ocal_{D_\gamma}(\textendash; V) \coloneqq \rest{\Ocal_{\Cbb}(\textendash; V)}{D_\gamma} $ is used instead of $ \Ocal_{\Cbb}(\textendash; V) $. 

\subsection{Continuity of configuration maps}

The aim in this section is to prove the following two proposition. 

\begin{theorem}\label{thm:coker-conti1}
    The configuration map $ \conf_{\Cbb} \colon \Poly_{V}^{l, inv} \to \Conf_{V, \Cbb}^{dl} $ is continuous.  
\end{theorem}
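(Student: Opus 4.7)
The plan is to realise $\conf_{\Cbb}$ as the analytification of a morphism of schemes, using the comparison with Quot schemes established in the previous subsection. Regard $\Poly_V^l$ as the analytification of the affine space $\End_\Cbb(V)^{l+1}$, and $\Poly_V^{l, inv}$ as the analytification of the open subscheme where the top coefficient $a_l$ is invertible. On $\Abb^1_\Cbb \times \Poly_V^{l, inv}$ the tautological polynomial $F(z) = \sum_{j=0}^l \mathbf{a}_j z^j$ determines an $\Ocal$-linear homomorphism $\tilde F \colon \Ocal^d \to \Ocal^d$ (after fixing $V \cong \Cbb^d$), and I will form the cokernel $\Fcal \coloneqq \Coker \tilde F$.

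The first step is to check that $(\Fcal, \tilde F)$ defines a family of quotients in the sense of the definition of $\Qfrak uot_{\Ocal^d_{\Abb^1_\Cbb}, \Abb^1_\Cbb}^{dl}$. Properness of the schematic support of $\Fcal$ over $\Poly_V^{l, inv}$ holds because that support is cut out by $\det F(z)$, whose leading coefficient $\det(\mathbf{a}_l)$ is a unit on $\Poly_V^{l, inv}$, so the support is finite over the base. Flatness over the base, together with the constancy of the fibrewise Hilbert polynomial equal to $dl$, both follow from the division algorithm: locally in the base ring $B$ the pushforward of $\Fcal$ is $B[z]^d / F \cdot B[z]^d$, and since $\mathbf{a}_l \in \GL_d(B)$ we can divide any element of $B[z]^d$ on the left by $F$, so the quotient is free over $B$ with basis $\{z^i \otimes e_j : 0 \le i \le l-1,\ 1 \le j \le d\}$.

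By the representability of $\Qfrak uot^{dl}_{\Ocal^d_{\Abb^1_\Cbb}, \Abb^1_\Cbb}$, this family of quotients gives a morphism of schemes $\Psi \colon \Poly_V^{l, inv} \to \mathrm{Quot}^{dl}_{\Ocal^d_{\Abb^1_\Cbb}, \Abb^1_\Cbb}$. Analytification preserves continuity, and by the preceding proposition the analytification of the target is homeomorphic to $\Conf_{V, \Cbb}^{dl}$. Under the explicit presentation of the Quot scheme as the GIT quotient $\GL_r(\Cbb) \backslash U_d^r$, the pair $(A, \iota)$ attached to a closed point is precisely the action of $z$ on the global sections of the cokernel together with the image of the constant sections. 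This matches the definition of $A(f)$ and $\iota(f)$ up to the choice of the auxiliary isomorphism $\varphi$, so the analytification of $\Psi$ equals $\conf_\Cbb$, proving continuity.

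The main obstacle will be the flatness and freeness step: although it reduces to the familiar division algorithm for monic-up-to-unit polynomials, one has to keep careful track of the pushforward from $\Abb^1 \times \Poly_V^{l, inv}$ to $\Poly_V^{l, inv}$ and verify the base-change identification of the fibre with $H^0(\Coker \tilde f)$ used to recognise $\Psi^{an}$ as $\conf_\Cbb$. Once freeness is in hand, the remaining verifications (properness of support, matching with $(A(f), \iota(f))$) are short.
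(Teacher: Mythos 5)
Your proposal follows essentially the same route as the paper: the paper introduces the coordinate ring $P_d^l$ of the scheme underlying $\Poly_{\Cbb^d}^{l,inv}$, forms the tautological polynomial $F(z)$ and the cokernel of $\tilde F$, verifies in \cref{prop:Quot} that this is a family of quotients, and obtains $\conf_\Cbb$ as the analytification of the resulting morphism to $\mathrm{Quot}^{dl}_{\Ocal^d_{\Abb^1_\Cbb},\Abb^1_\Cbb}$. The only substantive difference is the flatness step: you exhibit an explicit free basis $\{z^i\otimes e_j:0\le i\le l-1\}$ of the pushforward via the division algorithm for polynomials with invertible leading coefficient, whereas the paper deduces local freeness from constancy of the fibre rank over a reduced noetherian base (Hartshorne, Exercise II.5.8); both are valid, and your version is somewhat more self-contained and also yields the constancy of the Hilbert polynomial for free.
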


\begin{theorem}\label{thm:coker-conti2}
    Let $ \gamma $ is a simple closed curve in $ \Cbb $. 
    Then, the configuration map $ \conf_{D_\gamma} \colon \Poly_{V, \gamma}^{l, r} \to \Conf_{V, D_\gamma}^r $ is continuous. 
\end{theorem}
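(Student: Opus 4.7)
The plan is to reduce the claim to \cref{thm:coker-conti1} (continuity of $\conf_{\Cbb}$) together with \cref{lem:proj-conti} (continuity of $\res_{D_\gamma}$), by means of a local M\"obius change of coordinates that brings a given $f_0 \in \Poly_{V, \gamma}^{l, r}$ into the regime $\Poly_V^{l, inv}$ where the leading coefficient is invertible. Fix $f_0 \in \Poly_{V, \gamma}^{l, r}$. Since $\det f_0$ has only finitely many zeros, we may choose $p_0 \in \Cbb P^1 \setminus \bar{D_\gamma}$ with $f_0(p_0) \in \GL_\Cbb(V)$, interpreting $f_0(\infty)$ as the leading coefficient: if the leading coefficient is invertible take $p_0 = \infty$, otherwise take any $p_0$ in $\Cbb \setminus \bar{D_\gamma}$ avoiding the zero set of $\det f_0$. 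The set $U_0 := \{f \in \Poly_{V, \gamma}^{l, r} : f(p_0) \in \GL_\Cbb(V)\}$ is an open neighborhood of $f_0$, so it suffices to prove continuity on $U_0$.

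I would treat first the case $p_0 = \infty$, i.e.\ $U_0 \subset \Poly_V^{l, inv}$. For $f \in U_0$, $\det f$ has degree $dl$ with $r$ zeros in $D_\gamma$ and $dl - r$ in $\Cbb \setminus \bar{D_\gamma}$; since $\Coker \tilde f$ is supported on these zeros, the stalk decomposition yields
\[
    H^0(\Coker \tilde{f}) = H^0(\Coker \rest{\tilde f}{D_\gamma}) \oplus H^0(\Coker \rest{\tilde f}{\Cbb \setminus \bar{D_\gamma}}).
\]
A comparison of the first projection with the generalized-eigenspace decomposition used to construct $\res_{D_\gamma}$ in \cref{subsec:restriction} would then yield the identity $\conf_{D_\gamma}(f) = \res_{D_\gamma}(\conf_\Cbb(f))$ on $U_0$, which is continuous by \cref{thm:coker-conti1} and \cref{lem:proj-conti}.

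For the remaining case $p_0 \in \Cbb \setminus \bar{D_\gamma}$, I would introduce the continuous map $T_{p_0} \colon U_0 \to \Poly_V^l$ defined by $T_{p_0}(f)(w) := w^l f(p_0 + w^{-1})$. Its leading coefficient in $w$ is $f(p_0)$, invertible on $U_0$, so $T_{p_0}(f) \in \Poly_V^{l, inv}$. The substitution $w = \mobius_{p_0}(z)$ sends $D_\gamma$ onto $D_{\mobius_{p_0}(\gamma)}$ and sets up a multiplicity-preserving bijection between the zeros of $\det f$ in $\Cbb$ and the nonzero zeros of $\det T_{p_0}(f)$; the extra zero $w = 0$ corresponds to $z = \infty \notin \bar{D_\gamma}$, so it lies outside $\bar{D_{\mobius_{p_0}(\gamma)}}$. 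Hence $T_{p_0}(f) \in \Poly_V^{l, inv} \cap \Poly_{V, \mobius_{p_0}(\gamma)}^{l, r}$, and applying the previous case to $T_{p_0}(f)$ and post-composing with the M\"obius-induced homeomorphism $\Conf_{V, D_{\mobius_{p_0}(\gamma)}}^r \cong \Conf_{V, D_\gamma}^r$ built into $\Conf_{V, \Cbb P^1}^r$ in \cref{subsec:construction} produces a continuous candidate for $\conf_{D_\gamma}$ on $U_0$.

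The main obstacle is verifying that this candidate indeed equals $\conf_{D_\gamma}$, i.e.\ that the cokernel construction is natural under the M\"obius change of coordinates. Pulling back $\widetilde{T_{p_0}(f)}$ along $\mobius_{p_0} \colon D_\gamma \xrightarrow{\sim} D_{\mobius_{p_0}(\gamma)}$ gives multiplication by $(z - p_0)^{-l} f(z)$ on $\rest{\Ocal_\Cbb(\textendash; V)}{D_\gamma}$, and since $(z - p_0)^{-l}$ is a unit on $D_\gamma$ its cokernel is canonically isomorphic to $\Coker \rest{\tilde f}{D_\gamma}$. Under this isomorphism, multiplication by $w$ becomes multiplication by $(z - p_0)^{-1}$ and constant $w$-sections correspond to constant $z$-sections, so $(A(T_{p_0}(f)), \iota(T_{p_0}(f)))$ corresponds to $(\mobius_{p_0}(A(f)), \iota(f))$, which is precisely the relation imposed by the M\"obius identification in \cref{subsec:construction}. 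This matches the two configurations, and the proof is complete.
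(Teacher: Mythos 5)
Your proof is correct and is essentially the paper's argument reorganized: both reduce to \cref{thm:coker-conti1} and \cref{lem:proj-conti} via the same Möbius substitution, but where the paper first assembles the global map $\conf_{\Cbb P^1}$ on $\Poly_V^l \setminus \{0\}$ by gluing the charts $\conf_{\Cbb P^1 \setminus \{p\}}$ via \cref{lem:gluing} and then restricts and composes with $\res_{D_\gamma}$, you work locally on the cover $\{U_{p_0}\}$ and check chart by chart that $\conf_{D_\gamma}$ equals the visibly continuous composite. Your final paragraph is, in effect, a direct verification of what \cref{lem:gluing} packages, and your stalk-decomposition step spells out the identity $\conf_{D_\gamma} = \res_{D_\gamma} \circ \conf_{\Cbb P^1 \setminus \gamma}$ that the paper records only as ``by construction.''
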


These theorem will be proved in a way of algebraic geometry. 
First, let us prepare the scheme corresponding to $ \Poly_V^{l, inv} $. 

\begin{definition}
    Let a commutative ring $ P_d^l $ be defined by 
    \[
        P_d^l \coloneqq \Cbb[\{X_{ij}^{(k)}\}_{1 \leq i, j \leq d, 0 \leq k \leq l}, T]/(T \cdot \det(X_{ij}^{(l)}) - 1). 
    \]
\end{definition}

The analytification of $ \Spec{P_d^l} $ is homeomorphic to $ \Poly_{\Cbb^d}^{l, inv} $. 
To prove \cref{thm:coker-conti1}, we will construct the element of $ \Qfrak uot_{\Ocal_{\Abb_\Cbb^1}^d, \Abb_\Cbb^1}^r(\Spec{P_d^l}) $ which corresponds to $ \conf_{\Cbb} $. 
Then, it will give a morphism of schemes $ \Spec{P_d^l} \to \mathrm{Quot}_{\Ocal_{\Abb_\Cbb^1}^d, \Abb_\Cbb^1}^r $ and we will get a continuous map $ \conf_{\Cbb} \colon \Poly_{\Cbb^d}^{l, inv} \to \Conf_{\Cbb^d, \Cbb}^r $ by taking analytification. 

Let $ M_d(P_d^l) $-coefficient polynomial $ F(z) $ of order $ l $ whose coefficient of order $ l $ is invertible be defined by
\[
    F(z) \coloneqq (X_{ij}^{(l)})z^l + (X_{ij}^{(l - 1)})z^{l - 1} + \dots + (X_{ij}^{(0)}). 
\]
The polynomial $ F(z) $ defines a sheaf homomorphism $ \tilde{F} \colon \Ocal_{\Abb_{P_d^l}^1}^d \to \Ocal_{\Abb_{P_d^l}^1}^d $. 
Since $ \tilde{F} $ is injective, taking cokernel, we get a short exact sequence
\[
    \begin{tikzcd}
        0 \ar[r] & \Ocal_{\Abb_{P_d^l}^1}^d \ar[r, "\tilde{F}"] & \Ocal_{\Abb_{P_d^l}^1}^d \ar[r, "q"] & \Coker{\tilde{F}} \ar[r] & 0
    \end{tikzcd}. 
\]

\begin{proposition}\label{prop:Quot}
    We have $ [\Coker{\tilde{F}}, q] \in \Qfrak uot_{\Ocal_{\Abb_\Cbb^1}^d, \Abb_\Cbb^1}^{dl}(\Spec{P_d^l}) $.
\end{proposition}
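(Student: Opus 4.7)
The plan is to verify the three conditions in the definition of $\Qfrak uot_{\Ocal_{\Abb^1_\Cbb}^d, \Abb^1_\Cbb}^{dl}(\Spec P_d^l)$ by reducing everything to a single structural fact: $\Coker \tilde F$, viewed as a coherent sheaf of $\Ocal_{\Spec P_d^l}$-modules via the projection $\Abb^1_{P_d^l} \to \Spec P_d^l$, is locally free of rank $dl$.

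First I would set $R \coloneqq P_d^l$ and identify $\Coker \tilde F$ with the $R[z]$-module $M \coloneqq R[z]^d / F(z) R[z]^d$. The key observation is that the matrix of leading coefficients $(X_{ij}^{(l)})$ is invertible in $M_d(R)$, by construction of $R$ (this is exactly the role of the auxiliary variable $T$). Therefore one has a left Euclidean division of column vectors in $R[z]^d$: for every $h(z) \in R[z]^d$ with $\deg h = n \geq l$ and leading coefficient $a \in R^d$, subtracting $F(z) \cdot ((X_{ij}^{(l)})^{-1} a) \, z^{n-l}$ cancels the leading term and strictly decreases the $z$-degree. Iterating produces a decomposition $h(z) = F(z) g(z) + r(z)$ with $\deg r < l$; uniqueness follows because when $g \neq 0$, the product $F(z) g(z)$ has $z$-degree exactly $l + \deg g \geq l$, contradicting $\deg r < l$. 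Consequently $M$ is $R$-free with basis $\{z^i e_j \mid 0 \leq i < l,\ 1 \leq j \leq d\}$, hence of rank $dl$ over $R$.

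From this the three conditions follow with little further work. Flatness of $\Coker \tilde F$ over $\Spec R$ is immediate since $M$ is $R$-free. For the proper-support condition, Cayley--Hamilton applied to the $R$-linear endomorphism of $M$ given by multiplication by $z$ yields a monic polynomial of degree $dl$ in $R[z]$ annihilating $M$; hence the schematic support of $\Coker \tilde F$ is a closed subscheme of a finite (in particular proper) $\Spec R$-scheme. For the Hilbert-polynomial condition, the fiber at $t \in \Spec R$ is
\[
    M \otimes_R \kappa(t) \;=\; \kappa(t)[z]^d / F_t(z) \kappa(t)[z]^d,
\]
which by the same division argument applied over $\kappa(t)$ is a $\kappa(t)$-vector space of dimension $dl$; its support on $\Abb^1_{\kappa(t)}$ is the zero locus of $\det F_t(z)$, which is zero-dimensional since $\det X^{(l)}(t) \neq 0$. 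The Hilbert polynomial of such a sheaf is the constant equal to its length, namely $dl$.

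I expect no essential obstacle: the entire argument turns on the invertibility of $X^{(l)}$ over $R$, which makes the Euclidean division available and thereby gives $M$ the structure of a free $R$-module of the correct rank. The only step requiring a moment of care is the uniqueness in the division, but that reduces to a direct comparison of leading $z$-degrees.
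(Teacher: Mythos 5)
Your proof is correct, and it takes a genuinely more self-contained route than the paper at the crucial step, which is flatness. The paper first establishes that the fiber rank of $p_*(\Coker\tilde F)$ is the constant $dl$ and then invokes the criterion that a coherent sheaf of constant fiber rank on a reduced noetherian scheme is locally free (Hartshorne, Exercise II.5.8); this silently uses that $P_d^l$ is reduced (it is, being a localization of a polynomial ring). You instead prove outright that $M = P_d^l[z]^d / F(z) P_d^l[z]^d$ is \emph{free} over $P_d^l$ with basis $\{z^i e_j\}_{0 \leq i < l,\, 1 \leq j \leq d}$, via Euclidean division against $F(z)$ made possible by the invertibility of the leading coefficient matrix $(X_{ij}^{(l)})$ --- which is precisely what the generator $T$ in $P_d^l$ is there to guarantee. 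This buys you flatness for free, works over any base ring (no reducedness needed), and simultaneously delivers the constant fiber dimension $dl$ used in the Hilbert-polynomial condition, since the same division applies over each residue field $\kappa(t)$. For properness of the support, you use Cayley--Hamilton for multiplication by $z$ on the free module $M$, whereas the paper uses the adjugate identity to see that the support lies in $\Spec(P_d^l[z]/(\det F(z)))$; both yield a monic (up to a unit) polynomial of degree $dl$ annihilating $M$ and hence a finite, in particular proper, subscheme of $\Abb^1_{P_d^l}$ containing the support. The one step you rightly flag --- uniqueness in the division --- is handled correctly by comparing leading terms, since $(X_{ij}^{(l)}) c \neq 0$ for any nonzero $c$. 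In short: same underlying mechanism (invertibility of the top coefficient), but your argument is more elementary and arguably cleaner than the paper's appeal to the locally-free criterion.
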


\begin{proof}
    First, we show that the schematic support of $ \Coker{\tilde{F}} $ is proper over $ \Spec{P_d^l} $. 
    The support of $ \Coker{\tilde{F}} $ is contained in $ \Spec(P_d^l[z]/(\det{F(z)})) $. 
    Since $ P_d^l[z] /(\det(F(z))) $ is a finitely generated $ P_d^l $-module, the support of $ \Coker{\tilde{F}} $ is proper over $ \Spec{P_d^l} $. 

    Next, we show that $ \Coker{\tilde{F}} $ is flat over $ \Spec{P_d^l} $. 
    Let $ p \colon \Abb_{P_d^l} \to \Spec{P_d^l} $. 
    The coherent sheaf $ p_*(\Coker{\tilde{F}}) $ is locally free because $ \Spec{P_d^l} $ is noetherian and reduced and the rank of each stalk is a constant $ dl $, independent of points in $ \Spec{P_d^l} $\cite[Exercise II.5.8]{hartshorne1977algebraic}. 
    In paticular, $ \Coker{\tilde{F}} $ is flat over $ \Spec{P_d^l} $. 
    
    Finally, we show that the Hilbert polynomial of $ (\Coker{\tilde{F}})_f $ is constant $ dl $ for $ f \in \Spec{P_d^l} $. 
    This follows from the fact that the degree of $ F(z) $ is $ dl $. 
\end{proof}

\begin{proof}[Proof of \cref{thm:coker-conti1}]
    By \cref{prop:Quot}, we can define a morphism of schemes
    \[
        \Spec{P_d^l} \to \mathrm{Quot}_{\Ocal_{\Abb^1_\Cbb}^d, \Abb_\Cbb^1}^{dl}.
    \]
    By taking analytification, we get a continuous map
    \[
        \Poly_{\Cbb^d}^{l, inv} \to \Conf_{\Cbb^d, \Cbb}^{dl}
    \]
    and by construction, this coincides with the map $ \conf_{\Cbb} \colon \Poly_{\Cbb^d}^{l, inv} \to \Conf_{\Cbb^d, \Cbb}^{dl} $. 
\end{proof}

Next, we will define a continuous map $ \conf_{\Cbb P^1} \colon (\Poly_{V}^l) \setminus \{0\} \to \Conf_{V, \Cbb P^1}^{dl} $. 
For $ p \in \Cbb $, Let a subspace $ \Poly_{V, \Cbb P^1 \setminus \{p\}}^l \subset \Poly_{V}^l $ be defined by
\[
    \Poly_{V, \Cbb P^1 \setminus \{p\}}^l \coloneqq \{f(z) \in \Poly_{V}^l \mid f(p) \text{ is invertible. }\}. 
\]
Also, Let us define 
\[
    \Poly_{V, \Cbb P^1 \setminus \{\infty\}}^l \coloneqq \Poly_{V}^{l, inv}. 
\]
Then, 
\[
    (\Poly_{V}^l) \setminus \{0\} = \Poly_{V, \Cbb P^1 \setminus \{\infty\}}^l \cup \bigcup_{p \in \Cbb} \Poly_{V, \Cbb P^1 \setminus \{p\}}^l. 
\]
For $ p \in \Cbb P^1 $, we define $ \conf_{\Cbb P^1 \setminus \{p\}} \colon \Poly_{V, \Cbb P^1 \setminus \{p\}}^l \to {}^p\Conf_{V, \Cbb}^{dl} $ as follows:

\begin{itemize}
    \item If $ p = \infty $, define
    \[
        \conf_{\Cbb P^1 \setminus \{\infty\}} \coloneqq \conf_{\Cbb} \colon \Poly_{V, \Cbb P^1 \setminus \{\infty\}}^l = \Poly_{V}^{l, inv} \to {}^\infty\Conf_{V, \Cbb}^{dl}. 
    \]
    \item If $ p \in \Cbb $, for $ f(z) \in \Poly_{V, \Cbb P^1 \setminus \{p\}}^l $ we define
    \[
        \conf_{\Cbb P^1 \setminus \{p\}}(f(z)) \coloneqq \conf_{\Cbb}\left((-z)^lf\left(\mobius_p^{-1}(z)\right)\right) \in \Conf_{V, \Cbb}^{dl} = {}^p\Conf_{V, \Cbb}^{dl}. 
    \]
\end{itemize}

We can check well-definedness of $ \conf_{\Cbb P^1 \setminus \{p\}} $ for $ p \in \Cbb $ as follows:
For $ f(z) = \sum_{j = 0}^l a_j z^j $, 
\[
    (-z)^lf\left(\mobius_p^{-1}(z)\right) = (-z)^lf\left(\frac{1}{z} + p\right) = (-1)^l\sum_{j = 0}^l a_j z^{l - j}(pz + 1)^j, 
\]
so $ (-z)^lf\left(\mobius_p^{-1}(z)\right) \in \Poly_{V}^l $ and the coefficient of degree $ l $ is $ (-1)^lf(p) $, in paticular, invertible. 
By the following \cref{lem:gluing}, $ \{\conf_{\Cbb P^1 \setminus \{p\}}\}_{p \in \Cbb P^1} $ are glued together and define a continuous map $ \conf_{\Cbb P^1} \colon (\Poly_{V}^l) \setminus \{0\} \to \Conf_{V, \Cbb P^1}^{dl} $. 
To glue $ \conf_\Cbb $ and $ \conf_{\Cbb P^1 \setminus \{p\}} $ ($ p \in \Cbb $), we apply \cref{lem:gluing} for $ \mobius = \mobius_p $ and to glue $ \conf_{\Cbb P^1 \setminus \{p\}} $ and $ \conf_{\Cbb P^1 \setminus \{q\}} $ ($ p, q \in \Cbb $, $ p \neq q $), we apply it for $ \mobius = \mobius_q\mobius_p^{-1} $. 

\begin{lemma}\label{lem:gluing}
    Let $ \mobius(z) = (az + b)(cz + d)^{-1} $ be a M\"obius transformation and $ c \neq 0 $. 
    Then, the following diagram is commutative:
    \[
        \begin{tikzcd}[column sep=large]
            \left\{f(z) \in \Poly_V^{l, inv} \setmid f\left(-\frac{d}{c}\right) \text{ is invertible. }\right\} \arrow{r}{\conf_{\Cbb}} \arrow{d}{\Theta}
            & \Conf_{V, \Cbb \setminus \{-\frac{d}{c}\}}^{dl} \arrow{d} {\Xi}\\
            \left\{f(z) \in \Poly_V^{l, inv} \setmid f\left(\frac{a}{c}\right) \text{ is invertible. }\right\} \arrow{r}{\conf_{\Cbb}}
            & \Conf_{V, \Cbb \setminus \{\frac{a}{c}\}}^{dl}
        \end{tikzcd}
    \]
    Here, we define $ \Theta(f(z)) \coloneqq (-cz + a)^l f(\mobius^{-1}(z)) $ and $ \Xi([A, \iota]) \coloneqq ([\mobius(A), \iota]) $. 
\end{lemma}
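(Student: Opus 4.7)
Set $g := \Theta(f) = (-cz+a)^l f(\mobius^{-1}(z))$. The assertion to establish is the equality $[\mobius(A(f)), \iota(f)] = [A(g), \iota(g)]$ inside $\Conf_{V, \Cbb \setminus \{a/c\}}^{dl}$. Since points of the configuration space are $\GL_{dl}(\Cbb)$-orbits, I plan to produce a single $\Cbb$-linear isomorphism $\psi \colon H^0(\Coker \tilde g) \xrightarrow{\sim} H^0(\Coker \tilde f)$ satisfying
$$
    \psi \circ A(g) = \mobius(A(f)) \circ \psi
    \quad \text{and} \quad
    \psi \circ \iota(g) = \iota(f),
$$
which is enough to conclude.

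The engine will be the direct computation $-c\mobius(z) + a = (ad-bc)/(cz+d)$, yielding
$$
    g(\mobius(z)) = (-c\mobius(z) + a)^l\, f(z) = \left(\frac{ad-bc}{cz+d}\right)^l f(z).
$$
I will choose an open neighborhood $U \subset \Cbb \setminus \{-d/c\}$ of the (finite) zero set of $\det f$, which exists because $f(-d/c)$ is invertible by hypothesis. On $U$ the scalar $u(z) := ((ad-bc)/(cz+d))^l$ is a unit in $\Ocal_U$, so $\tilde f|_U$ and $\widetilde{g \circ \mobius}|_U$ have the same image submodule of $\Ocal_U(\textendash;V)$; consequently $\Coker \tilde f|_U = \Coker \widetilde{g \circ \mobius}|_U$ as quotient sheaves.

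Next, put $U' := \mobius(U) \subset \Cbb \setminus \{a/c\}$, a neighborhood of the zeros of $\det g$ (which coincide with $\mobius$ of the zeros of $\det f$). Pullback along the biholomorphism $\mobius \colon U \to U'$ induces a ring isomorphism $\mobius^\sharp \colon \Ocal_{U'} \xrightarrow{\sim} \Ocal_U$, $h(w) \mapsto h(\mobius(z))$, under which $\tilde g|_{U'}$ corresponds to $\widetilde{g \circ \mobius}|_U$. Combined with the preceding step this identifies $\Coker \tilde g|_{U'}$ with $\Coker \tilde f|_U$ in a manner compatible with the $\Ocal$-module actions along $\mobius^\sharp$. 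Since both cokernels have the prescribed supports inside $U'$ and $U$ respectively (\cref{lem:support}), taking global sections defines the required linear isomorphism $\psi$.

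Finally, the intertwining properties follow by naturality. Multiplication by the coordinate $w$ on $H^0(\Coker \tilde g)$ transports under $\mobius^\sharp$ to multiplication by $\mobius(z)$ on $H^0(\Coker \tilde f)$, which by polynomial functional calculus applied to $A(f)$ is exactly $\mobius(A(f))$; hence $\psi \circ A(g) = \mobius(A(f)) \circ \psi$. Constant $V$-valued sections pull back to constant $V$-valued sections under $\mobius^\sharp$, so $\psi \circ \iota(g) = \iota(f)$. The one substantive ingredient is the algebraic identity $g(\mobius(z)) = u(z) f(z)$ with $u \in \Ocal_U^\times$; everything else is naturality of pullback under the Möbius biholomorphism, and this bookkeeping is the step I expect to need the most care to formulate cleanly.
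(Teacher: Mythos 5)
Your proof is correct, but it takes a genuinely different route from the paper's. The paper characterizes $\conf_{\Cbb}(f)=[A,\iota]$ by exactness of the polynomial sequence $0\to\Cbb[z]\otimes V\xrightarrow{f(z)}\Cbb[z]\otimes V\xrightarrow{q_1}\Cbb^{dl}\to 0$ and then verifies the analogous exactness for $\Theta(f)$ with the map $q_2$ built from $(\mobius(A),\iota)$: it checks $q_2(\Theta(f)v)=(-c\mobius(A)+a)^l q_1(f(z)v)=0$, imports surjectivity of $q_2$ from \cref{lem:relation}, and closes with a dimension count. You instead transport the cokernel sheaf itself along the biholomorphism $\mobius$, using the identity $g\circ\mobius=u\cdot f$ with $u=((ad-bc)/(cz+d))^l$ a unit on a neighborhood of the support of $\Coker\tilde{f}$, so that the two image subsheaves literally coincide and the intertwining of $A(\cdot)$ and $\iota(\cdot)$ comes from naturality of pullback. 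Your route is more geometric, and it has the side benefit of reproving rather than invoking the surjectivity statement of \cref{lem:relation}, since $\psi$ carries the surjection $\sum_i A(g)^i\iota(g)$ to $\sum_i\mobius(A(f))^i\iota(f)$. One point to tighten: ``polynomial functional calculus'' is not quite the right justification that multiplication by the rational function $\mobius(z)$ on $H^0(\Coker\tilde{f})$ equals $(aA+b)(cA+d)^{-1}$, since $\mobius$ is not a polynomial; argue instead that multiplication by $cz+d$ acts as the operator $cA+d$, which is invertible because $-d/c$ avoids the support (the spectrum of $A$), and that composing multiplication by $\mobius(z)$ with multiplication by $cz+d$ gives multiplication by $az+b$, i.e.\ $aA+b$.
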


\begin{proof}
    Take $ f(z) = \sum_{j = 0}^l a_j z^j \in \Poly_{V}^{l, inv} $ and suppose that $ f(-c^{-1}d) $ is invertible. 
    Let $ \conf_{\Cbb}(f(z)) = [A, \iota] $. 
    Then, we will show that $ \conf_{\Cbb}(\Theta(f(z))) = \Xi([A, \iota]) $. 
    By assumption, the following sequence is exact:
    \[
        \begin{tikzcd}
            0 \arrow{r} 
            & \Cbb[z] \otimes V \arrow{r}{f(z)} 
            & \Cbb[z] \otimes V \arrow{r}{q_1} 
            & \Cbb^{dl} \arrow{r} 
            & 0
        \end{tikzcd}
    \]
    Here, we define $ q_1(\sum_{i = 0}^k z^i \otimes v_i) \coloneqq \sum_{i = 0}^k A^i\iota(v_i) $. 
    It is enough to show that the following sequence is exact:
    \[
        \begin{tikzcd}
            0 \arrow{r} 
            & \Cbb[z] \otimes V \arrow{rrr}{(-cz + a)^l f(\mobius^{-1}(z))} 
            & 
            & 
            & \Cbb[z] \otimes V \arrow{r}{q_2} 
            & \Cbb^{dl} \arrow{r} 
            & 0
        \end{tikzcd}
    \]
    Here, we define $ q_2(\sum_{i = 0}^k z^i \otimes v_i) \coloneqq \sum_{i = 0}^k \mobius(A)^i\iota(v_i) $. 
    The surjectivity of $ q_2 $ follows from \cref{lem:relation}. 
    For $ v \in V $, 
    \begin{align*}
        q_2((-cz + a)^l f(\mobius^{-1}(z))v) & = q_2\left((-cz + a)^l \sum_{j = 0}^l a_j v \mobius^{-1}(z)^j\right) \\
        & = (-c\mobius(A) + a)^l \sum_{j = 0}^l A^j \iota(a_j v) \\
        & = (-c\mobius(A) + a)^l q_1(f(z)v) \\
        & = 0. 
    \end{align*}
    Thus, $ q_2 \colon (\Cbb[z] \otimes V) / {\Im((-cz + a)^l f(\mobius^{-1}(z)))} \to \Cbb^{dl} $ is well-defined and surjective, and since the dimension of domain and codomain are equal, it is isomorphism. 
    It means the exactness to be shown. 
\end{proof}

\begin{proof}[Proof of \cref{thm:coker-conti2}]
    By restriction of $ \conf_{\Cbb P^1} \colon (\Poly_{V}^l) \setminus \{0\} \to \Conf_{V, \Cbb P^1}^{dl} $, a continuous map 
    \[
        \conf_{\Cbb P^1 \setminus \gamma} \colon \Poly_{V, \gamma}^{l, r} \to \Conf_{V, \Cbb P^1 \setminus \gamma}^{r, dl - r}
    \]
    is defined. 
    By composition with a continuous map $ \res_{D_\gamma} \colon \Conf_{V, \Cbb P^1 \setminus \gamma}^{r, dl - r} \to \Conf_{V, D_\gamma}^r $ defined in \cref{subsec:restriction}, 
    a continuous map 
    \[
        \res_{D_\gamma} \circ \conf_{\Cbb P^1 \setminus \gamma} \colon \Poly_{V, \gamma}^{l, r} \to \Conf_{V, D_\gamma}^r
    \]
    is defined. 
    By construction, this corresponds to the map of proposition
    \[
        \conf_{D_\gamma} \colon \Poly_{V, \gamma}^{l, r} \to \Conf_{V, D_\gamma}^r. 
    \]
\end{proof}

\section{Proof of the Bott periodicity theorem} \label{sec:proof-of-bott-periodicity}

\begin{construction}\label{const:gluing}
    Let $ X $ be a compact Hausdorff space, $ E $ a complex vector bundle over $ X $. 
    Let $ l \in \Zbb_{\geq 0} $ and continuous sections $ a_j \in \Gamma(X, \End(E)) $ for $ 0 \leq j \leq l $ are given. 
    Then, we get a family of polynomials
    \[
        H = \left\{H_x(z) = \sum_{j = 0}^l a_j(x) z^j\right\}_{x \in X}. 
    \]
    For each $ x \in X $, $ z \in S^1 $, we suppose that $ H_x(z) \in \End_{\Cbb}(E_x) $ is invertible. 
    After this, we will construct a complex vector bundle $ \bundle_X(E, H) $ over $ X $ as follows:
    Let $ \{U_\lambda\}_\lambda $ be a trivialization covering of $ E $ and $ \rest{E}{U_\lambda} \cong U_\lambda \times V $ a trivialization for each $ \lambda $. 
    Here, $ V $ is a finite-dimensional complex vector space. 
    We define 
    \[
        H_\lambda \coloneqq \left\{(H_\lambda)_x(z) \coloneqq H_x(z) = \sum_{j = 0}^l a_j(x) z^j\right\}_{x \in U_\lambda}. 
    \]
    We can assume that the number of roots with multiplicity in $ \disk $ of $ \det(H_\lambda)_x $ is constant $ r_\lambda $, independent of $ x \in U_\lambda $.
    Applying \cref{const:basic} to $ H_\lambda $, we get a vector bundle $ \bundle_X(V, H_\lambda) $ over $ U_\lambda $. 
    Let $ \{g_{\mu\lambda} \colon U_\lambda \cap U_\mu \to \GL_{\Cbb}(V)\}_{\lambda, \mu} $ be transition maps of $ E $. 
    For each $ g_{\mu\lambda} $, if $ U_\lambda \cap U_\mu \neq \emptyset $, commutative diagram
    \[
        \begin{tikzcd}[column sep=huge]
            & (U_\lambda \cap U_\mu) \times \Ecal_{V, \disk}^{r_\lambda} \ar[d] \ar[r, "\bar{g'}_{\mu\lambda}"] & \Ecal_{V, \disk}^{r_\lambda} \ar[d] \\
            U_\lambda \cap U_\mu \ar[r, "\conf(H_\lambda)"] \ar[rr, bend right, "\conf(H_\mu)"] & (U_\lambda \cap U_\mu) \times \Conf_{V, \disk}^{r_\lambda} \ar[r, "g'_{\mu\lambda}"] & \Conf_{V, \disk}^{r_\lambda}
        \end{tikzcd}
    \]
    is induced. 
    Taking two pullback in the same way of \cref{const:basic}, we get an isomorphism $ \rest{\bundle_X(V, H_\lambda)}{U_\lambda \cap U_\mu} \cong \rest{\bundle_X(V, H_\mu)}{U_\lambda \cap U_\mu} $. 
    Gluing $ \{\bundle_X(V, H_\lambda)\}_\lambda $ by these isomorphisms for $ g_{\mu\lambda} $, we get a vector bundle $ \bundle_X(E, H) $ over $ X $. 
\end{construction}

\begin{lemma}\label{lem:property-of-bundle}
    \begin{enumerate}
        \item For $ f \colon Y \to X $, $ \bundle_Y(f^* E, f^* H) \cong f^* \bundle_X(E, H) $. 
        \item $ \bundle_X(E_1 \oplus E_2, H_1 \oplus H_2) \cong \bundle_X(E_1, H_1) \oplus \bundle_X(E_2, H_2) $. 
        \item $ \bundle_X(E, \id) = 0 $. 
        \item $ \bundle_X(E, zH) \cong E \oplus \bundle_X(E, H) $. 
        \item For a complex vector bundle $ G $ over $ X $, 
        \[
            \bundle_X(G \otimes E, \id \otimes H) \cong G \otimes \bundle_X(E, H). 
        \]
    \end{enumerate}
\end{lemma}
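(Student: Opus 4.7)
The plan is to verify each of the five assertions by working with the concrete description of $\bundle_X(V, H)$ from \cref{sec:continuity}: combined with \cref{lem:configuration-space-one-to-one}, this exhibits $\bundle_X(V, H)$ locally as the underlying vector bundle of the configuration triple $\bigl(H^0(\Coker(\tilde H)|_{\disk}), A(H), \iota(H)\bigr)$ built from the cokernel of $\tilde H$ together with multiplication by $z$ and the constant inclusion of $V$. Parts (1), (2), (3), (5) then follow from the corresponding properties of cokernels.

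For (1), every ingredient of \cref{const:basic} and \cref{const:gluing} is natural in the input, so $\conf(f^*H) = \conf(H) \circ f$ and hence $\bundle_Y(f^*E, f^*H) = f^* \bundle_X(E, H)$. For (2), the sheaf map $\widetilde{H_1 \oplus H_2}$ splits as $\tilde H_1 \oplus \tilde H_2$; its cokernel, the $\bar z$-action, and the constant inclusion $\iota$ all respect this splitting, yielding the direct sum of configurations. For (3), when $H = \id$ the map $\tilde H$ is the identity, its cokernel vanishes, and the associated rank-$0$ bundle is $0$. For (5), $\widetilde{\id_G \otimes H} = \id_G \otimes \tilde H$, so its cokernel is $G \otimes \Coker(\tilde H)$; globalizing gives $\bundle_X(G \otimes E, \id \otimes H) \cong G \otimes \bundle_X(E, H)$.

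The essential computation is (4). Factoring $zH = z \circ H$ as sheaf homomorphisms on $\disk$, both of which are injective, one obtains a short exact sequence of coherent sheaves supported on finite subsets of $\disk$,
\[
0 \to \Coker(\tilde H) \to \Coker(\widetilde{zH}) \to V_0 \to 0,
\]
where the first map is $[s] \mapsto [zs]$, $V_0$ is the skyscraper sheaf with stalk $V$ at $0$, and the second map is evaluation at $0$. Taking $H^0$ (exact for finite-support coherent sheaves) and splitting via the constant inclusion $v \mapsto [v]$ yields $H^0(\Coker(\widetilde{zH})) \cong H^0(\Coker(\tilde H)) \oplus V$. A direct computation in this decomposition shows that the configuration data for $zH$ take the form
\[
A(zH) = \begin{pmatrix} A(H) & \iota(H) \\ 0 & 0 \end{pmatrix},
\qquad \iota(zH) = \begin{pmatrix} 0 \\ \id_V \end{pmatrix},
\]
so the underlying vector bundle of the configuration for $zH$ is $\bundle_X(V, H) \oplus \underline V$, and globalizing over $X$ gives (4).

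The main obstacle is checking that the local decompositions behind (2)--(5) are compatible under the gluing described in \cref{const:gluing}, so that fiberwise vector space isomorphisms assemble into bundle isomorphisms over $X$. However, all the splittings involved — constant inclusion, evaluation at $0$, direct sum projections, and tensor product with $G$ — are given by formulas natural in the parameter $x \in X$ and commuting with the $\GL_{\Cbb}$-conjugation defining the associated bundle $\Ecal_{V, \disk}^r$, so the compatibility is automatic and the local constructions glue to the claimed global bundle isomorphisms.
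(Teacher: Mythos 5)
Your proof is correct, and it fills in details that the paper leaves implicit.  The paper's own argument is very terse: (1) is proved via the same pullback diagram you invoke; (2), (3), (4) are dismissed with ``easy to show by construction''; and (5) is handled by reducing to the case $G = \underline{\Cbb^n}$ and then applying (2).  Your treatment of (1) matches the paper.  For (2)--(4) you make explicit what ``by construction'' means: you exhibit the local model of $\bundle_X(V,H)$ as $H^0(\Coker\tilde H)$ equipped with $(\bar z, \iota)$ and carry out the cokernel bookkeeping.  Your proof of (4) is the genuinely substantive addition: the short exact sequence $0 \to \Coker(\tilde H) \to \Coker(\widetilde{zH}) \to V_0 \to 0$ obtained from the factorization $\widetilde{zH} = m_z \circ \tilde H$ of injective sheaf maps, its splitting by the constant section, and the resulting block-triangular form of $A(zH)$ together with $\iota(zH) = (0, \id_V)^\top$ are exactly what the terse ``by construction'' is hiding, and your explicit matrices are correct.  (One can phrase this globalization cleanly as a continuous map $\Phi\colon \Conf_{V,\disk}^r \to \Conf_{V,\disk}^{r+\dim V}$, $[A,\iota] \mapsto \bigl[\begin{smallmatrix} A & \iota \\ 0 & 0 \end{smallmatrix}, \begin{smallmatrix} 0 \\ \id \end{smallmatrix}\bigr]$, satisfying $\Phi\circ \conf(H) = \conf(zH)$ and $\Phi^*\Ecal_{V,\disk}^{r+\dim V} \cong \Ecal_{V,\disk}^r \oplus \underline V$, which handles the gluing compatibility you raise in the final paragraph without a case-by-case check.)  The one place you diverge from the paper is (5): the paper reduces to trivial $G$ and invokes (2), which sidesteps the need to check that $\widetilde{\id_G \otimes H} = \id_G \otimes \tilde H$ glues across trivialization patches; your direct sheaf-theoretic argument is also valid, but the paper's reduction is slightly cleaner precisely because the gluing issue you flag disappears once $G$ is trivialized.
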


\begin{proof}
    It enough to show (1) in the case of trivial bundle $ E = \underline{V} $. 
    This follows from the following pullback diagram:
    \[
        \begin{tikzcd}[column sep=huge]
            \bundle_Y(\underline{V}, f^*H) \arrow{r} \arrow{d} \arrow[dr, phantom, pos=0.0495, "\lrcorner"] & \bundle_X(\underline{V}, H) \arrow{r} \arrow{d} \arrow[dr, phantom, pos=0.125, "\lrcorner"] & \Ecal_{V, \disk}^r \arrow{d} \\
            Y \arrow{r}{f} \ar[rr, bend right, "\conf(Hf)"] & X \arrow{r}{\conf(H)} & \Conf_{V, \disk}^r
        \end{tikzcd}
    \]
    It easy to show (2), (3) and (4) by construction. 
    It enough to show (5) in the case of trivial bundle $ G = \underline{\Cbb^n} $. 
    It reduces to
    \[
        \bundle_X(E^{\oplus n}, H^{\oplus n}) \cong \bundle_X(E, H)^{\oplus n}, 
    \]
    and this follows from (2). 
\end{proof}

\begin{proof}[Proof of \cref{thm:bott-periodicity}]
    From now on, we will define a homomorphism $ \alpha_X \colon K(X \times (D^2, S^1)) \to K(X) $. 
    Take $ [\bar{E}, \bar{F}, \bar{H}] \in K(X \times (D^2, S^1)) $. 
    Let $ E \coloneqq \rest{\bar{E}}{X \times \{1\}} $, then $ \bar{E} \cong \proj_X^*E $. 
    Also $ \bar{F} \cong \proj_X^*E $ since $ \rest{\bar{H}}{X \times \{1\}} \colon E = \rest{\bar{E}}{X \times \{1\}} \cong \rest{\bar{F}}{X \times \{1\}} $. 
    Thus, we can write $ [\bar{E}, \bar{F}, \bar{H}] = [\proj_X^*E, \proj_X^*E, H] $ by using a bundle morphism $ H \colon \rest{\proj_X^*E}{X \times S^1} \cong \rest{\proj_X^*E}{X \times S^1} $. 
    Furthermore, since $ X $ is compact, replacing by Cezaro means $ H_n $ of Fourier series of $ H $ (\cite{MR0178470}), we may assume that $ H $ has a form of a continuous family of Laurent polynomial
    \[
        H = \left\{H_x(z) = \sum_{j = - k}^l a_j(x) z^j\right\}_{x \in X}
    \]
    where $ k, l \in \Zbb_{\geq 0} $ and $ a_j(x) \in \End_{\Cbb}(E_x) $ for any $ x \in X $. 
    Multiplying $ H $ by $ z^k $, we have a family of polynomials
    \[
        z^k H = \left\{(z^kH)_x(z) = \sum_{j = 0}^{k + l} a_{j - k}(x) z^j\right\}_{x \in X}. 
    \]
    Applying \cref{const:gluing}, we get a vector bundle $ \bundle_X(E, z^kH) $ over $ X $. 
    Then, define $ \alpha_X([\bar{E}, \bar{F}, \bar{H}]) = \alpha_X([\proj_X^*E, \proj_X^*E, H]) \coloneqq [E^{\oplus k}] - [\bundle_X(E, z^k H)] \in K(X) $. 

    We will confirm the well-definedness of $ \alpha_X \colon K(X \times (D^2, S^1)) \to K(X) $. 
    First, in the case that we regard $ H $ as $ H = \{H_x(z) = \sum_{j = -k}^{l + 1} a_j(x)z^j \}_{x \in X} $ where $ a_{l + 1} = 0 $, its compatibility follows from the following commutativity:
    \[
        \begin{tikzcd}[column sep=huge]
            \Poly_{V, S^1}^{k + l, r} \arrow[d, hookrightarrow] \arrow{dr}{\conf_{\disk}} & \\
            \Poly_{V, S^1}^{k + l + 1, r} \arrow{r}[swap]{\conf_{\disk}} & \Conf_{V, \disk}^r
        \end{tikzcd}
    \]
    In the case that we regard $ H $ as $ H = \{H_x(z) = \sum_{j = -k - 1}^{l} a_j(x)z^j \}_{x \in X} $ where $ a_{k + 1} = 0 $, it is compatible because $ [E^{\oplus k + 1}] - [\bundle_X(E, z^{k + 1} H)] = [E^{\oplus k + 1}] - [\bundle_X(E, z^k H) \oplus E] = [E^{\oplus k}] - [\bundle_X(E, z^k H)] \in K(X) $ by \cref{lem:property-of-bundle} (4). 
    Next, it is independent of the choice of Cezaro means $ H_n $ of Fourier series of $ H $ if $ n $ is sufficiently large because we can connect $ H_n $ and $ H_{n + 1} $ linearly and construct a bundle in the same way as \cref{const:gluing} for a family of Laurent polynomials parametrized by $ X \times [0, 1] $. 
    From the above, the element of $ K(X) $ is defined, independent of the choice of $ H_n $ as a replacement of $ H $. 
    Finally, we confirm the compatibility regarding homotopy and stabilization. 
    About homotopy, considering that $ H $ is parametrized by $ X \times [0, 1] $ and constructing a bundle in the same way as the above, we can see that the same elements of $ K(X) $ are defined. 
    About stabilization, it is compatible by \cref{lem:property-of-bundle} (2) and (3). 
    Base on the above, $ \alpha_X \colon K(X \times (D^2, S^1)) \to K(X) $ is well-defined. 

    To show that $ \alpha_X \colon K(X \times (D^2, S^1)) \to K(X) $ is the inverse map of $ \beta_X \colon K(X) \to K(X \times (D^2, S^1)) $, it is enough to confirm the following three conditions (\cite{MR0228000}). 
    \begin{description}
        \item[(A1)] $ \alpha_X $ is functorial in $ X $. 
        \item[(A2)] $ \alpha_X $ is a $ K(X) $-module homomorphism. 
        \item[(A3)] $ \alpha_X([\underline{\Cbb}, \underline{\Cbb}, z^{-1}]) = [\underline{\Cbb}] $. 
    \end{description}

    First, we show (A1). 
    Let $ X $ and $ Y $ be compact Hausdorff spaces, $ f \colon Y \to X $ a continuous map. 
    It is enough to show that the following diagram is commutative and it follows from \cref{lem:property-of-bundle} (1). 
    \[
        \begin{tikzcd}[column sep=large]
            K(X \times (D^2, S^1)) \arrow{r} \arrow{d} & K(X) \arrow{d} \\
            K(Y \times (D^2, S^1)) \arrow{r} & K(Y)
        \end{tikzcd}
    \]

    Next, (A2) follows from \cref{lem:property-of-bundle} (2) and (5). 

    Finally, we show (A3).
    Define $ H \colon X \times S^1 \to \GL_1(\Cbb) = \Cbb^{\times} $ by $ H_x(z) \coloneqq z^{-1} $ ($ x \in X $, $ z \in S^1 $). 
    Then, $ zH = 1 \in \Poly_{\Cbb, S^1}^{0, 0} $ and $ \bundle_X(\underline{\Cbb}, zH) = 0 $ by \cref{lem:property-of-bundle} (3). 
    Therefore, $ \alpha_X([\underline{\Cbb}, \underline{\Cbb}, z^{-1}]) = [\underline{\Cbb}] - [0] = [\underline{\Cbb}] $. 
\end{proof}

\section{Comparison with other proofs of the Bott periodicity theorem}\label{sec:relation}

In this section, we will compare our proof with Atiyah--Bott's proof (\cite{MR0178470}) and Atiyah's proof (\cite{MR0228000}). 
Before this, we make one additional point regarding \cref{const:basic}. 
Suppose a compact Hausdorff space $ X $, a finite-dimensional vector space $ V $ and a continuous map $ H \colon X \to \Poly_{V, S^1}^{l, r} $ are given in the same as \cref{const:basic}. 
The composition map $ \conf(H) = \conf_{\disk} \circ H \colon X \to \Poly_{V, S^1}^{l, r} \to \Conf_{V, \disk}^r $ defines bundle morphisms
\begin{align*}
    A(H) \colon \bundle_X(V, H) \to \bundle_X(V, H), \quad
    \iota(H) \colon \underline{V} \to \bundle_X(V, H) 
\end{align*}
by \cref{lem:configuration-space-one-to-one}. 
Furthermore, a surjective morphism 
\[
    \pi(H) \coloneqq \sum_{i = 0}^{r - 1} A(H)^i \iota(H) \colon \underline{V}^{\oplus r} \to \bundle_X(V, H)
\]
is defined. 

\subsection{Comparison with Atiyah--Bott's proof}\label{subsec:atiyah-bott}

In the proof by Atiyah--Bott, clutching functions of vector bundles over $ S^2 $ are approximated by Laurent polynomials using Fourier series expansion, replaced by polynomials, replaced by linear polynomials, and finally they get vector bundles in the way of taking Bloch bundles. 
On the other hand, we give a way of directly getting vector bundles from polynomials. 
Furthermore, restricting our construction to the case of linear polynomials, by the following discussion, we can show that both constructions are isomorphic by the following discussion. 

First, we discuss the case limited to fibers. 
Let $ V $ be a $ d $-dimensional complex vector space. 
Suppose that $ A \in \End_{\Cbb}(V) $ does not have eigenvalues in $ S^1 $. 
Let a projection $ p $ on $ V $ be defined by 
\[
    p \coloneqq \frac{1}{2\pi i} \int_{S^1} (z \cdot \id - A)^{-1} dz. 
\]
On the other hand, let $ f(z) \coloneqq z - A \in \Poly_{V}^{1, inv} $. 
We define $ \tilde{f} \colon \Ocal_{\disk}(\textendash; V) \to \Ocal_{\disk}(\textendash; V) $ in the same way of \cref{subsec:configuration-map}. 

\begin{lemma}\label{lem:compare1}
    The composition $ p(V) \hookrightarrow V \hookrightarrow \Ocal_{\disk}(\disk; V) \to H^0(\Coker{\tilde{f}}) $ is an isomorphism. 
\end{lemma}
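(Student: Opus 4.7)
The plan is to split $V$ by the Riesz projection $p$ and then construct an explicit inverse via holomorphic functional calculus. I would write $V_1 \coloneqq p(V)$ and $V_2 \coloneqq (1-p)V$, so $V = V_1 \oplus V_2$, $A = A_1 \oplus A_2$ with $\sigma(A_1) \subset \disk$ and $\sigma(A_2) \cap \bar{\disk} = \emptyset$, and $\tilde{f}$ splits as $\tilde{f}_1 \oplus \tilde{f}_2$ on $\Ocal_\disk(\textendash; V_1) \oplus \Ocal_\disk(\textendash; V_2)$. For every $z \in \disk$ the endomorphism $zI - A_2$ is invertible, so $\tilde{f}_2$ is an isomorphism of sheaves and $\Coker \tilde{f}_2 = 0$. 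Consequently, the constant-function inclusion sends $V_2$ to zero in $H^0(\Coker \tilde{f})$, and it suffices to show that the induced map $V_1 \to H^0(\Coker \tilde{f}_1)$ is an isomorphism.

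For the dimension of the target, the argument of \cref{lem:support} applied on the Stein open $\disk$ (using that each stalk $\Ocal_{\disk, \lambda}$ is a discrete valuation ring and that $H^1(\disk, \Ocal) = 0$, so the short exact sequence of sheaves descends to one of global sections) gives $\dim_{\Cbb} H^0(\Coker \tilde{f}_1) = \deg \det(zI - A_1) = \dim V_1$, since every root of $\det(zI - A_1)$ lies in $\disk$. It then suffices to prove injectivity. I would fix a contour $\Gamma \subset \disk$ enclosing $\sigma(A_1)$ and define
\[
    \mathrm{ev} \colon \Ocal_\disk(\disk; V_1) \to V_1, \qquad
    \mathrm{ev}(s) \coloneqq \frac{1}{2\pi i} \oint_\Gamma (zI - A_1)^{-1} s(z)\, dz.
\]
If $s = (z - A_1) t$ with $t$ holomorphic on $\disk$, then the integrand simplifies to $t(z)$, which is holomorphic inside $\Gamma$, so $\mathrm{ev}(s) = 0$ by Cauchy's theorem; hence $\mathrm{ev}$ descends to $\overline{\mathrm{ev}} \colon H^0(\Coker \tilde{f}_1) \to V_1$. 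For a constant section $v \in V_1$, the integral is exactly $pv$, and since $v \in V_1 = p(V)$ this equals $v$. Thus $\overline{\mathrm{ev}}$ is a left inverse to $v \mapsto [v]$, which gives injectivity; combined with the dimension count this yields the claimed isomorphism.

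The main obstacle is the dimension count, which requires a careful reprise of the local Smith-normal-form argument of \cref{lem:support} on $\disk$ rather than on $\Cbb$. Once that is in place, the splitting via $p$ and the functional calculus inverse are routine; the verification that $\overline{\mathrm{ev}}$ restricted to constants is the identity is where one uses that $p$ is precisely the Riesz projection onto eigenvalues inside $\disk$, tying the algebraic statement back to the analytic one.
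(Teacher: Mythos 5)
Your proposal is correct. It follows the same overall skeleton as the paper's proof --- split $V$ according to whether the spectrum of $A$ lies inside or outside the disk, observe that $z-A_2$ is invertible on all of $\disk$ so the outside part contributes nothing to the cokernel, and then match dimensions on the inside part via the local Smith-normal-form computation --- but it differs in two worthwhile ways. First, the paper reduces all the way to a single Jordan block before doing the case analysis, whereas you stop at the coarser Riesz-projection splitting $V = p(V)\oplus(1-p)(V)$; this is cleaner and avoids choosing a basis. Second, and more substantively, the paper disposes of injectivity on the inside part with ``it is easy to see,'' while you construct an explicit left inverse $\overline{\mathrm{ev}}$ by the contour integral $s\mapsto \frac{1}{2\pi i}\oint_\Gamma(zI-A_1)^{-1}s(z)\,dz$, checking that it kills $\mathrm{Im}\,\tilde f_1(\disk)$ and restricts to the identity on constants in $p(V)$. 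That gives a self-contained, basis-free proof of injectivity and makes transparent why the Riesz projection is exactly the right map; the only point you should state explicitly (and you do gesture at it) is the identification $\Coker(\tilde f_1(\disk))\cong H^0(\Coker\tilde f_1)$, which follows from $H^1(\disk,\Ocal_\disk(\textendash;V_1))=0$ or, as in the paper's \cref{lem:surj}, from a Mittag--Leffler argument.
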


\begin{proof}
    Taking the Jordan normal form of $ A $ and corresponding direct sum decomposition of $ V $ by Jordan blocks, we can assume that $ A $ consists of one Jordan block
    \[
        \begin{pmatrix}
            \lambda & 1 & & & \\
            & \lambda & 1 & & \\
            & & \ddots & \ddots & \\
            & & & \ddots & 1 \\
            & & & & \lambda
        \end{pmatrix}. 
    \]
    Note that $ \abs{\lambda} \neq 1 $. 
    Then, it is enough to show the following two claims:
    \begin{enumerate}
        \item If $ \abs{\lambda} < 1 $, 
        \[
            V \to \Coker\left(\tilde{f}(\disk) = (z - A) \colon \Ocal_{\disk}(\disk; V) \to \Ocal_{\disk}(\disk; V)\right)
        \]
        is isomorphic and 
        \item if $ \abs{\lambda} > 1 $, 
        \[
            \Coker\left(\tilde{f}(\disk) = (z - A) \colon \Ocal_{\disk}(\disk; V) \to \Ocal_{\disk}(\disk; V)\right) = 0.
        \] 
    \end{enumerate}
    Since $ \tilde{f}(\disk) = (z - A) $ is invertible, (2) holds. 
    It is easy to see the injectivity of (1). 
    Result of elementary matrix operation as $ \Ocal_{\disk, \lambda} $-coefficient matrix, we have
    \[
        z - A = 
        \begin{pmatrix}
            z - \lambda & -1 & & & \\
            & z - \lambda & -1 & & \\
            & & \ddots & \ddots & \\
            & & & \ddots & -1 \\
            & & & & z - \lambda
        \end{pmatrix}
        \sim
        \begin{pmatrix}
            1 & & & & \\
            & 1 & & & \\
            & & \ddots & & \\
            & & & \ddots & \\
            & & & & (z - \lambda)^d
        \end{pmatrix}
    \]
    so $ \dim_{\Cbb}{\Coker\left(\tilde{f}(\disk) = (z - A) \colon \Ocal_{\disk}(\disk; V) \to \Ocal_{\disk}(\disk; V)\right)} = d $. 
    Thus, (1) is isomorphic. 
\end{proof}

Next, we consider the case parametrized by a space $ X $. 
Suppose a continuous map $ A \colon X \to \End_{\Cbb}(V) $ is given. 
For each $ x \in X $, we assume that $ A(x) $ does not have eigenvalues in $ S^1 $ and the number of eigenvalues with multiplicity in $ \disk $ is $ r $. 
We define a family of projection on $ V $ $ p \colon \underline{V} \to \underline{V} $ as
\[
    p_x \coloneqq \frac{1}{2\pi i} \int_{S^1} (z \cdot \id - A(x))^{-1} dz.
\]
On the other hand, let $ f \colon X \to \Poly_{V}^{1, inv} $ be defined by $ f_x(z) = z - A(x) $ for $ x \in X $. 
The following proposition follows from \cref{lem:compare1}. 

\begin{proposition}
    The composition of bundle morphisms over $ X $
    \[
        p(\underline{V}) \hookrightarrow \underline{V} \hookrightarrow \underline{V}^{\oplus r} \xrightarrow{\pi(f)} \bundle_X(V, f)
    \]
    is isomorphic. 
    Here, the second map is inclusion into the first component. 
\end{proposition}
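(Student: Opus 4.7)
The plan is to reduce the proposition to the pointwise statement of \cref{lem:compare1} together with the standard fact that a bundle morphism between vector bundles of the same finite rank that is fiberwise bijective is itself an isomorphism. So I would first verify that both the source $ p(\underline{V}) $ and the target $ \bundle_X(V, f) $ are rank-$ r $ vector bundles over $ X $. For the source, $ p \colon \underline{V} \to \underline{V} $ is a continuous family of projections, and its rank at each $ x \in X $ equals the number of eigenvalues with multiplicity of $ A(x) $ lying inside $ \disk $, which is $ r $ by hypothesis; hence $ p(\underline{V}) $ is a rank-$ r $ subbundle. For the target, $ \bundle_X(V, f) = \conf(f)^{*}\Ecal^{r}_{V, \disk} $ is a rank-$ r $ bundle by construction.

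Next I would check that at each $ x \in X $ the composition restricts to the map considered in \cref{lem:compare1}. The arrow $ \underline{V} \hookrightarrow \underline{V}^{\oplus r} $ is the inclusion into the first summand, so post-composition with $ \pi(f) = \sum_{i = 0}^{r - 1} A(f)^{i}\iota(f) $ picks out the $ i = 0 $ term, namely $ \iota(f) $. Unwinding the definition of $ \iota(f) $ from \cref{subsec:configuration-map}, the fiber map $ \iota(f)_x \colon V \to \bundle_X(V, f)_x $ is, under the natural identification $ \bundle_X(V, f)_x \cong H^{0}(\Coker \tilde{f_x}) $ coming from the construction of $ \conf_{\disk} $, the composition $ V \xrightarrow{\const} \Ocal_\disk(\disk; V) \twoheadrightarrow H^{0}(\Coker \tilde{f_x}) $. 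Therefore the fiber over $ x $ of the composition in the proposition coincides with the map $ p(V) \hookrightarrow V \to \Ocal_\disk(\disk; V) \to H^{0}(\Coker \tilde{f_x}) $ of \cref{lem:compare1}, applied to $ A(x) $, and is thus an isomorphism.

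With these two ingredients the conclusion is immediate: the composition is a morphism between rank-$ r $ vector bundles over $ X $ that is a linear isomorphism on every fiber, hence a bundle isomorphism. The only nontrivial step is the identification of the fiber $ \bundle_X(V, f)_x $ with $ H^{0}(\Coker \tilde{f_x}) $ in a way that makes $ \iota(f)_x $ correspond to the projection of constant functions; this is essentially built into the construction of the configuration map via the short exact sequence defining $ \Coker \tilde{f} $ and the Quot-scheme description in \cref{sec:continuity}, but it is the main bookkeeping point and is where I would take care to be explicit rather than introducing new ideas.
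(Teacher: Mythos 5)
Your proposal is correct and follows essentially the same route as the paper, which simply asserts that the proposition follows from \cref{lem:compare1}; you have filled in the standard details (both bundles have rank $r$, the composition with the first-summand inclusion picks out $\iota(f)$, and a fiberwise isomorphism between bundles of equal rank is a bundle isomorphism). The identification of $\bundle_X(V,f)_x$ with $H^0(\Coker\tilde{f_x})$ carrying $\iota(f)_x$ to the projection of constants is indeed built into the definition of $\conf_{\disk}$, exactly as you note.
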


\subsection{Comparison with Atiyah's proof}\label{subsec:atiyah}

In the proof by Atiyah, they take the Toeplitz operators of clutching functions and get a element of $ K $-group as their family indices. 
In our construction, we define the indices using cokernels of sheaf homomorphisms, limiting the case that clutching functions are given as polynomials. 
This clarifies that there is a preliminary step in the construction of obtaining vector bundles, namely, obtaining families of configurations labeled by vector spaces. 

First, we discuss the case limited to fibers. 
Let $ V $ be a finite-dimensional vector space. 
Let $ f(z) = \sum_{j = 0}^l a_j z^j \in \Poly_{V, S^1}^{l, r} $ using the notation in \cref{subsec:configuration-map}. 
Define $ \tilde{f} \colon \Ocal_{\disk}(\textendash; V) \to \Ocal_{\disk}(\textendash; V) $ in the same way of \cref{subsec:configuration-map}. 
Also, define $ \bar{f} \colon l^2(\Zbb_{\geq 0}; V) \to l^2(\Zbb_{\geq 0}; V) $ as $ \bar{f} \coloneqq \sum_{j = 0}^l a_j S^j $. 
Here, $ S \colon l^2(\Zbb_{\geq 0}; V) \to l^2(\Zbb_{\geq 0}; V) $ is a shift operator satisfying $ S((v_i)_i) = (v_{i - 1})_i $. 

\begin{definition}[Hardy space]
    For $ \varphi(z) \in \Ocal_{\disk}(\disk; V) $ and $ 0 \leq r < 1 $, let $ \varphi_r \colon S^1 \to V $ be defined by $ \varphi_r(z) \coloneqq \varphi(rz) $ ($ z \in S^1 $).
    Define norm of $ \varphi $ as $ \norm{\varphi}_h \coloneqq \limsup_{r \nearrow 1} \norm{\varphi_r}_{L^2} $. 
    Then, we define a subspace of $ \Ocal_{\disk}(\disk; V) $ by 
    \[
      \Ocal_{\disk}(\disk; V)_{L^2} \coloneqq \{ \varphi \in \Ocal_{\disk}(\disk; V) \mid \norm{\varphi}_h < \infty \}. 
    \]

    If a linear map $ \iota \colon l^2(\Zbb_{\geq 0}; V) \to \Ocal_{\disk}(\disk; V)_{L^2} $ is defined by $ \iota((v_i)_i) = \sum_{i \geq 0} v_i z^i $, this map is well-defined and $ \Cbb $-isomorphic and satisfies that for any $ v \in l^2(\Zbb_{\geq 0}; V) $, $ \norm{v}_{l^2} = \norm{\iota(v)}_h $. 
    In paticular, the space $ \Ocal_{\disk}(\disk; V)_{L^2} $ becomes a Hilbert space by inner product $ (\varphi, \psi) = \lim_{r \nearrow 1} (\varphi_r, \psi_r) $. 
    This space is so-called Hardy space. 
\end{definition}

Let 
\[
    \tilde{f}(\disk)_{L^2} \colon \Ocal_{\disk}(\disk; V)_{L^2} \to \Ocal_{\disk}(\disk; V)_{L^2}
\]
be the restriction of $ \tilde{f}(\disk) \colon \Ocal_{\disk}(\disk; V) \to \Ocal_{\disk}(\disk; V) $ to $ \Ocal_{\disk}^{L^2}(\disk; V) $. 
Then, we obtain the following morphisms between short exact sequences:

\[
    \begin{tikzcd}
        0 \arrow{r} 
        & l^2(\Zbb_{\geq 0}; V) \arrow{r}{\bar{f}} \arrow[d,"\iota","\cong"']
        & l^2(\Zbb_{\geq 0}; V) \arrow{r} \arrow[d,"\iota","\cong"']
        & \Coker{\bar{f}} \arrow{r} \arrow[d,"\cong"']
        & 0  \\
        0 \arrow{r} 
        & \Ocal_{\disk}(\disk; V)_{L^2} \arrow{r}{\tilde{f}(\disk)_{L^2}} \arrow[d,hookrightarrow]
        & \Ocal_{\disk}(\disk; V)_{L^2} \arrow{r} \arrow[d,hookrightarrow]
        & \Coker{\tilde{f}(\disk)_{L^2}} \arrow{r} \arrow{d}
        & 0 \\
        0 \arrow{r} 
        & \Ocal_{\disk}(\disk; V) \arrow{r}{\tilde{f}(\disk)} 
        & \Ocal_{\disk}(\disk; V) \arrow{r} 
        & \Coker{\tilde{f}(\disk)} \arrow{r} 
        & 0
    \end{tikzcd}
\]

\begin{lemma}\label{lem:compare2}
    The composition
    \[
        \Coker{\bar{f}} \cong \Coker{\tilde{f}(\disk)_{L^2}} \to \Coker{\tilde{f}(\disk)} \cong H^0(\Coker{\tilde{f}})
    \]
    is an isomorphism. 
\end{lemma}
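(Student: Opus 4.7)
The plan is to show that the natural comparison map $\Coker \tilde f(\disk)_{L^2} \to \Coker \tilde f(\disk)$ is both injective and surjective; combining this with the Hilbert-space isomorphism $\iota \colon l^2(\Zbb_{\geq 0}; V) \xrightarrow{\cong} \Ocal_{\disk}(\disk; V)_{L^2}$ on the source and the identification $\Coker \tilde f(\disk) \cong H^0(\Coker \tilde f)$ (obtained just as in the paragraph following \cref{lem:surj}, but with $\disk$ in place of $\Cbb$) on the target then yields the desired isomorphism.

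For injectivity, suppose $\varphi \in \Ocal_{\disk}(\disk; V)_{L^2}$ lies in the image of $\tilde f(\disk)$, so $\varphi = f \cdot \psi$ for some $\psi \in \Ocal_{\disk}(\disk; V)$. Since $f(\lambda) \in \GL_\Cbb(V)$ for every $\lambda \in S^1$ and $f$ is continuous on $\bar{\disk}$, compactness yields an $\epsilon > 0$ such that $f(w)$ is invertible for $1 - \epsilon \leq \abs{w} \leq 1$ and $C \coloneqq \sup\{\norm{f(w)^{-1}} \mid 1 - \epsilon \leq \abs{w} \leq 1\} < \infty$. Then $\psi_r(z) = f(rz)^{-1} \varphi_r(z)$ for $z \in S^1$ and $r \in [1 - \epsilon, 1)$ gives $\norm{\psi_r}_{L^2} \leq C \norm{\varphi_r}_{L^2}$, and taking $\limsup_{r \nearrow 1}$ shows $\psi \in \Ocal_{\disk}(\disk; V)_{L^2}$; thus $\varphi$ already lies in the image of $\tilde f(\disk)_{L^2}$.

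For surjectivity, note that $\Cbb[z] \otimes V \subset \Ocal_{\disk}(\disk; V)_{L^2}$ (polynomials correspond to finitely supported sequences under $\iota^{-1}$), so it suffices to prove that the composition $\Cbb[z] \otimes V \hookrightarrow \Ocal_{\disk}(\disk; V) \twoheadrightarrow H^0(\Coker \tilde f)$ is surjective. The argument of \cref{lem:support}, restricted to $\disk$, shows that $\Coker \tilde f$ is supported at the finite set of zeros of $\det f(z)$ inside $\disk$ and that $H^0(\Coker \tilde f)$ is finite-dimensional. Since each such zero lies in $\disk$, Riemann's removable singularity theorem supplies a disk analogue of \cref{lem:Chinese}: if $g(z) \in \Cbb[z]$ has all roots in $\disk$, then $\Cbb[z]/(g) \to \Ocal_{\disk}(\disk)/(g)$ is an isomorphism. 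Taking $g$ to be the characteristic polynomial of the endomorphism of $H^0(\Coker \tilde f)$ induced by multiplication by $z$, the module $H^0(\Coker \tilde f)$ is annihilated by $g$, and surjectivity follows by the same reasoning as in the verification that $[A(f), \iota(f)] \in \Conf_{V, \Cbb}^{dl}$.

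The main technical point is to ensure that each algebraic step on $\Cbb$ transfers to $\disk$: one must verify the disk version of \cref{lem:Chinese} together with the local invariant-factor decomposition at each zero $\lambda \in \disk$ of $\det f$ used in the proof of \cref{lem:support}. Both adaptations are routine but depend crucially on the hypothesis $f \in \Poly_{V, S^1}^{l, r}$, which guarantees that $\det f$ has no zeros on $S^1$ and only finitely many inside $\disk$; the former is also what powers the analytic bound in the injectivity step.
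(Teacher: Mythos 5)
Your proof is correct, and it diverges from the paper's in an interesting way. The paper pulls both $\Ocal_{\disk}(\disk; V)$ and the Hardy space back along the meromorphic isomorphism $\tilde{f}^m(\disk)$ and rephrases injectivity and surjectivity as the two subspace identities $\Ocal_{\disk}(\disk; V)_{L^2} = \Ocal_{\disk}(\disk; V) \cap \Mcal_{\disk}(\disk; V)_{L^2}^{\tilde{f}}$ and $\Mcal_{\disk}(\disk; V)^{\tilde{f}} = \Ocal_{\disk}(\disk; V) + \Mcal_{\disk}(\disk; V)_{L^2}^{\tilde{f}}$; the first is declared clear and the second is settled by subtracting the principal parts at the finitely many poles. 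Your injectivity step is essentially the same as the paper's first identity, but you make explicit the analytic input (boundedness of $f(w)^{-1}$ on an annulus $1 - \epsilon \leq \abs{w} \leq 1$) that the paper leaves hidden inside its unproved identification $\Mcal_{\disk}(\disk; V)_{L^2}^{\tilde{f}} = \tilde{f}^m(\disk)^{-1}(\Ocal_{\disk}(\disk; V)_{L^2})$ --- a worthwhile clarification. Your surjectivity step is genuinely different: instead of the Mittag-Leffler-style subtraction of principal parts inside $\Mcal_{\disk}(\disk; V)$, you note that $\Cbb[z] \otimes V$ sits inside the Hardy space and already surjects onto $H^0(\Coker \tilde{f})$ via the disk analogue of \cref{lem:Chinese}, reusing the algebraic argument that verified $[A(f), \iota(f)] \in \Conf_{V, \Cbb}^{dl}$. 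The paper's route is shorter once the framework of \cref{lem:surj} is set up; yours trades that for a purely algebraic surjectivity statement plus one boundary estimate. The disk analogues of \cref{lem:support}, \cref{lem:Chinese} and \cref{lem:surj} that you flag are indeed required, but they are also implicitly assumed by the paper when it defines $\conf_{D_\gamma}$, and they go through verbatim since all relevant zeros of $\det f$ lie in $\disk$.
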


\begin{proof}
    We use the notation in the proof of \cref{lem:support} and \cref{lem:surj}. 
    However, $ \Zcal $ denotes the set of roots of $ \det{f(z)} $ that are in $ \disk $. 
    Same as \cref{lem:surj}, the strategy is to take pullback of $ \Ocal_{\disk}(\disk; V) $ and $ \Ocal_{\disk}(\disk; V)_{L^2} $ along $ \tilde{f}^m(\disk) $ and consider them in $ \Mcal_{\disk}(\disk; V) $. 
    Let $ \Mcal_{\disk}(\disk; V)^{\tilde{f}} \subset \Mcal_{\disk}(\disk; V) $ be defined in the same manner as \cref{lem:surj}, and
    \[
        \Mcal_{\disk}(\disk; V)_{L^2}^{\tilde{f}} \coloneqq \{\varphi \in \Mcal_{\disk}(\disk; V)^{\tilde{f}} \mid \norm{\varphi}_h < \infty\}. 
    \]
    (The poles of $ \varphi \in \Mcal_{\disk}(\disk; V)^{\tilde{f}} $ is finite so $ \norm{\varphi}_h $ is defined in the same way as $ \varphi \in \Ocal_{\disk}(\disk; V) $. )
    Then, we have $ \Mcal_{\disk}(\disk; V)_{L^2}^{\tilde{f}} = \tilde{f}^m(\disk)^{-1}(\Ocal_{\disk}(\disk; V)_{L^2}) $. 
    The injectivity and surjectivity to be shown can be rephrased as the following claims, respectively:
    \begin{align*}
        \Ocal_{\disk}(\disk; V)_{L^2} & = \Ocal_{\disk}(\disk; V) \cap \Mcal_{\disk}(\disk; V)_{L^2}^{\tilde{f}}, \\
        \Mcal_{\disk}(\disk; V)^{\tilde{f}} & = \Ocal_{\disk}(\disk; V) + \Mcal_{\disk}(\disk; V)_{L^2}^{\tilde{f}}. 
    \end{align*}
    The first claim clearly holds, and the second claim follows because it is holomorphic if principal part is subtracted.
\end{proof}

Next, we consider the case parametrized by a space $ X $. 
Suppose a continuous map $ f \colon X \to \Poly_{V, S^1}^{l, r} $ is given. 
Define a homomorphism of Hilbert bundle over $ X $ $ \bar{f} \colon \underline{l^2(\Zbb_{\geq 0}; V)} \to \underline{l^2(\Zbb_{\geq 0}; V)} $ as in the case of fibers. 

\begin{proposition}
    The cokernel bundle $ \Coker{\bar{f}} $ and $ \bundle_X(V, f) $ are naturally isomorphic as vector bundles over $ X $. 
\end{proposition}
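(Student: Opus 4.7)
The plan is to equip $\Coker{\bar{f}}$ with the structure of a configuration in the sense of \cref{lem:configuration-space-one-to-one} and then recognize the resulting classifying map as $\conf(f)$, which immediately gives $\Coker{\bar{f}} \cong \conf(f)^* \mathcal{E}_{V,\disk}^r = \bundle_X(V,f)$.

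First I would verify that $\Coker{\bar f}$ is a rank-$r$ complex vector bundle over $X$. The isomorphism $\iota \colon l^2(\Zbb_{\geq 0};V) \to \Ocal_{\disk}(\disk;V)_{L^2}$ identifies $\bar{f}_x$ with $\tilde{f}_x(\disk)_{L^2}$, so \cref{lem:compare2} and \cref{lem:support} give that each $\bar{f}_x$ is injective with cokernel of dimension exactly $r$. Since $x \mapsto \bar{f}_x$ is continuous in the operator norm and the dimensions of kernel and cokernel are constant, $\Coker{\bar f}$ is a continuous family of finite-dimensional vector spaces and hence a vector bundle of rank $r$ over $X$.

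Next I would build the configuration structure. The shift $S$ commutes pointwise with each coefficient $a_j(x)$, hence with $\bar f_x$, so it induces a bundle endomorphism $\bar S \colon \Coker{\bar f} \to \Coker{\bar f}$. The inclusion of the first coordinate $V \hookrightarrow l^2(\Zbb_{\geq 0};V)$ followed by the quotient gives a bundle morphism $\bar\iota \colon \underline V \to \Coker{\bar f}$. Under $\iota$, the shift $S$ corresponds to multiplication by $z$ on $\Ocal_{\disk}(\disk;V)_{L^2}$, which via \cref{lem:compare2} corresponds to the operator $A(f_x)$ on $H^0(\Coker{\tilde{f}_x})$; likewise $\bar\iota_x$ corresponds to $\iota(f_x)$. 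Consequently $\sigma(\bar S_x) \subset \disk$ and $\sum_{i=0}^{r-1} \bar S^i \bar\iota$ is fiberwise surjective, so the triple $(\Coker{\bar f}, \bar S, \bar\iota)$ satisfies the hypotheses of \cref{lem:configuration-space-one-to-one}.

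By \cref{lem:configuration-space-one-to-one} this triple is classified by a continuous map $g \colon X \to \Conf_{V,\disk}^r$, and the pointwise identifications above show $g(x) = [A(f_x), \iota(f_x)] = \conf_{\disk}(f_x) = \conf(f)(x)$, i.e.\ $g = \conf(f)$. The pullback description then yields $\Coker{\bar f} \cong g^*\mathcal{E}_{V,\disk}^r = \bundle_X(V,f)$, naturally in $f$. The main technical point is the first step: checking that a continuous family of injective Fredholm operators with constant cokernel dimension really does assemble its cokernels into a locally trivial vector bundle; once that is granted, everything else is a direct transcription of the fiberwise identification of \cref{lem:compare2}.
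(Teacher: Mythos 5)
Your proof is correct but takes a genuinely different route from the paper's. The paper's argument is shorter and sidesteps the technical point you flag as the crux of yours: instead of first establishing that the family of cokernels assembles into a bundle, it builds the surjection $\underline{V}^{\oplus r} \hookrightarrow \underline{l^2(\Zbb_{\geq 0}; V)} \to \Coker{\bar{f}}$ (inclusion of the first $r$ coordinates followed by the quotient), and uses \cref{lem:compare2} to identify its fiberwise kernel with that of the known bundle surjection $\pi(f) \colon \underline{V}^{\oplus r} \to \bundle_X(V, f)$. Since the latter kernel is a subbundle, the quotient $\Coker{\bar{f}}$ is automatically a bundle and canonically isomorphic to $\bundle_X(V, f)$ in one stroke. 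Your route instead equips $\Coker{\bar{f}}$ with a configuration structure $(\bar{S}, \bar{\iota})$, applies \cref{lem:configuration-space-one-to-one}, and identifies the classifying map with $\conf(f)$. This is conceptually cleaner and makes the role of the shift and the first-coordinate inclusion explicit, but it needs the bundle structure on $\Coker{\bar{f}}$ established in advance, which you leave as a cited standard fact about Fredholm families with constant cokernel dimension. That fact is true, but in this setting the paper's surjection argument proves it for free, which is why its proof is both shorter and more self-contained.
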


\begin{proof}
    By \cref{lem:compare2}, for any $ x \in X $, $ (\Coker{\bar{f}})_x \cong \bundle_X(V, f)_x $. 
    Since 
    \[
        \pi(f) \colon \underline{V}^{\oplus r} \to \bundle_X(V, f)
    \]
    is surjective, the composition 
    \[
        \underline{V}^{\oplus r} \hookrightarrow \underline{l^2(\Zbb_{\geq 0}; V)} \to \Coker{\bar{f}}
    \]
    is also surjective. 
    Here, the first inclusion is defined by 
    \[
        V^{\oplus r} \to l^2(\Zbb_{\geq 0}; V) \ ; (v_i)_{i = 0, \dots, r - 1} \mapsto (v_0, \dots, v_{r - 1}, 0, 0, \dots). 
    \]
    Two maps $ \pi(f) \colon \underline{V}^{\oplus r} \to \bundle_X(V, f) $ and $ \underline{V}^{\oplus r} \to \Coker{\bar{f}} $ have same kernels so we have naturally $ \bundle_X(V, f) \cong \Coker{\bar{f}} $. 
\end{proof}

\part{An alternative proof of the bulk-edge correspondence}\label{part:bulkedge}

\section{A toy model for the proof of the bulk-edge correspondence}\label{sec:toymodel}

In this section, we describe the low-dimensional version of the bulk-edge correspondence in \cref{sec:setting} and \ref{sec:proof-of-bulkedge}. 

Let $ V $ be a finite-dimensional complex inner product space. 
Let $ D^1 \coloneqq [-1, 1] $ and $ S^0 \coloneqq \partial D^1 $. 
Let $ \Herm(V) \subset \End(V) $ be the set of Hermite matrices on $ V $. 
Also, let $ \Herm_0(V) \subset \Herm(V) $ be the set of Hermite matrices that do not have 0 as eigenvalue. 
Let $ A \colon (D^1, S^0) \to (\Herm(V), \Herm_0(V)) $ be a continuous path. 
Then, spectral flow $ \sf(A) \in Z $ of $ A $ is defined. 
For example, see \cite{MR1426691}. 
Roughly speaking, spectral flow $ \sf(A) $ is a signed count of the number of times the left-to-right flow of eigenvalues intersects 0.

\begin{example}\label{ex:spectral-flow}
    Let $ A \colon (D^1, S^0) \to (\Herm(\Cbb^3), \Herm_0(\Cbb^3)) $ be defined by 
    \[
        A_t \coloneqq 
        \begin{pmatrix}
            \frac{1}{2}t + \frac{1}{4} & & \\
            & - \frac{1}{2}t & \\
            & & \frac{1}{4}t^2 + \frac{1}{2} t - \frac{1}{4}
        \end{pmatrix}
    \]
    for $ -1 \leq t \leq 1 $. 
    Then, spectral flow $ \sf(A) $ of $ A $ is 1. 
    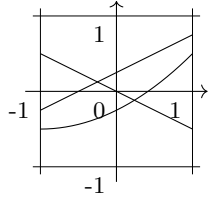
\begin{figure}[ht]
        \centering
        \begin{tikzpicture}[domain=-1:1]
            \draw[very thin] (1.0, -1.1) -- (1.0, 1.1);
            \draw[very thin] (-1.0, -1.1) -- (-1.0, 1.1);
            \draw[very thin] (-1.1, 1.0) -- (1.1, 1.0);
            \draw[very thin] (-1.1, -1.0) -- (1.1, -1.0);
            \draw[->] (-1.2, 0) -- (1.2, 0);
            \draw[->] (0, -1.2) -- (0, 1.2);
            \draw plot (\x, \x * 0.5 + 0.25);
            \draw plot (\x, - \x * 0.5);
            \draw plot (\x, \x * \x * 0.25 + \x * 0.5 - 0.25);
            \draw (0,0) node[below left] {{\scriptsize 0}};
            \draw (0,1) node[below left] {{\scriptsize 1}};
            \draw (0,-1) node[below left] {{\scriptsize -1}};
            \draw (1,0) node[below left] {{\scriptsize 1}};
            \draw (-1,0) node[below left] {{\scriptsize -1}};
        \end{tikzpicture}
        \caption{The graph of eigenvalues of $ \{A_t\}_t $ in \cref{ex:spectral-flow}.}\label{fig:tikz-graph}
    \end{figure}
\end{example}

Since the operators we are dealing with are finite-dimensional, we immediately know the following. 

\begin{lemma}
    Let $ A \colon (D^1, S^0) \to (\Herm(V), \Herm_0(V)) $ be a continuous path. 
    Then, we have
    \[
        \sf(A) = \rank_{\Cbb}(\chi_{[0, \infty)}(A_1)) - \rank_{\Cbb}(\chi_{[0, \infty)}(A_{-1})). 
    \]
    Here, $ \chi_{[0, \infty)} $ is the characteristic function on $ [0, \infty) $. 
\end{lemma}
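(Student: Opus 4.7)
The plan is to show that both sides of the claimed identity are invariants of the homotopy class of $A$ relative to the endpoints, and then to evaluate them on a sufficiently generic representative. For the right-hand side this is immediate: $\chi_{[0,\infty)}(\textendash)$ is locally constant on $\Herm_0(V)$, so $\rank_{\Cbb}\chi_{[0,\infty)}(A_{\pm 1})$ depends only on the path-component of $A_{\pm 1}$ in $\Herm_0(V)$. For the left-hand side, spectral flow is homotopy invariant rel endpoints by definition. Both sides are moreover additive under concatenation of subpaths.

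Next I would invoke standard perturbation theory of Hermitian matrices (Kato--Rellich, combined with a transversality argument, or just the implicit function theorem applied to simple eigenvalues) to reduce to a generic path $A$ whose zero crossings are simple and transverse: the set $\{t \in (-1, 1) \mid 0 \in \sigma(A_t)\}$ is a finite set $\{t_1 < \cdots < t_N\}$, at each of which exactly one simple eigenvalue of $A_t$ passes through $0$ with nonzero derivative. Such paths form a dense subset of the space of continuous paths $(D^1, S^0) \to (\Herm(V), \Herm_0(V))$ with prescribed endpoints, so by the invariance and additivity above it is enough to prove the identity for such a generic $A$.

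For such a generic $A$, set $n(t) \coloneqq \rank_{\Cbb} \chi_{[0,\infty)}(A_t)$. On each connected component of $[-1,1] \setminus \{t_1, \dots, t_N\}$ the spectrum of $A_t$ avoids $0$, so $n$ is locally constant there. At each $t_i$ the sign of the crossing is $\epsilon_i = +1$ if the crossing eigenvalue moves from negative to positive as $t$ increases through $t_i$, and $\epsilon_i = -1$ otherwise; by the definition of spectral flow this is the contribution of $t_i$ to $\sf(A)$, and by direct inspection it is also the jump of $n$ at $t_i$. Telescoping then gives
\[
n(1) - n(-1) = \sum_{i=1}^{N} \epsilon_i = \sf(A),
\]
which is the claimed identity.

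The only nontrivial step is the genericity reduction; the remaining computation for generic paths is immediate from the definitions. An alternative route, avoiding smoothness altogether, is to use a partition $-1 = s_0 < s_1 < \cdots < s_k = 1$ together with gap values $a_j \notin \sigma(A_t)$ for $t \in [s_j, s_{j+1}]$ (such partitions exist by compactness of $[-1,1]$ and continuity of the spectrum), on which $\rank_{\Cbb}\chi_{[a_j,\infty)}(A_t)$ is constant, and to telescope the resulting ranks; this reduces the identity to a direct algebraic comparison without any transversality.
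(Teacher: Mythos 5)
Your argument is correct, but note that the paper offers no proof of this lemma at all: it is presented as immediate in finite dimensions, with the surrounding text pointing to the standard partition/gap-value formulation of spectral flow. Your ``alternative route'' at the end is in fact the argument closest to what the paper implicitly has in mind, and it is the cleaner of your two: in finite dimensions the total number of eigenvalues is the constant $\dim_\Cbb V$, the rank $\rank_{\Cbb}\chi_{[a_j,\infty)}(A_t)$ is constant on each subinterval of a suitable partition, and the telescoping sum collapses to $\rank_{\Cbb}\chi_{[0,\infty)}(A_1)-\rank_{\Cbb}\chi_{[0,\infty)}(A_{-1})$ with no genericity needed. Your main route via simple transverse crossings is also valid, but it is heavier than the problem demands, and the smoothing-plus-perturbation step it rests on needs some care (a merely continuous path can meet $\{0\in\sigma(A_t)\}$ in a non-discrete set, so density of generic paths and local constancy of both sides along the approximation both have to be argued), whereas the partition approach applies to arbitrary continuous paths directly. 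One further simplification you could exploit: since $\Herm(V)$ is convex, the straight-line homotopy shows that any two paths sharing the same endpoints in $\Herm_0(V)$ are homotopic rel endpoints through admissible paths, so homotopy invariance of $\sf$ already forces $\sf(A)$ to depend only on $A_{\pm1}$; the lemma is then exactly the explicit formula that the telescoping produces.
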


In the following, we describe a low-dimensional version of the proof of the bulk-edge correspondence. 
Let $ H \colon S^0 \times S^0 \to \Herm_0(V) $ be a continuous map. 
The map $ H $ corresponds to the continuous family of invertible matrices
\[
    \{\hat{H}_x(z) - \lambda \in \GL_{\Cbb}(E_x)\}_{x \in X, z \in S^1, \lambda \in \gamma}
\]
in the proof of \cref{thm:bulkedge}. 
Let $ H_1 \colon S^0 \times D^1 \to \Herm(V) $ and $ H_2 \colon D^1 \times S^0 \to \Herm(V) $ are extentions of $ H $, respectively. 

\begin{definition}
    Let the first index $ \ind_1(H) $ be spectral flow of $ H_1 $: 
    \[
        \ind_1(H) \coloneqq \sf(H_1) = \sf(\rest{H_1}{\{1\} \times D^1}) - \sf(\rest{H_1}{\{-1\} \times D^1})
    \]
    Also, let the second index $ \ind_2(H) $ be spectral flow of $ H_2 $: 
    \[
        \ind_2(H) \coloneqq \sf(H_2) = \sf(\rest{H_2}{D^1 \times \{1\}}) - \sf(\rest{H_2}{D^1 \times \{-1\}})
    \]
\end{definition}

The first index $ \ind_1(H) $ corresponds to the bulk index $ \ind^b(E, H) $ in \cref{sec:setting}, and the second index $ \ind_2(H) $ corresponds to the edge index $ \ind^e(E, H) $. 

\begin{figure}[ht]
    \centering
    \tikzset{every picture/.style={line width=0.75pt}} 

\begin{tikzpicture}[x=0.75pt,y=0.75pt,yscale=-1,xscale=1]

\draw    (60,142) -- (239,142) ;
\draw [shift={(241,142)}, rotate = 180] [color={rgb, 255:red, 0; green, 0; blue, 0 }  ][line width=0.75]    (10.93,-3.29) .. controls (6.95,-1.4) and (3.31,-0.3) .. (0,0) .. controls (3.31,0.3) and (6.95,1.4) .. (10.93,3.29)   ;
\draw    (150,224) -- (150,60) ;
\draw [shift={(150,58)}, rotate = 90] [color={rgb, 255:red, 0; green, 0; blue, 0 }  ][line width=0.75]    (10.93,-3.29) .. controls (6.95,-1.4) and (3.31,-0.3) .. (0,0) .. controls (3.31,0.3) and (6.95,1.4) .. (10.93,3.29)   ;
\draw    (103,174) -- (196.34,111.12) ;
\draw [shift={(198,110)}, rotate = 146.03] [color={rgb, 255:red, 0; green, 0; blue, 0 }  ][line width=0.75]    (10.93,-3.29) .. controls (6.95,-1.4) and (3.31,-0.3) .. (0,0) .. controls (3.31,0.3) and (6.95,1.4) .. (10.93,3.29)   ;

\draw  [dash pattern={on 3pt off 3pt}]  (120,132) -- (212,132) ;
\draw  [dash pattern={on 3pt off 3pt}]  (91,152) -- (183,152) ;
\draw  [dash pattern={on 3pt off 3pt}]  (91,152) -- (120,132) ;
\draw  [dash pattern={on 3pt off 3pt}]  (183,152) -- (212,132) ;

\draw  [dash pattern={on 0.84pt off 2.51pt}]  (91,82) -- (91,223) ;
\draw  [dash pattern={on 0.84pt off 2.51pt}]  (121,62) -- (121,203) ;
\draw  [dash pattern={on 0.84pt off 2.51pt}]  (211,62) -- (211,203) ;
\draw  [dash pattern={on 0.84pt off 2.51pt}]  (183,82) -- (183,223) ;

\draw [fill] (91,119)  circle [radius=3];
\draw [fill] (91,192)  circle [radius=3];
\draw [fill] (121,106) circle [radius=3];
\draw [fill] (121,83)  circle [radius=3];
\draw [fill] (211,166) circle [radius=3];
\draw [fill] (211,112) circle [radius=3];
\draw [fill] (183,106) circle [radius=3];
\draw [fill] (183,207) circle [radius=3];

\draw (194,146) node [anchor=north west][inner sep=0.75pt]   [align=left] {1};
\draw (158,116) node [anchor=north west][inner sep=0.75pt]   [align=left] {1};
\draw (133,155) node [anchor=north west][inner sep=0.75pt]   [align=left] {-1};
\draw (93,126) node [anchor=north west][inner sep=0.75pt]   [align=left] {-1};
\draw (154,61) node [anchor=north west][inner sep=0.75pt]   [align=left] {$ \Rbb $};

\end{tikzpicture}
    \caption{An example of spectrum of $ H $. }
\end{figure}

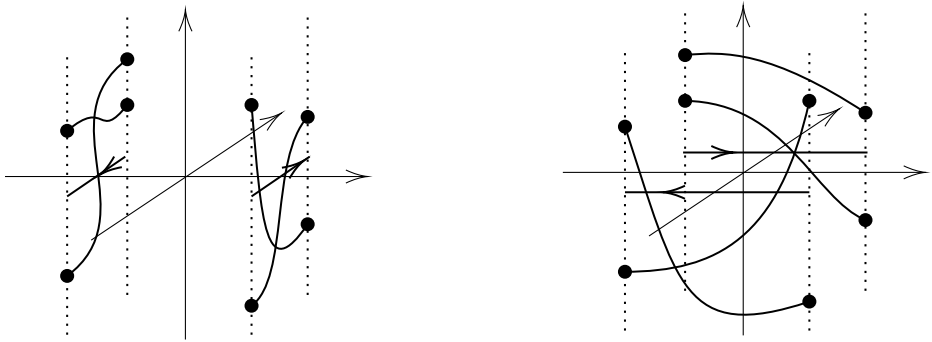
\begin{figure}[ht]
    \begin{minipage}{0.48\columnwidth}
        \centering
        \tikzset{every picture/.style={line width=0.75pt}} 

\begin{tikzpicture}[x=0.75pt,y=0.75pt,yscale=-1,xscale=1]

\draw  [thin]  (60,142) -- (239,142) ;
\draw [shift={(241,142)}, rotate = 180] [color={rgb, 255:red, 0; green, 0; blue, 0 }  ][thin]    (10.93,-3.29) .. controls (6.95,-1.4) and (3.31,-0.3) .. (0,0) .. controls (3.31,0.3) and (6.95,1.4) .. (10.93,3.29)   ;
\draw  [thin]  (150,224) -- (150,60) ;
\draw [shift={(150,58)}, rotate = 90] [color={rgb, 255:red, 0; green, 0; blue, 0 }  ][thin]    (10.93,-3.29) .. controls (6.95,-1.4) and (3.31,-0.3) .. (0,0) .. controls (3.31,0.3) and (6.95,1.4) .. (10.93,3.29)   ;
\draw  [thin]  (103,174) -- (196.34,111.12) ;
\draw [shift={(198,110)}, rotate = 146.03] [color={rgb, 255:red, 0; green, 0; blue, 0 }  ][thin]    (10.93,-3.29) .. controls (6.95,-1.4) and (3.31,-0.3) .. (0,0) .. controls (3.31,0.3) and (6.95,1.4) .. (10.93,3.29)   ;

\draw    (91,152) -- (120,132) ;
\draw    (183,152) -- (212,132) ;

\draw  [dash pattern={on 0.84pt off 2.51pt}]  (91,82) -- (91,223) ;
\draw  [dash pattern={on 0.84pt off 2.51pt}]  (121,62) -- (121,203) ;
\draw  [dash pattern={on 0.84pt off 2.51pt}]  (211,62) -- (211,203) ;
\draw  [dash pattern={on 0.84pt off 2.51pt}]  (183,82) -- (183,223) ;

\draw [fill] (91,119)  circle [radius=3];
\draw [fill] (91,192)  circle [radius=3];
\draw [fill] (121,106) circle [radius=3];
\draw [fill] (121,83)  circle [radius=3];
\draw [fill] (211,166) circle [radius=3];
\draw [fill] (211,112) circle [radius=3];
\draw [fill] (183,106) circle [radius=3];
\draw [fill] (183,207) circle [radius=3];

\draw    (91,119) .. controls (111,102) and (106,125) .. (121,106) ;
\draw    (91,192) .. controls (131,162) and (81,113) .. (121,83) ;
\draw    (183,106) .. controls (188,167) and (190,197) .. (211,166) ;
\draw    (211,112) .. controls (192,133) and (202,193) .. (183,207) ;

\draw [shift={(107.5,141)}, rotate = 324.25] [color={rgb, 255:red, 0; green, 0; blue, 0 }  ][line width=0.75]    (10.93,-3.29) .. controls (6.95,-1.4) and (3.31,-0.3) .. (0,0) .. controls (3.31,0.3) and (6.95,1.4) .. (10.93,3.29)   ;
\draw [shift={(209,134)}, rotate = 145.01] [color={rgb, 255:red, 0; green, 0; blue, 0 }  ][line width=0.75]    (10.93,-3.29) .. controls (6.95,-1.4) and (3.31,-0.3) .. (0,0) .. controls (3.31,0.3) and (6.95,1.4) .. (10.93,3.29)   ;

\end{tikzpicture}
    \end{minipage}
    \begin{minipage}{0.48\columnwidth}
        \centering
        \tikzset{every picture/.style={line width=0.75pt}} 

\begin{tikzpicture}[x=0.75pt,y=0.75pt,yscale=-1,xscale=1]

\draw  [thin]  (60,142) -- (239,142) ;
\draw [shift={(241,142)}, rotate = 180] [color={rgb, 255:red, 0; green, 0; blue, 0 }  ][thin]    (10.93,-3.29) .. controls (6.95,-1.4) and (3.31,-0.3) .. (0,0) .. controls (3.31,0.3) and (6.95,1.4) .. (10.93,3.29)   ;
\draw  [thin]  (150,224) -- (150,60) ;
\draw [shift={(150,58)}, rotate = 90] [color={rgb, 255:red, 0; green, 0; blue, 0 }  ][thin]    (10.93,-3.29) .. controls (6.95,-1.4) and (3.31,-0.3) .. (0,0) .. controls (3.31,0.3) and (6.95,1.4) .. (10.93,3.29)   ;
\draw  [thin]  (103,174) -- (196.34,111.12) ;
\draw [shift={(198,110)}, rotate = 146.03] [color={rgb, 255:red, 0; green, 0; blue, 0 }  ][thin]    (10.93,-3.29) .. controls (6.95,-1.4) and (3.31,-0.3) .. (0,0) .. controls (3.31,0.3) and (6.95,1.4) .. (10.93,3.29)   ;

\draw    (120,132) -- (212,132) ;
\draw    (91,152) -- (183,152) ;

\draw  [dash pattern={on 0.84pt off 2.51pt}]  (91,82) -- (91,223) ;
\draw  [dash pattern={on 0.84pt off 2.51pt}]  (121,62) -- (121,203) ;
\draw  [dash pattern={on 0.84pt off 2.51pt}]  (211,62) -- (211,203) ;
\draw  [dash pattern={on 0.84pt off 2.51pt}]  (183,82) -- (183,223) ;

\draw [fill] (91,119)  circle [radius=3];
\draw [fill] (91,192)  circle [radius=3];
\draw [fill] (121,106) circle [radius=3];
\draw [fill] (121,83)  circle [radius=3];
\draw [fill] (211,166) circle [radius=3];
\draw [fill] (211,112) circle [radius=3];
\draw [fill] (183,106) circle [radius=3];
\draw [fill] (183,207) circle [radius=3];

\draw    (121,106) .. controls (172,107) and (185,155) .. (211,166) ;
\draw    (121,83) .. controls (141,81) and (164,81) .. (211,112) ;
\draw    (91,119) .. controls (117,199) and (120,228) .. (183,207) ;
\draw    (91,192) .. controls (137,192) and (168,175) .. (183,106) ;

\draw [shift={(145,132)}, rotate = 180] [color={rgb, 255:red, 0; green, 0; blue, 0 }  ][line width=0.75]    (10.93,-3.29) .. controls (6.95,-1.4) and (3.31,-0.3) .. (0,0) .. controls (3.31,0.3) and (6.95,1.4) .. (10.93,3.29)   ;
\draw [shift={(110,152)}, rotate = 360] [color={rgb, 255:red, 0; green, 0; blue, 0 }  ][line width=0.75]    (10.93,-3.29) .. controls (6.95,-1.4) and (3.31,-0.3) .. (0,0) .. controls (3.31,0.3) and (6.95,1.4) .. (10.93,3.29)   ;

\end{tikzpicture}
    \end{minipage}
    \caption{An example of spectrum of $ H_1 $ with $ \ind_1(H) = -1 $ and an example of spectrum of $ H_2 $ with $ \ind_2(H) = -1 $. }
\end{figure}

Then, corresponding to the bulk-edge correspondence (\cref{thm:bulkedge}), the following holds. 

\begin{proposition}
    $ \ind_1(H) = \ind_2(H) $. 
\end{proposition}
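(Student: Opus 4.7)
The plan is to reduce both indices to a common alternating sum over the four corner points of $S^0 \times S^0$, using the endpoint formula for spectral flow established just above the proposition. Write $r_+(B) \coloneqq \rank_{\Cbb}(\chi_{[0, \infty)}(B))$ for $B \in \Herm_0(V)$. That lemma says $\sf(A) = r_+(A_1) - r_+(A_{-1})$ for any continuous path $A \colon (D^1, S^0) \to (\Herm(V), \Herm_0(V))$. In particular the spectral flow depends only on the endpoint values, so $\ind_1(H)$ and $\ind_2(H)$ are independent of the chosen extensions $H_1$ and $H_2$ and are determined entirely by $H \colon S^0 \times S^0 \to \Herm_0(V)$.

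First I would apply the endpoint formula to each of the four boundary paths appearing in the definitions of $\ind_1$ and $\ind_2$:
\begin{align*}
    \sf(\rest{H_1}{\{s\} \times D^1}) &= r_+(H(s, 1)) - r_+(H(s, -1)), \qquad s = \pm 1, \\
    \sf(\rest{H_2}{D^1 \times \{t\}}) &= r_+(H(1, t)) - r_+(H(-1, t)), \qquad t = \pm 1.
\end{align*}
Substituting into the definitions and expanding, both $\ind_1(H)$ and $\ind_2(H)$ collapse to the same signed sum
\[
    \sum_{(s, t) \in S^0 \times S^0} st \cdot r_+(H(s, t)) = r_+(H(1,1)) - r_+(H(1,-1)) - r_+(H(-1,1)) + r_+(H(-1,-1)),
\]
which proves the equality.

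Geometrically this is the low-dimensional shadow of the argument to come in \cref{sec:setting,sec:proof-of-bulkedge}: gluing $H_1$ and $H_2$ along their common restriction $H$ yields a continuous map $\partial(D^1 \times D^1) \approx S^1 \to \Herm(V)$, and the identity $\ind_1(H) = \ind_2(H)$ records the cancellation of boundary contributions along this closed loop. The proof has no genuine analytic obstacle in this toy setting; the only thing to check is that the two iterated differences rearrange into the same symmetric sum, which is immediate once the endpoint formula has eliminated the chosen extensions. The pedagogical value is rather in making visible that the content of the bulk--edge correspondence here is the vanishing of a total boundary count on a closed one-manifold.
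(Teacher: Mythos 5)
Your proof is correct. You expand both indices via the endpoint formula for spectral flow (the lemma immediately preceding the proposition) and check that the two iterated differences collapse to the same alternating sum over the four corners of $S^0 \times S^0$; each application of the lemma is legitimate because the relevant endpoints lie in $\Herm_0(V)$ by hypothesis on $H$. The paper instead glues $H_1$ and $H_2$ into a single loop $\bar{H} \colon S^1 \approx \partial(D^1 \times D^1) \to \Herm(V)$ and concludes $\ind_1(H) - \ind_2(H) = \sf(\bar{H}) = 0$, the point being that spectral flow around a closed loop of finite-dimensional Hermitian matrices vanishes (again by the endpoint formula, since the two endpoints coincide). The two arguments are equivalent in content — yours is the ``unrolled'' computational version, and you correctly identify the gluing picture as the geometric meaning in your closing paragraph. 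What your version buys is an explicit closed formula for both indices and, as a byproduct, the observation that they are independent of the chosen extensions $H_1$, $H_2$ (a well-definedness point the paper leaves implicit); what the paper's version buys is that it mirrors, in miniature, the structure of the proof of \cref{thm:bulkedge}, where the cancellation really does come from gluing two pieces into the boundary of $D^2 \times \bar{D_\gamma}$ rather than from an explicit corner count.
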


\begin{proof}
    Gluing $ H_1 \colon S^0 \times D^1 \to \Herm(V) $ and $ H_2 \colon D^1 \times S^0 \to \Herm(V) $, we obtain
    \[
        \bar{H} \colon S^1 \approx \partial(D^1 \times D^1) \to \Herm(V). 
    \]
    Therefore, 
    \[
        \ind_1(H) - \ind_2(H) = \sf(H_1) - \sf(H_2) = \sf(\bar{H}) = 0. 
    \]
\end{proof}

\begin{figure}[ht]
    \centering
    \tikzset{every picture/.style={line width=0.75pt}} 

\begin{tikzpicture}[x=0.75pt,y=0.75pt,yscale=-1,xscale=1]

\draw  [thin]  (60,142) -- (239,142) ;
\draw [shift={(241,142)}, rotate = 180] [color={rgb, 255:red, 0; green, 0; blue, 0 }  ][thin]    (10.93,-3.29) .. controls (6.95,-1.4) and (3.31,-0.3) .. (0,0) .. controls (3.31,0.3) and (6.95,1.4) .. (10.93,3.29)   ;
\draw  [thin]  (150,224) -- (150,60) ;
\draw [shift={(150,58)}, rotate = 90] [color={rgb, 255:red, 0; green, 0; blue, 0 }  ][thin]    (10.93,-3.29) .. controls (6.95,-1.4) and (3.31,-0.3) .. (0,0) .. controls (3.31,0.3) and (6.95,1.4) .. (10.93,3.29)   ;
\draw  [thin]  (103,174) -- (196.34,111.12) ;
\draw [shift={(198,110)}, rotate = 146.03] [color={rgb, 255:red, 0; green, 0; blue, 0 }  ][thin]    (10.93,-3.29) .. controls (6.95,-1.4) and (3.31,-0.3) .. (0,0) .. controls (3.31,0.3) and (6.95,1.4) .. (10.93,3.29)   ;

\draw  [thin]  (120,132) -- (212,132) ;
\draw  [thin]  (91,152) -- (183,152) ;
\draw  [thin]  (91,152) -- (120,132) ;
\draw  [thin]  (183,152) -- (212,132) ;

\draw  [dash pattern={on 0.84pt off 2.51pt}]  (91,82) -- (91,223) ;
\draw  [dash pattern={on 0.84pt off 2.51pt}]  (121,62) -- (121,203) ;
\draw  [dash pattern={on 0.84pt off 2.51pt}]  (211,62) -- (211,203) ;
\draw  [dash pattern={on 0.84pt off 2.51pt}]  (183,82) -- (183,223) ;

\draw [fill] (91,119)  circle [radius=3];
\draw [fill] (91,192)  circle [radius=3];
\draw [fill] (121,106) circle [radius=3];
\draw [fill] (121,83)  circle [radius=3];
\draw [fill] (211,166) circle [radius=3];
\draw [fill] (211,112) circle [radius=3];
\draw [fill] (183,106) circle [radius=3];
\draw [fill] (183,207) circle [radius=3];

\draw    (91,119) .. controls (111,102) and (106,125) .. (121,106) ;
\draw    (91,192) .. controls (131,162) and (81,113) .. (121,83) ;
\draw    (183,106) .. controls (188,167) and (190,197) .. (211,166) ;
\draw    (211,112) .. controls (192,133) and (202,193) .. (183,207) ;
\draw    (121,106) .. controls (172,107) and (185,155) .. (211,166) ;
\draw    (121,83) .. controls (141,81) and (164,81) .. (211,112) ;
\draw    (91,119) .. controls (117,199) and (120,228) .. (183,207) ;
\draw    (91,192) .. controls (137,192) and (168,175) .. (183,106) ;

\draw [shift={(138,132)}, rotate = 360] [color={rgb, 255:red, 0; green, 0; blue, 0 }  ][thin]    (10.93,-3.29) .. controls (6.95,-1.4) and (3.31,-0.3) .. (0,0) .. controls (3.31,0.3) and (6.95,1.4) .. (10.93,3.29)   ;

\end{tikzpicture}
    \caption{An example of spectrum of $ \bar{H} $. }
\end{figure}

\section{Setup of the bulk-edge correspondence and definition of the indices}\label{sec:setting}

Let $ X $ be a oriented smooth $ n $-dimensional closed manifold, $ E $ a complex vector bundle of rank $ d $ over $ X $. 
Suppose bounded linear operator 
\[
    H_x \colon l^2(\Zbb; E_x) \to l^2(\Zbb; E_x)
\]
is given for each $ x \in X $.
Write $ H \coloneqq \{H_x\}_{x \in X} $. 
We assume that $ H_x $ can be written as $ H_x = \sum_{j = -k}^l a_j(x) S^j $ using $ k, l \in \Zbb_{\geq 0} $, independent $ x \in X $. 
Here, $ S \colon l^2(\Zbb; E_x) \to l^2(\Zbb; E_x) $ is a shift operator satisfying $ S((v_i)_i) = (v_{i - 1})_i $. 
Also, for each $ -k \leq j \leq l $, $ a_j $ is a continuous section $ a_j \colon X \to \End(E) $ of complex vector bundle $ \End(E) \to X $. 
Furthermore, we suppose that a simple closed curve $ \gamma $ in $ \Cbb $ which satisfies that $ \sigma(H_x) \cap \gamma = \emptyset $ where $ \sigma(H_x) $ is the spectrum of $ H_x $ for each $ x \in X $ is given. 

\subsection{Definition of bulk index}

We consider $ \hat{H}_x \colon L^2(S^1; E_x) \to L^2(S^1; E_x) $ corresponding $ H_x $ by Fourier transformation. 
Let $ z $ be a coordinate of $ S^1 $, then we can write $ \hat{H}_x = \sum_{j = -k}^l a_j(x) z^j $. 
We define a subbundle $ E_{\mathrm{bulk}} $ of a complex vector bundle $ \proj_{X}^* E \to X \times S^1 $ as follows:
Considering a projection 
\[
    \frac{1}{2\pi i} \int_{\gamma}(\lambda\cdot\id - \hat{H}_x(z))^{-1}d\lambda
\]
on $ (\proj_{X}^* E)_{x, z} = E_x $ for each $ x \in X $ and $ z \in S^1 $, let $ E_{\mathrm{bulk}} $ be the subbundle of $ \proj_{X}^* E $ defined as their images. 
The fiber of $ E_{\mathrm{bulk}} $ at $ (x, z) \in X \times S^1 $ coincides with the direct sum of generalized eigenspaces corresponding to eigenvalues of $ \hat{H}_x(z) $ inside $ \gamma $. 

\begin{definition}
    We define the bulk index $ \mathrm{ind}^{b}(E, H) $ of $ \{H_x\}_{x\in X} $ as 
    \[
        \mathrm{ind}^b(E, H) \coloneqq - \int_{X \times S^1} ch([E_{\mathrm{bulk}}]).
    \]
\end{definition}

\subsection{Definition of edge index}

Let $ P_{\geq 0} \colon l^2(\Zbb; E_x) \to l^2(\Zbb_{\geq 0}; E_x) $ be the projection. 
The adjoint operator $ P_{\geq 0}^* \colon l^2(\Zbb_{\geq 0}; E_x) \to l^2(\Zbb; E_x) $ of $ P_{\geq 0} $ is given by 
\[
    P_{\geq 0}^*((v_i)_i) \coloneqq (w_i)_i, \quad \text{where} \quad w_i =
    \begin{cases}
        0 & \text{if } i < 0 \\
        v_i & \text{if } i \geq 0 
    \end{cases}. 
\]
For each $ x \in X $, $ \lambda \in \gamma $, we define 
\[
    (H_x - \lambda)^\# \coloneqq P_{\geq 0}(H_x - \lambda)P_{\geq 0}^* \colon l^2(\Zbb_{\geq 0}; E_x) \to l^2(\Zbb_{\geq 0}; E_x). 
\]

\begin{lemma}
    A family $ \{(H_x - \lambda)^\#\}_{x, \lambda} $ is a continuous family of Fredholm operators.  
\end{lemma}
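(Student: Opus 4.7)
The plan is to establish Fredholmness of each $(H_x - \lambda)^\#$ and norm-continuity of the family separately, using a single parametrix for both purposes. The key mechanism is that $H_x - \lambda$ is a finite Laurent polynomial in the shift $S$, so its compression to $\ell^2(\Zbb_{\geq 0}; E_x)$ fails to be multiplicative only by finite-rank errors.

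Because $\sigma(H_x) \cap \gamma = \emptyset$, the operator $H_x - \lambda$ is invertible on $\ell^2(\Zbb; E_x)$, so I would define the parametrix
\[
    Q_{x, \lambda} \coloneqq P_{\geq 0}(H_x - \lambda)^{-1}P_{\geq 0}^*.
\]
Writing $\Pi_+ \coloneqq P_{\geq 0}^* P_{\geq 0}$, $\Pi_- \coloneqq I - \Pi_+$, and using $P_{\geq 0}P_{\geq 0}^* = I$, a direct computation gives
\begin{align*}
    (H_x - \lambda)^\# \, Q_{x, \lambda} & = I - P_{\geq 0}(H_x - \lambda)\Pi_-(H_x - \lambda)^{-1}P_{\geq 0}^*, \\
    Q_{x, \lambda} \, (H_x - \lambda)^\# & = I - P_{\geq 0}(H_x - \lambda)^{-1}\Pi_-(H_x - \lambda)P_{\geq 0}^*.
\end{align*}
The factors $P_{\geq 0} S^j \Pi_-$ and $\Pi_- S^j P_{\geq 0}^*$ are finite-rank (of rank bounded by $|j| \dim E_x$) and vanish outside $1 \leq j \leq l$ and $-k \leq j \leq -1$ respectively, so the compressions $P_{\geq 0}(H_x - \lambda) \Pi_-$ and $\Pi_-(H_x - \lambda) P_{\geq 0}^*$ are finite-rank sums. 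Hence both remainders above are finite-rank, and $(H_x - \lambda)^\#$ is Fredholm.

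For continuity, the coefficients $a_j \colon X \to \End(E)$ are continuous and $S$ is a fixed unitary, so $(x, \lambda) \mapsto H_x - \lambda$ is norm-continuous over $X \times \gamma$. Norm-continuity of inversion on the invertible set propagates this to $(x, \lambda) \mapsto (H_x - \lambda)^{-1}$, and sandwiching by the fixed partial isometries $P_{\geq 0}$ and $P_{\geq 0}^*$ yields norm-continuity of $(H_x - \lambda)^\#$. The main subtlety I foresee is bookkeeping rather than analytic: $E_x$ varies with $x$, so the family is really a section of the Fredholm-operator bundle associated to the Hilbert bundle $\ell^2(\Zbb_{\geq 0}; E) \to X \times \gamma$; a local trivialization of $E$ reduces everything to operators on a fixed model Hilbert space $\ell^2(\Zbb_{\geq 0}; V)$, after which the preceding computations apply verbatim.
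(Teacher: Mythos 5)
Your proof is correct, but you take a genuinely different route from the paper. The paper writes $l^2(\Zbb; E_x) = l^2(\Zbb_{\geq 0}; E_x) \oplus l^2(\Zbb_{<0}; E_x)$, observes that $H_x - \lambda$ differs from the block-diagonal operator $(H_x - \lambda)^\# \oplus P_{<0}(H_x - \lambda)P_{<0}^*$ by the two off-diagonal blocks (which are finite rank because $H_x - \lambda$ is a finite Laurent polynomial in $S$), invokes stability of Fredholmness under finite-rank perturbation to conclude the block-diagonal operator is Fredholm, and finally passes to the direct summand $(H_x - \lambda)^\#$. You instead construct an explicit parametrix $Q_{x,\lambda} = P_{\geq 0}(H_x-\lambda)^{-1}P_{\geq 0}^*$ and verify directly that $(H_x-\lambda)^\# Q_{x,\lambda}$ and $Q_{x,\lambda}(H_x-\lambda)^\#$ are $I$ plus finite-rank. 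Notice that the finite-rank operators you isolate, $P_{\geq 0}(H_x-\lambda)\Pi_-$ and $\Pi_-(H_x-\lambda)P_{\geq 0}^*$, are exactly the off-diagonal blocks in the paper's decomposition, so the two arguments exploit the same underlying locality of the Laurent polynomial; they differ only in how they package it. Your approach has the minor advantage of being self-contained (it does not invoke the ``direct summand of a Fredholm operator is Fredholm'' step, which requires a small argument of its own), and it also handles the continuity claim more carefully than the paper's proof, which only establishes Fredholmness and leaves norm-continuity of the family implicit; your remarks on norm-continuity of inversion and on trivializing the bundle $l^2(\Zbb_{\geq 0}; E) \to X \times \gamma$ fill that gap cleanly.
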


\begin{proof}
    For $ x \in X $ and $ \lambda \in \gamma $, $ H_x - \lambda $ is invertible, in paticular, it is Fredholm. 
    The difference 
    \[
        (H_x - \lambda) - (P_{\geq 0}(H_x - \lambda)P_{\geq 0}^* \oplus P_{< 0}(H_x - \lambda)P_{< 0}^*)
    \]
    is finite rank operator, in paticular, 
    \[
        P_{\geq 0}(H_x - \lambda)P_{\geq 0}^* \oplus P_{< 0}(H_x - \lambda)P_{< 0}^*
    \]
    is Fredholm, and $ (H_x - \lambda)^\# = P_{\geq 0}(H_x - \lambda)P_{\geq 0}^* $ is also Fredholm because it is a direct summand. 
\end{proof}

\begin{definition}
    Let $ \mathrm{ind}H^\# \coloneqq \operatorname{ind}(\{(H_x - \lambda)^\#\}_{x \in X, \lambda \in \gamma}) \in K(X \times \gamma) $. 
    We define the edge index $ \mathrm{ind}^e(E, H) $ of $ \{H_x\}_{x\in X} $ as 
    \[
        \mathrm{ind}^e(E, H) \coloneqq - \int_{X \times \gamma} ch(\mathrm{ind}H^\#). 
    \]
\end{definition}

\section{Proof of the bulk-edge correspondence}\label{sec:proof-of-bulkedge}

\begin{theorem}\label{thm:bulkedge}
    Under the setup in \cref{sec:setting}, 
    \[
        \mathrm{ind}^b(E, H) = \mathrm{ind}^e(E, H). 
    \]
\end{theorem}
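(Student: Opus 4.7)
The plan is to mirror the toy-model argument of \cref{sec:toymodel}: glue the bulk construction over $X \times S^1 \times \bar{D_\gamma}$ and the edge construction over $X \times D^2 \times \gamma$ along their common boundary $X \times S^1 \times \gamma$ into a single $K$-theoretic datum on the closed manifold $X \times S^3$ via the Heegaard decomposition $S^3 \cong (S^1 \times \bar{D_\gamma}) \cup_{S^1 \times \gamma} (D^2 \times \gamma)$, and conclude that the two halves contribute $\mathrm{ind}^b$ and $\mathrm{ind}^e$ to a total which must vanish by closedness.

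First I would re-express both indices in terms of the $\bundle$-construction of Part I. For the edge side, $\{(H_x - \lambda)^\#\}_{(x,\lambda) \in X \times \gamma}$ is a family of Toeplitz operators with symbol the Laurent polynomial $\hat{H}_x(z) - \lambda$, invertible on $S^1$; multiplying the symbol by $z^k$ turns it into an $\End(E)$-coefficient polynomial of degree $k+l$, and by the Atiyah-style comparison of \cref{subsec:atiyah} its cokernel bundle is canonically $\bundle_{X \times \gamma}(E, z^k(H - \lambda))$. This identifies $\mathrm{ind}\, H^\# \in K(X \times \gamma)$ with the output of the inverse Bott map $\alpha$ of \cref{sec:proof-of-bott-periodicity} applied to $[\proj_X^*E, \proj_X^*E, \hat{H} - \lambda] \in K(X \times \gamma \times (D^2, S^1))$. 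For the bulk side, the family $\lambda - \hat{H}_x(z)$ on $X \times S^1 \times \bar{D_\gamma}$ is invertible on the boundary $X \times S^1 \times \gamma$, so it defines a triple $[\proj_X^*E, \proj_X^*E, \lambda - \hat{H}]$ in $K(X \times S^1 \times (\bar{D_\gamma}, \gamma))$; the residue/spectral-projection formula of \cref{sec:setting} identifies its image under the Bott inverse along $\bar{D_\gamma}$ as $-[E_{\mathrm{bulk}}] \in K(X \times S^1)$.

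Next I would glue these two relative $K$-classes. Restricted to the overlap $X \times S^1 \times \gamma$, both are represented by the invertible family $\hat{H}_x(z) - \lambda$ up to the explicit invertible factor $z^k \cdot \id$, which canonically bounds in each of the two halves since $z$ is a unit on $S^1$ and the corresponding $K$-class extends trivially across both $D^2$ and $\bar{D_\gamma}$. They therefore patch into a single $K$-class $\Xi$ on $X \times (S^1 \times \bar{D_\gamma}) \cup_{X \times S^1 \times \gamma} X \times (D^2 \times \gamma) \cong X \times S^3$. Applying the inverse Bott map on each half and integrating Chern characters then decomposes the total Chern-number pairing of $\Xi$ on $X \times S^3$ into the two contributions $-\mathrm{ind}^b(E, H)$ and $+\mathrm{ind}^e(E, H)$. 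Because $X \times S^3$ is closed and $\Xi$ assembles with trivial clutching across the overlap, this total vanishes, yielding the bulk-edge equality.

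The hard part will be the gluing and sign bookkeeping of Step 3. The bulk and edge extensions are built by genuinely different devices --- finite-dimensional spectral projection in $\bar{D_\gamma}$ on one side, and Toeplitz cokernels via Part I's Bott inverse in $D^2$ on the other --- so verifying that they patch into a single well-defined $K$-class on $X \times S^3$ and that the resulting Chern-character decomposition returns exactly $\mathrm{ind}^b - \mathrm{ind}^e$ with the correct signs demands the full compatibility between the Atiyah--Bott- and Atiyah-style constructions established in \cref{sec:relation}. Once that compatibility is in hand, the concluding vanishing is the direct family-index analogue of the closed-loop spectral-flow identity in the toy model.
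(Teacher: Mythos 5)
Your proposal is correct and follows essentially the same route as the paper: identify $-[E_{\mathrm{bulk}}]$ and $\mathrm{ind}H^\#$ as the inverse Bott images of the relative classes $u$ and $v$ via the comparisons of \cref{sec:relation}, glue along $X \times S^1 \times \gamma$ into a single class on $X \times (T, S^1 \times \gamma)$ with $T = \partial(D^2 \times \bar{D_\gamma}) \approx S^3$, and conclude by integrating the Chern character of the glued class, which vanishes because its image in $K(X \times T)$ is $[\proj_X^*E] - [\proj_X^*E] = 0$. The gluing you flag as the hard part is in fact immediate, since both relative classes are represented by the \emph{same} clutching function $\hat{H}_x(z) - \lambda$ on the overlap (the factor $z^k$ enters only inside the definition of the inverse map $\alpha$, not in the relative classes themselves).
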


\begin{proof}
    Let $ \bar{D_\gamma} $ be union of the bounded one of two connected components of $ \Cbb \setminus \gamma $ and $ \gamma $. 
    Let
    \begin{align*}
        \beta_1 &\colon K(X \times S^1) \to K(X \times S^1 \times (\bar{D_\gamma}, \gamma)) \text{ and} \\
        \beta_2 &\colon K(X \times \gamma) \to K(X \times (D^2, S^1) \times \gamma)
    \end{align*}
    be the Bott map (\cref{sec:strategy}), respectively. 
    Replacing $ (\bar{D_\gamma}, \gamma) $ with $ (D^2, S^1) $ by a biholomorphic map, $ \beta_1 $ is defined. 
    For each $ x \in X $, $ z \in S^1 $ and $ \lambda \in \gamma $, $ \hat{H}_x(z) - \lambda \in \GL_{\Cbb}(E_x) $. 
    Thus, 
    \begin{align*}
        [\proj_X^* E, \proj_X^* E, \{\hat{H}_x(z) - \lambda\}_{x, z, \lambda}] &\in K(X \times S^1 \times (\bar{D_\gamma}, \gamma)) \text{ and} \\
        [\proj_X^* E, \proj_X^* E, \{\hat{H}_x(z) - \lambda\}_{x, z, \lambda}] &\in K(X \times (D^2, S^1) \times \gamma). 
    \end{align*}
    Let $ u $ and $ v $ denote them, respectively. 
    By \cref{subsec:atiyah-bott}, \ref{subsec:atiyah}, we have
    \begin{align*}
        \beta_1^{-1}(u) &= - [E_{\mathrm{bulk}}] \in K(X \times S^1) \text{ and} \\
        \beta_2^{-1}(v) &= \mathrm{ind}H^\# \in K(X \times \gamma)
    \end{align*}
    Thus, we have
    \begin{align*}
        \mathrm{ind}^b(E, H) - \mathrm{ind}^e(E, H) &= - \int_{X \times S^1} ch([E_{\mathrm{bulk}}]) + \int_{X \times \gamma} ch(\mathrm{ind}H^\#) \\
        &= \int_X \left(\int_{S^1} ch(\beta_1^{-1}(u)) + \int_{\gamma} ch(\beta_2^{-1}(v))\right) \\
        &= \int_X \left(\int_{S^1} \int_{\bar{D_\gamma}} ch(u) + \int_{\gamma} \int_{D^2} ch(v)\right). 
    \end{align*}
    Let $ T \coloneqq S^1 \times \bar{D_\gamma} \cup_{S^1 \times \gamma} D^2 \times \gamma = \partial(D^2 \times \bar{D_\gamma}) $, we have 
    \[
        K(X \times S^1 \times (\bar{D_\gamma}, \gamma)) \oplus K(X \times (D^2, S^1) \times \gamma) \cong K(X \times (T, S^1 \times \gamma)). 
    \]
    The element corresponding to $ (u, v) \in K(X \times S^1 \times (\bar{D_\gamma}, \gamma)) \oplus K(X \times (D^2, S^1) \times \gamma) $ is $ [\proj_X^* E, \proj_X^* E, \{\hat{H}_x(z) - \lambda\}_{x, z, \lambda}] \in K(X \times (T, S^1 \times \gamma)) $. 
    Therefore, 
    \begin{align*}
        & \int_X \left(\int_{S^1} \int_{\bar{D_\gamma}} ch(u) + \int_{\gamma} \int_{D^2} ch(v)\right) \\
        &= \int_X \int_T ch([\proj_X^* E, \proj_X^* E, \{\hat{H}_x(z) - \lambda\}_{x, z, \lambda}]) \\
        &= \int_X \int_T ch([\proj_X^* E] - [\proj_X^* E]) = 0,
    \end{align*}
    and $ \mathrm{ind}^b(E, H) = \mathrm{ind}^e(E, H) $ holds. 
    We use the following commutative diagram at the second equality:
    \[
        \begin{tikzcd}
            K(X \times (T, S^1 \times \gamma)) \arrow{r}{ch} \arrow{d} & H^{2*}(X \times (T, S^1 \times \gamma); \Rbb) \arrow{d} \\
            K(X \times T) \arrow{r}{ch} & H^{2*}(X \times T; \Rbb)
        \end{tikzcd}
    \]
\end{proof}

\bibliographystyle{alpha}
\bibliography{bott-periodicity}

\end{document}